\documentclass[pdflatex]{sn-jnl}
\usepackage{amsfonts,amsmath,amssymb}
\usepackage{dsfont}
\usepackage{xcolor}
\usepackage{amsthm}
\usepackage{siunitx}
\usepackage[T1]{fontenc}
\usepackage{lmodern}

%%&&&&&& A4 paper size
 % \marginparwidth 20pt
 % \oddsidemargin 5pt
 % \evensidemargin 5pt
 % \marginparsep 10pt
 % \topmargin -12 true mm
 % \textwidth 16 true cm
 % \textheight 23 true cm
%%%%%%%%%%%%%%%%%%%%%%

\newcommand{\CC}{\mathbb C}

\newtheorem{theorem}{Theorem}[section]
\newtheorem{definition}[theorem]{Definition}

\newtheorem{lemma}[theorem]{Lemma}

\newtheorem*{remark*}{Remark}
\newtheorem{example}[theorem]{Example}

% Use for comments

% Haoze's math command
\newcommand{\Prob}{\mathrm{Prob}}

\newcommand{\Ocal}[1]{\mathcal{O}(\epsilon^{#1})}

% Math notation

%Sphere in d-dimension

%\DeclareMathOperator{\Prob}{Pr}
\DeclareMathOperator{\expct}{{\mathbb E}}

%algorithm
\usepackage[algo2e]{algorithm2e}
\usepackage{algorithmic}
\usepackage{algorithm, caption}

% bold greek symbols

\newcommand{\blambda}{{\boldsymbol \lambda}}
\newcommand{\bI}{{\boldsymbol I}}

\begin{document}

\title{Randomized methods for computing joint eigenvalues, with applications 
to multiparameter eigenvalue problems and root finding}
\author[1]{Haoze He}\email{haoze.he@epfl.ch}
\author[1]{Daniel Kressner}\email{daniel.kressner@epfl.ch}
\author*[2]{Bor Plestenjak}\email{bor.plestenjak@fmf.uni-lj.si}

\affil[1]{\orgdiv{\'Ecole Polytechnique F\'ed\'erale de Lausanne (EPFL)}, \orgname{Institute of Mathematics}, \orgaddress{\city{1015 Lausanne}, \country{Switzerland}}}
\affil*[2]{\orgdiv{IMFM and Faculty of Mathematics and Physics}, \orgname{University of Ljubljana}, \orgaddress{\street{Jadranska 19}, \city{1000 Ljubljana}, \country{Slovenia}}}

\abstract{It is well known that a family of $n\times n$ commuting matrices 
can be simultaneously triangularized by a unitary similarity transformation.
The diagonal entries of the triangular matrices define the $n$ joint eigenvalues of the family. In this work, we consider the task of numerically computing approximations to such 
joint eigenvalues for a family of (nearly) commuting matrices.
This task arises, for example, in solvers for multiparameter eigenvalue
problems and systems of multivariate polynomials, which are our main motivations. 
We propose and analyze a simple approach that computes eigenvalues as
one-sided or two-sided Rayleigh quotients from eigenvectors of a random linear combination
of the matrices in the family. 
We provide some analysis and numerous numerical examples, showing that such randomized approaches can compute semisimple joint eigenvalues accurately and lead to improved performance of existing solvers.}

\keywords{commuting matrices, joint eigenvalue, Rayleigh quotient, randomized numerical linear algebra, multiparameter eigenvalue problem, polynomial system}

\pacs[MSC Classification]{65F15, 15A27, 68W20, 15A69, 65H04}

\pacs[Funding]{The work of the first and second author was supported by the Swiss National Science Foundation 
research project \emph{Probabilistic methods for joint and
 singular eigenvalue problems}, grant number: 200021L\_192049. The work of the third author was supported by the Slovenian Research and Innovation Agency (grants N1-0154 and P1-0194).}

\maketitle

% No parts in separate files, Springer wants everything in one file. Also no figures in separate folders.

%%%%%%%%%%%%%%%%%%%%%%%%%%%%%%%%%%%%%%%%%%%%%%%%%%%%%%%%%%%%%%%%%%%%%%%%%%%%%%%%%%%%%%%%%%%
\section{Introduction}\label{sec:intro}

Let ${\cal A}=\{A_1,\ldots,A_d\}$ be a commuting family of $n\times n$ complex matrices, i.e.,
$A_jA_k=A_kA_j$ for all $1\le j,k\le d$. It is well known that there
exists a unitary matrix $U$ such that all matrices $U^*A_1U,\ldots,U^*A_dU$ are upper triangular, 
see, e.g., \cite[Theorem 2.3.3]{horn13}. The $n$ $d$-tuples 
containing the diagonal elements of $U^*A_1U,\ldots,U^*A_dU$ are called the \emph{joint eigenvalues} of ${\cal A}$. 
In other words, if we partition $U=[u_1\ \cdots\ u_n]$, then 
\begin{equation}\label{eq:joint_eigenvalue_schur}
    \blambda^{(i)}=(\lambda_1^{(i)},\ldots,\lambda_d^{(i)})=(u_i^*A_1u_i,\ldots,u_i^*A_du_i), \qquad i=1,\ldots,n, 
\end{equation}
are the joint eigenvalues. For every joint eigenvalue $\blambda=(\lambda_1,\ldots,\lambda_d)$ of ${\cal A}$ 
there exists a nonzero 
\emph{common eigenvector} $x\in\CC^n$, such that $A_ix=\lambda_i x$ for $i=1,\ldots,d$.

The task of computing joint eigenvalues of a commuting family arises in a variety of applications. Our main motivation are 
numerical methods for multiparameter eigenvalue problems as well as multivariate root finding problems.

In this work, we will investigate solvers for joint eigenvalue problems that utilize a random linear combination 
\begin{equation}\label{eq:Amu}
A(\mu)=\mu_1 A_1 + \mu_2 A_2 +  \cdots + \mu_d A_d,
\end{equation}
where $\mu=[\mu_1\ \cdots\ \mu_d]^T$ is a random vector from the uniform distribution on the unit sphere in $\mathbb C^d$. Assuming that 
$A(\mu)$ can be diagonalized, we use its eigenvector matrix $X$ to diagonalize $A_1,\ldots,A_d$. More precisely, we extract
the joint eigenvalues $\blambda^{(i)}=(\lambda_1^{(i)},\ldots,\lambda_d^{(i)})$ from the diagonal elements of $X^{-1}A_1X,\ldots,X^{-1}A_dX$.

The idea of using a random linear combination like~\eqref{eq:Amu} is not new and has been used several times. For example, in~\cite{CorlessGianniTrager97, ManochaDemmel, multipareig_2023, VermeerschDeMoorLAA} the unitary transformation $U$ from the Schur decomposition $A(\mu)=U^*RU$ is used to simultaneously transform all matrices from ${\cal A}$ to
\emph{block} upper triangular form. Using the Schur decomposition, however, inevitably requires eigenvalue clustering, that is, the Schur decomposition of $A(\mu)$ needs to be reordered so that multiple eigenvalues are grouped together. Eigenvalue clustering is a numerically subtle task sensitive to roundoff error.
On the other hand, using eigenvectors for (partial) diagonalization does not require clustering and is thus simpler to use. Although unitary transformations appear to be numerically more appealing, in practice one often gets equally good or even better results by using eigenvectors for, e.g., solving multivariate root finding problems.

For commuting \emph{Hermitian} matrices $A_1,\ldots,A_d$, there is a unitary matrix $U$ that jointly diagonalizes all matrices. For this significantly simpler situation, compared to the general non-Hermitian case, randomized methods based on~\eqref{eq:Amu} have recently been analyzed in~\cite{HeKressner}, establishing favorable robustness and stability properties.

The rest of this work is organized as follows.
In Section~\ref{sec:preliminaries}, we provide some basic theory on commuting families and eigenvalue condition numbers. In Section~\ref{sec:Rayleigh} we present our main algorithm (Algorithm~\ref{alg:RJEA}) that uses a random linear combination to extract eigenvectors, followed by one- and two-sided Rayleigh quotients to extract joint eigenvalues. Section~\ref{sec:main} contains our main theoretical results, asymptotic error bounds of the approximations returned by Algorithm~\ref{alg:RJEA}. A probabilistic analysis is performed to account for randomness of the linear combinations and to better understand the effect of perturbations. Section~\ref{sec:synthetic} contains several numerical examples with synthetic data to understand different aspects of our proposed approach and the error bounds, highlighting that semisimple joint eigenvalues can be computed accurately  with high probability. In
Sections \ref{sec:mep} and \ref{sec:poly_roots} we present applications to multiparameter eigenvalue problems and solving systems of multivariate polynomials, respectively.

%%%%%%%%%%%%%%%%%%%%%%%%%%%%%%%%%%%%%%%%%%%%%%%%%%%%%%%%%%%%%%%%%%%%%%%%%%%%%%%%%%%%%%%%%%%

\section{Preliminaries}\label{sec:preliminaries}

\subsection{Basic properties of commuting families}\label{sec:commfam}

A family of matrices ${\cal A} =\{A_k \in \CC^{n \times n}\}_{k=1}^{d}$ is \emph{simultaneously diagonalizable} \cite[Definition 1.3.20]{horn13} if there exists an invertible matrix $X$ such that
all matrices $X^{-1}A_1X,\ldots,X^{-1}A_dX$ are diagonal. For a simultaneously diagonalizable family ${\cal A}$, it clearly holds that every $A_k$ is diagonalizable and $\mathcal{A}$ is a commuting family. It turns out that the converse also holds.
\begin{theorem}[{\cite[Theorem 1.3.21]{horn13}, \cite[Corollary 3, p. 224]{theoryofmatrices}}]\label{thm:sd_iff}
   A family of matrices ${\cal A}$ is simultaneously diagonalizable if and only if ${\cal A}$ is a commuting family and each $A_k$ is diagonalizable.
\end{theorem}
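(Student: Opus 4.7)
The forward implication is essentially immediate and I would dispose of it in one sentence: if $X^{-1}A_kX = D_k$ is diagonal for every $k$, then each $A_k = XD_kX^{-1}$ is by definition diagonalizable, and the $A_k$ commute because the $D_k$ do.

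For the converse, my plan is to argue by induction on the ambient dimension $n$. The base case $n=1$ is trivial. For the inductive step, if every $A_k$ is a scalar multiple of the identity, any invertible $X$ works. Otherwise, pick some $A_k$ with at least two distinct eigenvalues $\mu_1,\ldots,\mu_s$ and form the eigenspace decomposition $\CC^n = V_1 \oplus \cdots \oplus V_s$, where $V_i = \kernel(A_k - \mu_i I)$; this decomposition exists precisely because $A_k$ is diagonalizable. Since each $A_j$ commutes with $A_k$, each $V_i$ is $A_j$-invariant, so we may restrict the whole family to $V_i$ and obtain a commuting family on a strictly smaller space. Applying the inductive hypothesis on each block and gluing the resulting bases produces the desired simultaneously diagonalizing $X$.

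The one non-routine point, which I would flag as the main obstacle, is verifying that the restrictions $A_j|_{V_i}$ remain diagonalizable, so that the inductive hypothesis actually applies. The clean way to see this is via minimal polynomials: since $A_j$ is diagonalizable, its minimal polynomial $m_{A_j}$ factors into distinct linear factors over $\CC$. The minimal polynomial of $A_j|_{V_i}$ divides $m_{A_j}$, hence also factors into distinct linear factors, which is equivalent to diagonalizability. I would state this as a brief lemma or inline observation.

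An alternative route, which I would mention but not pursue in detail, is to induct on the number of matrices $d$: simultaneously diagonalize $A_1,\ldots,A_{d-1}$ by some $X$ so that each $X^{-1}A_jX$ is block diagonal with blocks indexed by the common eigenvalue patterns, observe that $X^{-1}A_dX$ must respect this block structure because it commutes with each $X^{-1}A_jX$ (whose eigenspaces are exactly these blocks), and then diagonalize $A_d$ within each block using that $A_d$, being diagonalizable, restricts to a diagonalizable operator on each invariant block by the same minimal-polynomial argument. Both approaches reduce to the same essential subtlety, namely the preservation of diagonalizability under restriction to invariant subspaces, so I would pick the dimension induction for brevity.
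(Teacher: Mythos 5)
Your proof is correct, and the key subtlety you flagged (diagonalizability is preserved under restriction to an invariant subspace, justified via the minimal polynomial dividing the original squarefree one) is exactly the point on which the argument hinges; the gluing step also works because each $A_j$ is block diagonal with respect to $V_1\oplus\cdots\oplus V_s$ and the restrictions to each $V_i$ still commute. Note that the paper does not actually prove this theorem---it cites it from Horn--Johnson and Gantmacher---so there is no in-paper proof to compare against; your dimension-induction argument is essentially the standard one from those references.
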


We recall from~\eqref{eq:joint_eigenvalue_schur} the definition of joint eigenvalues $\blambda^{(1)}, \ldots, \blambda^{(n)}$ for an $n\times n$ commuting family of $d$ matrices. Equivalently, $\blambda=(\lambda_1,\ldots,\lambda_d)$ is a joint eigenvalue if the eigenspaces
$\ker(A_k-\lambda_kI_n)$ have a nontrivial intersection. This observation leads to the following generalized notion of eigenspaces.

\begin{definition}[{\cite[Section 3.6]{AtkinsonBook}, \cite{kosirPlenstenjak02}}] \label{def:joint_eigenvalue_eigenspace}
Given a joint eigenvalue $\blambda=(\lambda_1,\ldots,\lambda_d) \in \CC^d$ 
of a commuting family $\cal A$, the \emph{common (right) eigenspace} belonging to $\blambda$ is defined as
\[\ker\left({\cal A}-\blambda \bI_n\right):=\bigcap_{k=1}^d \ker(A_k-\lambda_kI_n),\]
whereas the \emph{common \emph{left} eigenspace} belonging to $\blambda$ is 
defined as 
\[\ker\left(({\cal A}-\blambda \bI_n)^*\right):=\bigcap_{k=1}^d \ker\left((A_k-\lambda_kI_n)^*\right).\]
\end{definition}

The \emph{algebraic multiplicity} of $\blambda$ is defined as the number of times that $\blambda$ appears in the list of  $n$ joint eigenvalues $\blambda^{(1)},\ldots, \blambda^{(n)}$; see~\cite{kosirPlenstenjak02} and~\cite[Chapter 2]{invariantsubspacesbook} for an equivalent definition using \emph{root subspaces}. The \emph{geometric multiplicity} of $\blambda$ is defined as $\dim(\ker\left({\cal A}-\blambda \bI_n\right))$, the dimension of the common eigenspace. We say that $\blambda$ is \emph{simple} when its algebraic multiplicity is one and $\blambda$ is \emph{semisimple} if its algebraic and geometric multiplicities are equal.

Like for standard eigenvalue problems, the geometric multiplicity is bounded by the algebraic multiplicity. This fact can be derived from choosing a matrix $U$ that transforms all matrices 
to triangular form such that its first columns contain a basis of the common eigenspace $\ker({\cal A} - \blambda \bI_n)$. We will use a similar argument to show the following transformation, which will play an important role in our analysis in Section~\ref{sec:main}.

\begin{lemma}\label{lem:semisimple_partition}
Given a commuting family $\mathcal{A}$, let $\blambda = (\lambda_1,\ldots,\lambda_d)$ be a semisimple joint eigenvalue of multiplicity $p$. Then there exist invertible matrices $X= [X_1\ X_2] \in \mathbb{C}^{n \times n}$, $Y= [Y_1\ Y_2] \in  \mathbb{C}^{n \times n}$ such that
\begin{equation}\label{eq:semisimple_joint_normal_cond}
X_1^* X_1 = I_p, \qquad Y^*X = I_n, \qquad 
     Y^*A_kX = \begin{bmatrix}
       \lambda_k I_p & 0 \\
        0 & A^{(k)}_{22}
    \end{bmatrix},\quad k=1,\ldots,d,
\end{equation}
for some $A_{22}^{(k)}\in\mathbb{C}^{(n-p)\times (n-p)}$.
\end{lemma}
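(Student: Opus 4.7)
The plan is to construct $X_1$ from an orthonormal basis of the common right eigenspace of $\blambda$ and $X_2$ from a basis of an $\mathcal{A}$-invariant complement, then take $Y := X^{-*}$ so that similarity by $X$ simultaneously block-diagonalizes every $A_k$ with the desired block $\lambda_k I_p$ in the upper-left corner.

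To execute this, first set $V := \ker(\mathcal{A} - \blambda\bI_n)$. By semisimplicity, $\dim V = p$, and $V$ coincides with the joint root subspace $R_{\blambda} := \bigcap_{k} \ker((A_k - \lambda_k I_n)^n)$, since the algebraic multiplicity equals $\dim R_{\blambda}$ by the root-subspace characterization mentioned after Definition~\ref{def:joint_eigenvalue_eigenspace}, and the inclusion $V \subseteq R_{\blambda}$ is immediate. The commuting family admits a joint root-subspace decomposition $\CC^n = \bigoplus_{\blambda'} R_{\blambda'}$ with each summand invariant under every $A_k$. Let $W$ be the direct sum of the root subspaces for joint eigenvalues different from $\blambda$, so $\CC^n = V \oplus W$ and $W$ is $A_k$-invariant. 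Choose $X_1 \in \CC^{n\times p}$ with orthonormal columns spanning $V$ (giving $X_1^*X_1 = I_p$ and $A_k X_1 = \lambda_k X_1$) and $X_2 \in \CC^{n\times (n-p)}$ any basis of $W$ (giving $A_k X_2 = X_2 A_{22}^{(k)}$ for some $A_{22}^{(k)}\in\CC^{(n-p)\times(n-p)}$). Then $X = [X_1\ X_2]$ is invertible, and $Y := X^{-*}$ satisfies $Y^*X = I_n$ together with $Y^*A_k X = X^{-1}A_k X = \mathrm{diag}(\lambda_k I_p, A_{22}^{(k)})$, as required.

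The main obstacle is justifying the joint root-subspace decomposition and the equality $V = R_{\blambda}$, which together yield the simultaneous invariant complement $W$. Both are standard for commuting families and can be cited from the references already given for Definition~\ref{def:joint_eigenvalue_eigenspace}; alternatively, one may derive the decomposition inductively by starting with the generalized eigenspace decomposition of $A_1$, observing that each generalized eigenspace is invariant under $A_2,\ldots,A_d$ by commutativity, and refining until every summand corresponds to a single joint eigenvalue. Once this is in hand, the rest of the proof is bookkeeping via the block structure of $X^{-1}A_k X$.
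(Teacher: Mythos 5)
Your proof is correct, but it follows a genuinely different route from the paper. You invoke the joint root-subspace decomposition $\CC^n = \bigoplus_{\blambda'} R_{\blambda'}$ of a commuting family together with the identification of the algebraic multiplicity with $\dim R_{\blambda}$, and then use semisimplicity to conclude $\ker(\mathcal{A}-\blambda\bI_n)=R_{\blambda}$, which immediately yields a simultaneously $A_k$-invariant complement $W$; block-diagonalization then follows from a direct computation with $Y:=X^{-*}$. The paper instead stays closer to the machinery it has already set up: it extends an orthonormal basis $U_1$ of the common eigenspace to a unitary $U$, obtaining block upper-triangular $T_k = U^*A_kU$, then uses Lemma~\ref{lem:lemma1HD} to pick $\mu$ with $\lambda(\mu)\notin\sigma(A_{22}(\mu))$, solves a Sylvester equation to decouple $T(\mu)$, and finally exploits commutativity of the block-diagonalized $T(\mu)$ with the transformed $T_k$ to force the off-diagonal blocks of every $T_k$ to vanish. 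Your argument is conceptually cleaner and directly exhibits the invariant complement, but it imports root-subspace theory that the paper only cites (and your inductive sketch of the decomposition, while standard, is another thing the reader must accept or verify); the paper's argument is longer but self-contained relative to Lemma~\ref{lem:lemma1HD} and a standard Sylvester-equation fact, and it fits the paper's theme of exploiting generic linear combinations $A(\mu)$. Both yield $X_1$ orthonormal and the required normalization $Y^*X = I_n$.
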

  To show Lemma~\ref{lem:semisimple_partition}, we need the following simple but important observation. 
  From the definition~\eqref{eq:joint_eigenvalue_schur} of the joint eigenvalues
$\blambda^{(i)}=(\lambda_1^{(i)},\ldots,\lambda_d^{(i)})$, $i=1,\ldots,n$, it follows that that the eigenvalues of the linear combination $A(\mu)$ from~\eqref{eq:Amu} are given by
\begin{equation*}
\lambda_i(\mu)=\mu_1\lambda_1^{(i)} + \mu_2\lambda_2^{(i)} + \cdots + \mu_d\lambda_d^{(i)},\quad i=1,\ldots,n.
\end{equation*}
We then have the following result from~\cite[Lemma 1]{HeKressner}.
\begin{lemma}\label{lem:lemma1HD}
    Let a commuting family ${\cal A}$, joint eigenvalues $\blambda^{(i)}$ and $\lambda_i(\mu)$ be defined as above. Then the following statement holds generically (with respect to $\mu \in \CC^d$) for all $i,j = 1,\ldots,d$: The equality $\lambda_i(\mu)=\lambda_j(\mu)$ holds if and only if $\blambda^{(i)}=\blambda^{(j)}$.
\end{lemma}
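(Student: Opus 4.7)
The plan is to exploit the fact that each $\lambda_i(\mu)$ is a \emph{linear} function of $\mu$, so the condition $\lambda_i(\mu)=\lambda_j(\mu)$ becomes a single linear equation in $\mu$. The claim will then reduce to the elementary observation that a nonzero linear functional vanishes only on a proper hyperplane, and a finite union of such hyperplanes is negligible.

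First I would dispatch the trivial direction: if $\blambda^{(i)}=\blambda^{(j)}$, then $\lambda_k^{(i)}=\lambda_k^{(j)}$ for every $k=1,\ldots,d$, and hence
\[
\lambda_i(\mu)=\sum_{k=1}^d \mu_k\lambda_k^{(i)}=\sum_{k=1}^d \mu_k\lambda_k^{(j)}=\lambda_j(\mu)
\]
for \emph{every} $\mu\in\CC^d$, with no genericity hypothesis needed.

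For the nontrivial direction, I would fix a pair $(i,j)$ with $\blambda^{(i)}\neq\blambda^{(j)}$ and set $v_{ij}:=\blambda^{(i)}-\blambda^{(j)}\in\CC^d\setminus\{0\}$. The equality $\lambda_i(\mu)=\lambda_j(\mu)$ then reads $\mu^{T}v_{ij}=0$, which defines a proper complex hyperplane $H_{ij}\subsetneq\CC^d$. Taking the union
\[
B \;=\; \bigcup_{(i,j):\;\blambda^{(i)}\neq\blambda^{(j)}} H_{ij}
\]
over the finitely many such pairs yields a finite union of proper linear subspaces, which is both a proper algebraic subvariety of $\CC^d$ and a set of Lebesgue measure zero in $\CC^d\cong\RR^{2d}$. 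For every $\mu\in\CC^d\setminus B$, the equivalence $\lambda_i(\mu)=\lambda_j(\mu)\Leftrightarrow\blambda^{(i)}=\blambda^{(j)}$ then holds simultaneously for all index pairs, which is the precise content of the lemma.

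There is no real obstacle to this proof; it is essentially a one-line consequence of the linearity of $\lambda_i(\mu)$ in $\mu$. The only mildly subtle point is fixing the intended meaning of ``generically,'' but every standard reading is covered at once: $B$ is contained in a proper Zariski-closed subset of $\CC^d$, has Lebesgue measure zero, and its intersection with the unit sphere $\sphere{d}$ (the distribution used in~\eqref{eq:Amu}) has zero surface measure, so the statement holds almost surely with respect to the randomization adopted in the paper.
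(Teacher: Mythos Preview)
Your argument is correct and is precisely the standard one: the paper does not prove this lemma but simply cites it from \cite[Lemma~1]{HeKressner}, where the proof is exactly the hyperplane-union argument you give. There is nothing to add.
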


As a corollary of Lemma~\ref{lem:lemma1HD} it follows for a simultaneously diagonalizable family ${\cal A}$ that $\ker\left({\cal A}-\blambda \bI_n\right) = \ker\left(A(\mu) -\lambda(\mu) I_n\right)$ holds for generic $\mu \in \CC^d$, i.e., generic linear combinations preserve common eigenspaces. We can now prove Lemma~\ref{lem:semisimple_partition}.

\begin{proof}[Proof of Lemma \ref{lem:semisimple_partition}]
 Let the columns of $U_1 = [u_1 \ \cdots \ u_p] \in \CC^{n \times p}$ be an orthonormal basis of the common eigenspace $\ker\left({\cal A}-\blambda \bI_n\right)$, which, by assumption, has dimension $p$. Setting a unitary $U = [U_1 \ U_2] \in \CC^{n \times n}$ yields 
\[T_k = U^*A_kU = \begin{bmatrix}
    \lambda_k I_p & A^{(k)}_{12} \\
    0 & A^{(k)}_{22}
\end{bmatrix}, \quad k=1,\ldots,d,  \quad T(\mu) = U^*A(\mu)U = \begin{bmatrix}
    \lambda(\mu) I_p & A_{12}(\mu) \\
    0 & A_{22}(\mu)
\end{bmatrix}, \]
for every $\mu \in \CC^d$. Because $\blambda$ has multiplicity $p$, it cannot be a joint eigenvalue for the commuting family $A^{(1)}_{22},\ldots, A^{(d)}_{22}$. By Lemma~\ref{lem:lemma1HD}, there exists $\mu \in \CC^d$ such that $\lambda(\mu)$ is not an eigenvalue of $A_{22}(\mu)$. Standard arguments \cite[Theorem 1.5, Chapter V]{StewartSun} imply that there exists $R \in \CC^{p \times (n-p)}$
such that
\[
\overline T(\mu) :=\overline{Y}^* T(\mu) \overline{X} = \begin{bmatrix}
    \lambda(\mu)I_p & 0 \\
    0& A_{22}(\mu)
\end{bmatrix}\quad\textrm{for}\quad
\overline{X} = \begin{bmatrix}
    I_p & R \\
     0 & I_{n-p}
\end{bmatrix},\quad  \overline{Y} = \begin{bmatrix}
    I_p & 0 \\
   -R^* & I_{n-p}
\end{bmatrix}. \] 
Note that $\overline Y^* \overline X = I_n$. Because $\overline T(\mu)$ commutes with every $\overline{Y}^* T_k \overline{X}$ and $\lambda(\mu)$ is not an eigenvalue of $A_{22}(\mu)$, it follows that
\[\overline{Y}^* T_k \overline{X} = \begin{bmatrix}
    \lambda_k I_p & 0\\
    0& A^{(k)}_{22}
\end{bmatrix},\quad k = 1,\ldots d.\]
The proof is completed by setting $X =  U\overline X  $ and $Y = U\overline{Y}$.
\end{proof}

A consequence of Lemma~\ref{lem:semisimple_partition} is the following result that gives another equivalent characterization of simultaneous diagonalizability (in addition to Theorem~\ref{thm:sd_iff}).
\begin{theorem}\label{thm:joint_semisimple_iff}
   A commuting family ${\cal A}=\{A_k \in \CC^{n \times n}\}_{k=1}^{d}$ is simultaneously diagonalizable if and only if all joint eigenvalues of $\cal A$ are semisimple.
\end{theorem}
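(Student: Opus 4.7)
The statement is an equivalence, so I would split the proof into its two directions, with the forward one essentially tautological and the reverse one obtained by induction on $n$ with Lemma~\ref{lem:semisimple_partition} as the key deflation tool.

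For the forward direction, assume $\mathcal{A}$ is simultaneously diagonalizable by some invertible $X$, so that $X^{-1}A_kX = \mathrm{diag}(d_1^{(k)},\ldots,d_n^{(k)})$ for $k=1,\ldots,d$. Reading off the columns, the joint eigenvalues are exactly the $d$-tuples $(d_i^{(1)},\ldots,d_i^{(d)})$, $i=1,\ldots,n$. If $\blambda$ has algebraic multiplicity $p$, then this tuple appears at $p$ distinct indices, and the corresponding $p$ columns of $X$ are linearly independent common eigenvectors for $\blambda$. Thus the geometric multiplicity is at least $p$, and combined with the geometric-$\le$-algebraic bound recalled just before Lemma~\ref{lem:semisimple_partition}, the two multiplicities coincide, so $\blambda$ is semisimple.

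For the reverse direction, I would induct on $n$. The base case $n=1$ is trivial. For the step, pick any joint eigenvalue $\blambda$ of $\mathcal{A}$, of multiplicity $p\ge 1$; by hypothesis it is semisimple, so Lemma~\ref{lem:semisimple_partition} supplies $X,Y$ with $Y^*X = I_n$ (hence $Y^* = X^{-1}$, making this a genuine similarity) such that $X^{-1}A_kX = \mathrm{diag}(\lambda_k I_p,\; A_{22}^{(k)})$ for each $k$. The deflated family $\mathcal{A}_{22}:=\{A_{22}^{(k)}\}_{k=1}^d$ is again commuting and has size $n-p < n$. If I can verify that every joint eigenvalue of $\mathcal{A}_{22}$ is semisimple, the inductive hypothesis gives a simultaneous diagonalization of $\mathcal{A}_{22}$, and padding with $I_p$ in the first block yields a simultaneous diagonalization of $\mathcal{A}$.

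The main obstacle, then, is the bookkeeping that transports semisimplicity from $\mathcal{A}$ to $\mathcal{A}_{22}$. I would argue via the block diagonal structure: for any $\mu\in\CC^d$, the spectrum of $A(\mu)$ (with multiplicities) is the union of the spectrum of $\lambda(\mu)I_p$ and that of $A_{22}(\mu)$, so by Lemma~\ref{lem:lemma1HD} the joint eigenvalues of $\mathcal{A}$ split accordingly: the first block contributes $\blambda$ with multiplicity exactly $p$, and the second contributes the remaining joint eigenvalues $\blambda^{(i)}\ne\blambda$ with their full algebraic multiplicities. Likewise, the block diagonal form makes any common eigenspace of $\mathcal{A}$ decompose as a direct sum of a piece lying in the first block and a piece lying in the second; for a joint eigenvalue $\boldsymbol{\nu}\ne\blambda$ the first contribution is zero, so the common eigenspace of $\boldsymbol{\nu}$ in $\mathcal{A}$ is isomorphic (by padding with zeros) to that in $\mathcal{A}_{22}$, meaning geometric multiplicities also match. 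Hence the semisimplicity of each $\boldsymbol{\nu}\ne\blambda$ in $\mathcal{A}$ descends to semisimplicity in $\mathcal{A}_{22}$, completing the induction. The delicate point I expect to spend the most care on is precisely this inheritance argument, phrased without secretly invoking the very simultaneous triangularization we are trying to establish.
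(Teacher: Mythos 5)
Your proposal is correct and follows essentially the same route as the paper: the forward direction passes to the simultaneously diagonal form and matches geometric against algebraic multiplicity there, while the reverse direction is the repeated application of Lemma~\ref{lem:semisimple_partition} (phrased as induction on $n$). The one difference is that the paper simply asserts that semisimplicity is inherited by the deflated blocks $\{A_{22}^{(k)}\}$, whereas you spell out that inheritance argument (via the block-diagonal splitting of both spectra and common eigenspaces); this is a welcome elaboration of a step the paper leaves implicit, not a divergence in approach.
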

\begin{proof}
Let $X$ be an invertible matrix such that each $D_k = X^{-1}A_kX$ is diagonal. Since similarity transformations preserve joint eigenvalues, the semisimplicity of every joint eigenvalue $\blambda$ follows from $\dim(\ker(\mathcal{D} - \blambda \bI_n)) = \dim(\ker(\mathcal{A} - \blambda  \bI_n))$, where ${\cal D}=\{D_1,\ldots,D_p\}$.

If all joint eigenvalues of $\cal A$ are semisimple, the same property holds for the diagonal blocks $\{A_{22}^{(k)}\}_{k=1}^{d}$ in~\eqref{eq:semisimple_joint_normal_cond}. In turn, the proof in the other direction follows from the repeated application of Lemma~\ref{lem:semisimple_partition}.
\end{proof}

We now relate the diagonalizability of the family to the diagonalizability of a linear combination.

\begin{theorem}\label{thm:theorem2HD}
    Let ${\cal A}=\{A_k \in \CC^{n \times n}\}_{k=1}^{d}$ be a simultaneously diagonalizable commuting family. Then the following statement holds generically (with respect to $\mu \in \CC^d$): If an invertible matrix $X$ diagonalizes $A(\mu)$, then 
    $X$ diagonalizes $A_k$ for every $k=1,\ldots,d$.
\end{theorem}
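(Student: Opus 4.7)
The plan is to combine Theorem~\ref{thm:joint_semisimple_iff} with Lemma~\ref{lem:lemma1HD} and the corollary following it to identify the eigenspaces of $A(\mu)$ with the common eigenspaces of $\mathcal{A}$ for generic $\mu$, and then observe that any diagonalization of $A(\mu)$ is forced to pick bases of common eigenspaces.

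First, since $\mathcal{A}$ is simultaneously diagonalizable, Theorem~\ref{thm:joint_semisimple_iff} guarantees that every joint eigenvalue of $\mathcal{A}$ is semisimple, so its algebraic multiplicity equals its geometric multiplicity. Let $\blambda^{[1]},\ldots,\blambda^{[s]}$ be the distinct joint eigenvalues with multiplicities $p_1,\ldots,p_s$ (so $p_1+\cdots+p_s=n$), and denote by $\lambda_\ell(\mu)=\mu_1\lambda_1^{[\ell]}+\cdots+\mu_d\lambda_d^{[\ell]}$ the associated eigenvalue of $A(\mu)$. By Lemma~\ref{lem:lemma1HD}, there is a proper algebraic subset of $\CC^d$ outside of which $\lambda_\ell(\mu)\neq \lambda_{\ell'}(\mu)$ whenever $\ell\neq \ell'$; thus the distinct eigenvalues of $A(\mu)$ are in bijection with the distinct joint eigenvalues of $\mathcal{A}$, each with the same multiplicity.

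Second, I would invoke the corollary of Lemma~\ref{lem:lemma1HD} stated in the excerpt: generically in $\mu$, the equality $\ker(\mathcal{A}-\blambda^{[\ell]}\bI_n)=\ker(A(\mu)-\lambda_\ell(\mu)I_n)$ holds for every $\ell=1,\ldots,s$. Since $\mathcal{A}$ is simultaneously diagonalizable, $\dim\ker(\mathcal{A}-\blambda^{[\ell]}\bI_n)=p_\ell$, so this identifies the $p_\ell$-dimensional $\lambda_\ell(\mu)$-eigenspace of $A(\mu)$ with the common eigenspace of $\mathcal{A}$ at $\blambda^{[\ell]}$.

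Finally, suppose $X$ is any invertible matrix diagonalizing $A(\mu)$ for such a generic $\mu$. Each column of $X$ is an eigenvector of $A(\mu)$, hence lies in one of the spaces $\ker(A(\mu)-\lambda_\ell(\mu)I_n)$; by the previous step this column is a common eigenvector of $\mathcal{A}$ with joint eigenvalue $\blambda^{[\ell]}$. Therefore $A_kX=X\,D_k$ where $D_k$ is the diagonal matrix whose entry corresponding to a column in the $\ell$-th group equals $\lambda_k^{[\ell]}$, which gives $X^{-1}A_kX=D_k$ for every $k$. The only delicate point is justifying that the genericity statements from Lemma~\ref{lem:lemma1HD} and its corollary can be applied simultaneously to all pairs of indices and all joint eigenvalues; this is immediate since a finite union of proper algebraic subsets of $\CC^d$ is again a proper algebraic subset.
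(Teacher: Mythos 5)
Your proof is correct, but it takes a noticeably different route than the paper's. The paper partitions $X$ so that its first $p$ columns span the $\lambda_1(\mu)$-eigenspace of $A(\mu)$, writes $X^{-1}A_kX$ in $2\times 2$ block form, and then uses the commutativity of $A(\mu)$ and $A_k$ together with the invertibility of $A_{22}(\mu)-\lambda_1(\mu)I_{n-p}$ to kill the off-diagonal blocks; Lemma~\ref{lem:lemma1HD} is invoked to show the $(1,1)$ block is scalar, and the argument recurses on $A_{22}^{(k)}$. You instead lean on the corollary stated after Lemma~\ref{lem:lemma1HD} (generic linear combinations preserve common eigenspaces) together with Theorem~\ref{thm:joint_semisimple_iff}, reducing the claim to the observation that every column of a diagonalizer of $A(\mu)$ is an eigenvector of $A(\mu)$, hence (for generic $\mu$) a common eigenvector of $\mathcal{A}$. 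Your version is more conceptual and avoids the induction, at the cost of outsourcing the essential work to that corollary, which the paper states but does not prove; the paper's block-triangular argument is, in effect, a self-contained proof of that very fact. In either treatment, one does need the simultaneous diagonalizability hypothesis (not just commutativity) to pass from ``single joint eigenvalue on the restricted block'' to ``the block is scalar'' -- you obtain this via semisimplicity from Theorem~\ref{thm:joint_semisimple_iff}, while the paper relies implicitly on the restriction of a diagonalizable $A_k$ to an invariant subspace remaining diagonalizable. Your closing remark about intersecting finitely many generic conditions is a correct and worthwhile point that the paper leaves implicit.
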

\begin{proof}
The proof proceeds along the lines of the proof of~\cite[Theorem 2]{HeKressner}.
Without loss of generality, we may assume that the first $p$ columns of $X$ span the right eigenspace belonging
  to an eigenvalue $\lambda_1(\mu)$ of $A(\mu)$. Then 
  $$X^{-1}A(\mu)X=\left[\begin{matrix}\lambda_1(\mu)I_{p} & 0 \cr 0 & A_{22}(\mu)\end{matrix}\right]\quad
  \textrm{and}\quad
    X^{-1}A_kX=\left[\begin{matrix}A_{11}^{(k)} & A_{12}^{(k)} \cr A_{21}^{(k)} & A_{22}^{(k)}\end{matrix}\right],\quad k=1,\ldots,d.
  $$ 
  Since $A(\mu)$ and $A_k$ commute, we have
  $A_{12}^{(k)}(A_{22}(\mu)-\lambda_1(\mu)I_{n-p})=0$, $(A_{22}(\mu)-\lambda_1(\mu)I_{n-p})A_{21}^{(k)}=0$, and $A_{22}(\mu)A_{22}^{(k)}=A_{22}^{(k)}A_{22}(\mu)$.
  Since $A_{22}(\mu)-\lambda_1(\mu)I_{n-p}$ is invertible, $A_{12}^{(k)}=0$ and $A_{21}^{(k)}=0$. It follows from Lemma \ref{lem:lemma1HD} that $A_{11}^{(k)}$ is a multiple of the identity matrix for a generic $\mu$. 
  We continue the proof by considering $A_{22}(\mu)$ and $A_{22}^{(k)}$ in the same way.
\end{proof}

If all joint eigenvalues of ${\cal A}$ are simple, then
the common eigenvectors %, which are columns of matrix $X$ that diagonalizes all matrices in ${\cal A}$, 
are uniquely determined (up to scaling). Therefore, for 
generic $\mu \in \CC^d$, we get essentially the same
eigenvector matrix $X$ by diagonalizing $A(\mu)$. 

If the diagonalizability assumption on ${\cal A}$ in Theorem~\ref{thm:theorem2HD}
is not satisfied, then, by Theorem~\ref{thm:sd_iff}, ${\cal A}$ contains at least one nondiagonalizable matrix. It can be shown that this implies that 
$A(\mu)$ is nondiagonalizable as well for almost every $\mu$.
 Note that the canonical structure of a general commuting family ${\cal A}$ can be quite complex, see, e.g.,~\cite{KosirLAA93, invariantsubspacesbook, commutativematrices}, and the matrices $A_k$ do not necessarily have the same Jordan  structure.
We will touch on this case again in Section~\ref{sec:Rayleigh}.

\subsection{Eigenvalue condition numbers}\label{sec:eigenvalue_condition_number}

Errors, due to roundoff or other sources, inevitably destroy commutativity properties. Hence, in practical situations one usually deals with a perturbed, \emph{nearly} commuting family of matrices $\mathcal{\widetilde A} = \{\widetilde A_1,\ldots,\widetilde A_d\}$, with $\widetilde A_i=A_i+\epsilon E_i$ for $i=1,\ldots,d$. To study the impact of these perturbations on eigenvalues, we first recall the notion of eigenvalue condition numbers.

For a simple eigenvalue $\lambda$ of a matrix $A  \in \mathbb{C}^{n \times n}$ it is well known \cite{Golub_VanLoan_4,firstOrderPerturation,horn13} that $\lambda$ is differentiable with respect to $A$ and that the corresponding eigenvalue $\widetilde \lambda(\epsilon)$ of the perturbed matrix $A + \epsilon E$ admits the expansion 
\begin{equation}\label{eq:simple_eigenvalue_expansion}
    \widetilde \lambda  = \lambda + (y^*Ex)\epsilon + \mathcal{O}(\epsilon^2),
\end{equation}
where $x$ and $y$ are right/left eigenvectors belonging to $\lambda$ with the normalization chosen such that $y^*x = 1$ and $\|x\|_2 = 1$. Here and in the following, $\|\cdot\|_2$ denotes the $2$-norm of a vector or matrix.
Motivated by~\eqref{eq:simple_eigenvalue_expansion}, the condition number of $\lambda$ is defined as follows.
\begin{definition}\label{def:eigenvalue_condition_simple}
With the notation introduced above, $\kappa(\lambda) = \|y\|_2$ is called
the eigenvalue condition number of a simple eigenvalue $\lambda$.
\end{definition}

The matrix $P_\lambda = xy^*$ is the spectral projection~\cite{Stewart73,Kato95,LAPACK} corresponding to the simple eigenvalue $\lambda$. Noting that $\kappa(\lambda) = \|P_\lambda\|_2$ motivates the following extension to semisimple eigenvalues.
\begin{definition} \label{def:eigenvalue_condition_semisimple}
The \emph{spectral projection} corresponding to a semisimple eigenvalue $\lambda$ of a matrix $A$ is the matrix $P_\lambda$ such that $P_\lambda$ is a projection, the column space of $P_\lambda$ is the right eigenspace $\ker(A-\lambda I_n)$ and the row space of  $P_\lambda$ is the left eigenspace $\ker((A-\lambda I_n)^*)$.
The eigenvalue condition number of $\lambda$ is defined as
    $\kappa(\lambda) = \|P_\lambda\|_2$.
\end{definition}
Note that LAPACK~\cite{LAPACK} also uses norms of spectral projections to define condition numbers for semisimple eigenvalues and clusters of eigenvalues.

Similar to~\eqref{eq:semisimple_joint_normal_cond}, if $\lambda$ is a semisimple eigenvalue of multiplicity $p$ of 
 a matrix $A  \in \mathbb{C}^{n \times n}$, then there exist invertible matrices $X = [X_1 \ X_2] \in \CC^{n \times n}, Y = [Y_1 \ Y_2] \in \CC^{n \times n}$ satisfying
 \begin{equation*}
  X_1^*X_1 = I_p, \quad Y^*X = I_n, \quad  Y^*AX = \begin{bmatrix}
     \lambda I_p & 0 \\
     0 & A_{22}
 \end{bmatrix}
 \end{equation*}
 for some $A_{22} \in \CC^{(n-p) \times (n-p)}$. It follows that $X_1 Y_1^*$ is the spectral projection corresponding to $\lambda$ and, hence, $\kappa(\lambda) = \|Y_1\|_2$.

The following result extends the expansion~\eqref{eq:simple_eigenvalue_expansion} to semisimple eigenvalues.

\begin{theorem}\label{thm:semisimple_eigenvalue_expansion}
    Given $A\in\CC^{n\times n}$, let $\lambda$ be a semisimple eigenvalue of multiplicity $p$ 
    and let $X_1$ and $Y_1$ be bases of $\ker(A-\lambda I_n)$ and $\ker((A-\lambda I_n)^*)$, respectively, such that $Y_1^*X_1 = I_p$. Then $A+\epsilon E$ has $p$ eigenvalues
    satisfying the expansion
    \[\widetilde \lambda_i = \lambda_i + \eta_i \epsilon + {o}(\epsilon),\quad i=1,\ldots,p,\]
    where $\eta_1,\ldots,\eta_p$ are the eigenvalues of $Y_1^*EX_1$.
    \end{theorem}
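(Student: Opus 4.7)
The plan is to exploit the block-diagonal normal form displayed just before the theorem statement to reduce the problem to a perturbation analysis of the $p\times p$ block associated with $\lambda$. Since $\lambda$ is semisimple, that form exhibits a clean spectral gap between $\lambda$ and the spectrum of $A_{22}$, which is exactly what invariant-subspace perturbation theory needs.

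First I would apply the similarity $Y^*(\cdot)X$ to $A+\epsilon E$. Setting $E_{ij}:=Y_i^*EX_j$ and using the block-diagonal structure of $Y^*AX$ yields
\[
Y^*(A+\epsilon E)X \;=\; \begin{bmatrix}\lambda I_p+\epsilon E_{11} & \epsilon E_{12}\\ \epsilon E_{21} & A_{22}+\epsilon E_{22}\end{bmatrix}.
\]
Because $\lambda$ is not an eigenvalue of $A_{22}$, for all sufficiently small $\epsilon$ the two diagonal blocks retain disjoint spectra while the off-diagonal couplings are of order $\epsilon$. A standard Sylvester-equation argument in the spirit of \cite[Chapter V]{StewartSun}, applied to the perturbed matrix, then produces a similarity of the form $I_n+O(\epsilon)$ that block-diagonalizes it. The resulting $(1,1)$ block equals
\[
M(\epsilon)\;=\;\lambda I_p+\epsilon E_{11}+O(\epsilon^2)\;=\;\lambda I_p+\epsilon\bigl(Y_1^*EX_1+O(\epsilon)\bigr),
\]
since the $O(\epsilon^2)$ correction to $E_{11}$ comes from the product of an $O(\epsilon)$ block transformation with the $O(\epsilon)$ off-diagonal entries.

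Next, the $p$ eigenvalues of $A+\epsilon E$ that remain close to $\lambda$ are exactly the eigenvalues of $M(\epsilon)$. Writing $M(\epsilon)=\lambda I_p+\epsilon F(\epsilon)$ with $F(\epsilon)=Y_1^*EX_1+O(\epsilon)$, continuity of the spectrum (e.g., via Elsner's theorem) ensures that the eigenvalues of $F(\epsilon)$ converge to those of $Y_1^*EX_1$ as $\epsilon\to 0$. Ordering the latter as $\eta_1,\ldots,\eta_p$ and matching them with the corresponding branches yields $\widetilde\lambda_i=\lambda+\eta_i\,\epsilon+o(\epsilon)$ for $i=1,\ldots,p$.

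The main obstacle is making sure the $O(\epsilon)$ coupling between the $\lambda$-block and the $A_{22}$-block does not pollute the first-order coefficient: one must first clear the off-diagonals via the block-diagonalizing similarity, and only then read off the eigenvalues from the isolated $p\times p$ block. Semisimplicity of $\lambda$ is essential here because it gives the spectral gap that makes the relevant Sylvester operator boundedly invertible. An alternative and slightly stronger route would be to invoke Kato's analytic perturbation theory, which even delivers $O(\epsilon^2)$ on each eigenvalue branch; the weaker $o(\epsilon)$ bound claimed in the theorem, however, already follows from the elementary continuity argument above.
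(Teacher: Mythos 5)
Your proof is correct but follows a genuinely different route from the paper. The paper proves this result in one line by citing Theorem~6 of Lancaster, Markus, and Zhou on perturbations of semisimple eigenvalues of analytic matrix functions, specialized to $L(\lambda,\epsilon)=A-\lambda I+\epsilon E$. Your argument instead gives a self-contained derivation: apply the block-diagonalizing similarity $Y^*(\cdot)X$, observe that the spectral gap between $\lambda$ and $\sigma(A_{22})$ makes the relevant Sylvester operator invertible, block-diagonalize the perturbed matrix by an $I_n+O(\epsilon)$ similarity (zeroing both off-diagonal blocks with Riccati-type corrections that are $O(\epsilon)$), read off the isolated $p\times p$ block as $\lambda I_p+\epsilon\bigl(Y_1^*EX_1+O(\epsilon)\bigr)$, and finish by continuity of eigenvalues. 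That continuity step is the right one to reach $o(\epsilon)$ without needing analyticity of individual branches, which can fail when $Y_1^*EX_1$ itself has a repeated eigenvalue. What each approach buys: the paper's citation is shorter and inherits a more general framework (analytic dependence on $\epsilon$), whereas your derivation is elementary, fully visible, and uses exactly the Stewart--Sun Sylvester machinery the paper already relies on elsewhere (Lemma~\ref{lem:semisimple_partition} and Appendix~A), making it a natural fit.

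One small caution on your closing aside: invoking Kato's analytic perturbation theory does \emph{not} in general deliver $O(\epsilon^2)$ per branch for a semisimple $\lambda$. When $Y_1^*EX_1$ has a multiple eigenvalue, the branches are only Puiseux-analytic and may behave like $\lambda+\eta\epsilon+c\,\epsilon^{3/2}+\cdots$, so $o(\epsilon)$ is the sharp generic statement; this is precisely why the cited Lancaster--Markus--Zhou result, and your own main argument, stop at $o(\epsilon)$. Since that remark is an aside and your core argument claims only $o(\epsilon)$, the proof itself stands.
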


\begin{proof}
 This is a particular case covered by Theorem 6 in \cite{LaMaZh03_PertSemisimple} on
 eigenvalues of analytic matrix functions of the form $L(\lambda,\epsilon)$, where we take 
 $L(\lambda,\epsilon)=A-\lambda I + \epsilon E$.
\end{proof}

When imposing the normalization $X_1^*X_1 = I_p$, it follows that 
$|\eta_i| \le \|Y_1^*EX_1\|_2 \le \|Y_1\|_2 \|E\|_2$, which provides another justification for defining $\|Y_1\|_2$ to be the eigenvalue condition number.

By extending the notion of spectral projections to a commuting family $\cal A$, an extension of the %eigenvalue 
condition number to semisimple eigenvalues follows naturally.
\begin{definition} \label{def:eigenvalue_condition_joint_semisimple}
The \emph{spectral projection} corresponding to a semisimple joint eigenvalue $\blambda$ of a commuting family $\cal A$ is the matrix $P_\blambda$ such that $P_\blambda$ is a projection, 
the column space of $P_\blambda$ is the right common eigenspace $\ker(\mathcal{A}-\blambda \bI_n)$ and the row space of  $P_\blambda$ is the left common eigenspace $\ker((\mathcal{A}-\blambda \bI_n)^*)$ (see Definition~\ref{def:joint_eigenvalue_eigenspace}).
The eigenvalue condition number of $\blambda$ is defined as
    $\kappa(\blambda) = \|P_\blambda\|_2$.
\end{definition}
The block diagonal decomposition~\eqref{eq:semisimple_joint_normal_cond} implies again that $\kappa(\blambda) = \|Y_1\|_2$ .

%%%%%%%%%%%%%%%%%%%%%%%%%%%%%%%%%%%%%%%%%%%%%%%%%%%%%%%%%%%%%%%%%%%%%%%%%%%%%%%%%%%%%%%%%%%
\section{Rayleigh quotients}\label{sec:Rayleigh}

Algorithm \ref{alg:RJEA} summarizes our simple approach for approximating joint eigenvalues of nearly commuting family. The two types of Rayleigh quotients used for this purpose are discussed in more detail below.

\begin{algorithm}[H]
    \caption{\textbf{R}andomized \textbf{J}oint \textbf{E}igenvalue \textbf{A}pproximation }
    \textbf{Input:} \text{A nearly commuting family $\widetilde{\mathcal{A}}= \{\widetilde A_1,\ldots, \widetilde A_d\}$, $\text{opt} \in \{\text{RQ1},\text{RQ2}\}$.}\\
     \textbf{Output:} \text{Approximations of joint eigenvalues of $\widetilde{\mathcal{A}}$.}
    \begin{algorithmic}[1]
    \label{alg:RJEA}
        \STATE Draw $\mu \in \mathbb{C}^d$ from the uniform distribution on the unit sphere.
        \STATE Compute $\widetilde A(\mu) = \mu_1 \widetilde A_1 + \cdots + \mu_d \widetilde A_d$.
        \STATE Compute invertible  matrices $\widetilde X, \widetilde Y$ such that the columns of $\widetilde X$ have norm $1$,\\ $\widetilde Y^* \widetilde X = I_n$,  and $\widetilde Y^* \widetilde A(\mu) \widetilde X$ is diagonal.
        \IF{opt $=$ RQ$1$}
        \RETURN $\widetilde{\blambda}_{ \mathrm{RQ1}}^{(i)} = (\widetilde x_i^* \widetilde A_1 \widetilde x_i, \ldots, \widetilde x_i^* \widetilde A_d \widetilde x_i), \quad i = 1,\ldots,n$.
      \ELSIF{opt $=$ RQ$2$}
        \RETURN $\widetilde{\blambda}_{ \mathrm{RQ2}}^{(i)} = (\widetilde y_i^* \widetilde A_1 \widetilde x_i, \ldots, \widetilde y_i^* \widetilde A_d \widetilde x_i), \quad i = 1,\ldots,n$.
      \ENDIF
    \end{algorithmic}
    \end{algorithm}

    The random vector $\mu \in \CC^d$ in Line $1$ of Algorithm~\ref{alg:RJEA} can be generated by drawing a complex Gaussian random vector $g$ (that is, the real and imaginary parts of its entries are i.i.d $\mathcal{N}(0,1/2)$) and setting $\mu = g / \|g\|_2$. Note that the matrix $\widetilde Y = \widetilde X^{-*}$ contains the left eigenvectors of $\widetilde A(\mu)$. The normalization conditions in Line $3$ correspond to what is computed by standard eigenvalue solvers, such as {\tt CGEEVX} in LAPACK~\cite{LAPACK}. 
    In the following, we provide details on the key quantities $\widetilde{\blambda}_{\text{RQ$1$}}^{(i)}$ and  $\widetilde{\blambda}_{\text{RQ$2$}}^{(i)}$ of Algorithm~\ref{alg:RJEA}.
\smallskip

\noindent\textbf{One-sided Rayleigh quotient}\quad  From each normalized common (right) eigenvector $x_i$ of $\cal A$, we can compute the \emph{(one-sided) Rayleigh quotient}
\begin{equation}\label{eq:1sRQ}
    \blambda^{(i)}_{\mathrm{RQ1}}(x_i,{\cal A}):=(x_i^*A_1x_i, \ldots, x_i^*A_dx_i).
\end{equation}
In the absence of noise, the quantity $\blambda^{(i)}_{\mathrm{RQ1}}(x_i,{\cal A})$ computed in Line $5$ of Algorithm~\ref{alg:RJEA} equals, by Theorem~\ref{thm:theorem2HD}, the joint eigenvalue $\blambda^{(i)}$ of $\cal A$ with probability $1$. In the presence of errors and noise, if $\widetilde x_i=x_i+ \epsilon \Delta x_i $ is an approximation to the exact common eigenvector $x_i$ and we apply it to $\widetilde A_k=A_k+ \epsilon E_k$, then for each component $\lambda_k^{(i)}$ of $\blambda^{(i)}$,
\[
\widetilde\blambda^{(i)}_{\mathrm{RQ1}}(\widetilde x_i,\widetilde{\cal A})_k-\lambda_k^{(i)}= \widetilde x_i^*(A_k-\lambda_k^{(i)}I)\widetilde x_i
= \left(\widetilde x_i^*(A_k-\lambda_k^{(i)}I)\Delta x_i + \widetilde x_i^*E_k\widetilde x_i\right)\epsilon.
\]
A rough error estimate is
\begin{equation}\label{eq:err_1sRQ}
|\widetilde\blambda^{(i)}_{\mathrm{RQ1}}(\widetilde x_i,\widetilde{\cal A})_k-\lambda_k^{(i)}|\le  \left(\|(A_k-\lambda_k^{(i)}I)\Delta x_i\|_2 + \|E_k\|_2\right)\epsilon.
\end{equation}
In Section \ref{sec:main}, we will derive a more refined bound.
\smallskip

\noindent\textbf{Two-sided Rayleigh quotient}\quad From each pair of left and right common eigenvectors $y_i,x_i$ of $\cal A$, we can compute the \emph{two-sided Rayleigh quotient}
\begin{equation}\label{eq:2SRQ}
\blambda_{\mathrm{RQ2}}^{(i)}(x_i,y_i,{\cal A}):=\left(\frac{y_i^*A_1 x_i}{y_i^* x_i}, \ldots, \frac{y_i^*A_d x_i}{y_i^* x_i}\right) = \left( y_i^*A_1 x_i, \ldots,y_i^*A_d x_i \right),
\end{equation}
using our normalization assumptions on $X$ and $Y$.
In the absence of noise, the quantity $\blambda^{(i)}_{\mathrm{RQ2}}(x_i,y_i,{\cal A})$  computed in Line $7$ of Algorithm~\ref{alg:RJEA} is again the joint eigenvalue $\blambda^{(i)}$ of $\cal A$ with probability $1$. 

If $\widetilde x_i=x_i+ \epsilon \Delta x_i $, $\widetilde y_i=y_i+ \epsilon \Delta y_i$ are approximations to the right/left common eigenvectors and $\widetilde A_k=A_k+ \epsilon  E_k$, then for each component $\lambda_k^{(i)}$ of $\blambda^{(i)}$,
\[
\widetilde\blambda^{(i)}_{\mathrm{RQ2}}(\widetilde x_i, \widetilde y_i, \widetilde{\cal A})_k-\lambda_k^{(i)}= \frac{\epsilon}{\widetilde y_i^*\widetilde x_i}(\widetilde y_i^*E_k\widetilde x_i) + \frac{\epsilon^2}{\widetilde y_i^*\widetilde x_i}\Delta y_i^*(A_k-\lambda_k^{(i)}I)\Delta x_i ,\]
yielding a rough estimate
\begin{equation}\label{eq:err2SRQ}
|\widetilde\blambda_{\mathrm{RQ2}}(\widetilde x_i,\widetilde y_i,\widetilde A_k)|\le \|E_k\|_2 \|y_i\|_2 \epsilon +  |\Delta y_i^*(A_k-\lambda_k^{(i)}I)\Delta x_i|\epsilon^2.
\end{equation}
Comparing~\eqref{eq:err2SRQ} to \eqref{eq:err_1sRQ}, we see that the error of the two-sided Rayleigh quotient depends on the condition number of a joint eigenvalue. On the other hand, the second term in \eqref{eq:err2SRQ} involves $\Delta x_i$ and $\Delta y_i$. For sufficiently small $\epsilon$, if we know that $\|\Delta x_i\|_2$ and $\|\Delta y_i\|_2$ are independent of $\epsilon$, this term should be smaller
than the first term in \eqref{eq:err_1sRQ}. We will present a more detailed comparison between $\widetilde\blambda_{\mathrm{RQ2}}^{(i)}$ and $\widetilde\blambda_{\mathrm{RQ1}}^{(i)}$ in Section~\ref{sec:main}.
\smallskip

If ${\cal A}$ is not simultaneously diagonalizable, it follows from the discussion at the end of Section~\ref{sec:commfam} that  $A(\mu)$ is not diagonalizable.
In such case there is no invertible matrix $X$ of right eigenvectors for $A(\mu)$.
Under perturbations, such $X$ almost surely exists but can be expected to be extremely ill-conditioned, so there is little hope of getting
highly accurate approximations for all eigenvalues of ${\cal A}$ from~\eqref{eq:2SRQ}. However, we will %numerically 
show in Example~\ref{ex:ex5} that even in such an extreme situation one still 
obtains meaningful approximations. We also note that when \eqref{eq:2SRQ} cannot be computed numerically, it is more reliable to resort to \eqref{eq:1sRQ}, which is well defined even for a singular $X$.

%%%%%%%%%%%%%%%%%%%%%%%%%%%%%%%%%%%%%%%%%%%%%%%%%%%%%%%%%%%%%%%%%%%%%%%%%%%%%%%%%%%%%%%%%%%
\section{Rayleigh quotient approximation error}\label{sec:main}

In this section, we analyze the quality of the Rayleigh quotient approximations produced by Algorithm~\ref{alg:RJEA} applied to a nearly commuting family $\mathcal{\widetilde A}$. As in Algorithm~\ref{alg:RJEA}, we will assume that the eigenvector matrix $\widetilde X$ of $\widetilde A(\mu)$ is normalized such that its columns have norm $1$. This scaling gives a quasi-optimal condition number for $X$; see~\cite[Theorem 2]{Demmel83}. In the (unlikely) case that $\widetilde A(\mu)$ has a multiple eigenvalue, we will assume that this eigenvalue is semisimple and $X$ contains an orthonormal basis of the corresponding eigenspace.

Our analysis consists of two parts. In Section~\ref{subsec:structural_bound} we will derive deterministic bounds that depend on the particular choice of $\mu$ and make no assumption on the perturbation. In Sections~\ref{subsec:prob_bound} and~\ref{subsec:gaussian_cond} we will derive probabilistic bounds assuming that $\mu$ is a random vector from the uniform distribution on the unit sphere in $\mathbb C^d$ and considering a random model for the perturbations.

\subsection{Structural bounds}\label{subsec:structural_bound}

Lemma~\ref{lem:ss_general} below provides error bounds for the one- and two-sided Rayleigh quotients returned by Algorithm~\ref{alg:RJEA}. We recall, see Lemma~\ref{lem:semisimple_partition} and the discussion in Section~\ref{sec:commfam}, that we can choose the right/left eigenbases $X_1,Y_1 \in \mathbb{C}^{n \times p}$ associated with a semisimple joint eigenvalue $\blambda$
such that there are invertible matrices $X= [X_1\ X_2] \in \mathbb{C}^{n \times n}$, $Y= [Y_1\ Y_2] \in  \mathbb{C}^{n \times n}$ satisfying
\begin{equation} \label{eq:choicexy}
X_1^* X_1 = I_p, \qquad Y^*X = I_n, \qquad 
     Y^*A_kX = \begin{bmatrix}
       \lambda_k I_p & 0 \\
        0 & A^{(k)}_{22}
    \end{bmatrix},\quad k=1,\ldots,d,
\end{equation}
for some $A_{22}^{(k)}\in\mathbb{C}^{(n-p)\times (n-p)}$.
\begin{lemma}\label{lem:ss_general}
For a commuting family $\mathcal{A} = \{A_k \in \mathbb{C}^{n \times n}\}_{k=1}^{d}$,
let $\blambda=(\lambda_1,\ldots,\lambda_d)$ be a semisimple joint eigenvalue 
of multiplicity $p$, and let $X= [X_1\ X_2] \in \mathbb{C}^{n \times n}$, $Y= [Y_1\ Y_2] \in  \mathbb{C}^{n \times n}$ be such that~\eqref{eq:choicexy} is satisfied.
Suppose that $\mu \in \mathbb{C}^d$, $\|\mu\|_2 = 1$, is such that $\lambda(\mu)$ is not an eigenvalue of $A_{22}(\mu)$. Consider a perturbed family 
    $\mathcal{\tilde A} = \{A_k  + \epsilon E_k\}_{k=1}^{d}$ with $\|E_1\|_F^2 + \cdots + \|E_d\|_F^2 = 1$ and
    $\epsilon>0$, such that the $p$ eigenvalues of $\widetilde A(\mu)$ closest to $\lambda(\mu)$ remain semisimple.
    Suppose that $\widetilde X_1=[\widetilde x_1\ \cdots\ \widetilde x_p]$ and 
    $\widetilde Y_1=[\widetilde y_1\ \cdots\ \widetilde y_p]$ contain the corresponding right/left eigenvectors
    of $\widetilde A(\mu)$, normalized such that $\widetilde Y_1^*\widetilde X_1=I_p$ and the columns of $\widetilde X_1$ have norm $1$.
    %and $\widetilde Y_1^* \widetilde A(\mu) \widetilde X_1$ is diagonal, 
    Then the inequalities
    \begin{align}
    |\widetilde x_i^* \widetilde A_k \widetilde x_i - \lambda_k| 
    & \le \left(\|E_k\|_2+ \|X_2\|_2\|Y_2\|_2\|D_k(\mu)\|_2\right)\epsilon+ 
    \mathcal{O}(\epsilon^2), \label{eqlem:cluster1s}\\
    |\widetilde y_i^* \widetilde A_k \widetilde x_i -\lambda_k| &\le \|\widetilde y_i\|_2 \|E_k\|_2\,\epsilon  + 
    \mathcal{O}(\epsilon^2) \label{eqlem:cluster}
   \end{align}
   hold for every $i=1,\ldots,p$ and $k=1,\ldots,d$, 
    where 
    \begin{equation}
     D_k(\mu) =\big(A_{22}^{(k)}-\lambda_k I_{n-p}\big)\big(
        A_{22}(\mu)-\lambda(\mu) I_{n-p}\big)^{-1}. \label{eqlem:Dmucluster}  
    \end{equation}
\end{lemma}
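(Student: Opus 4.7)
The strategy is to perform a first-order perturbation analysis of the invariant subspace of $A(\mu)$ associated with $\lambda(\mu)$, and then evaluate both Rayleigh quotients in the coordinates supplied by \eqref{eq:choicexy}. After changing basis by $X,Y$, the unperturbed matrices take the block-diagonal form $Y^*A_kX=\diag(\lambda_k I_p, A_{22}^{(k)})$, and the perturbation $E(\mu)=\sum_k\mu_k E_k$ has $(2,1)$-block $F_{21}:=Y_2^*E(\mu)X_1$. The hypothesis $\lambda(\mu)\notin\sigma(A_{22}(\mu))$ makes $A_{22}(\mu)-\lambda(\mu)I_{n-p}$ invertible, so standard invariant-subspace perturbation theory applies.

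Next I would parametrize the perturbed right basis as $\widetilde X_1=(X_1+X_2R)Z$ with $R=O(\epsilon)$ and $Z\in\CC^{p\times p}$ fixed by the normalization $\|\widetilde x_i\|_2=1$. Substituting into $\widetilde A(\mu)\widetilde X_1=\widetilde X_1\widetilde M$ and isolating first-order terms yields the Sylvester equation $\bigl(A_{22}(\mu)-\lambda(\mu)I_{n-p}\bigr)R=-\epsilon F_{21}+O(\epsilon^2)$, hence
\[
  R=-\epsilon\bigl(A_{22}(\mu)-\lambda(\mu)I_{n-p}\bigr)^{-1}F_{21}+O(\epsilon^2).
\]
The biorthogonality $\widetilde Y_1^*\widetilde X_1=I_p$ combined with the analogous left-basis expansion pins down $\widetilde Y_1=Y_1Z^{-*}+O(\epsilon)$, and the crucial identity $\widetilde Y_1^*X_2=O(\epsilon)$ follows because $Y_1^*X_2=0$ from $Y^*X=I_n$. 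Reading $A_kX_1=\lambda_kX_1$ and $A_kX_2=X_2A_{22}^{(k)}$ off \eqref{eq:choicexy} then gives
\[
  (A_k-\lambda_kI)\widetilde X_1=X_2\bigl(A_{22}^{(k)}-\lambda_kI_{n-p}\bigr)RZ=-\epsilon\,X_2D_k(\mu)F_{21}Z+O(\epsilon^2).
\]

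For the two-sided quotient I premultiply by $\widetilde Y_1^*$: the product $\widetilde Y_1^*X_2\cdot(\cdots)$ is $O(\epsilon)\cdot O(\epsilon)=O(\epsilon^2)$, so only the direct term $\epsilon\,\widetilde y_i^*E_k\widetilde x_i$, bounded by $\epsilon\|\widetilde y_i\|_2\|E_k\|_2$, survives at first order, yielding \eqref{eqlem:cluster}. For the one-sided quotient I premultiply by $\widetilde X_1^*$: the leading contribution is the $(i,i)$-entry of $-\epsilon\,Z^*X_1^*X_2 D_k(\mu)F_{21}Z$, which I bound using $\|X_1^*X_2\|_2\le\|X_2\|_2$ (from $X_1^*X_1=I_p$), $\|F_{21}\|_2\le\|Y_2\|_2\|E(\mu)\|_2\le\|Y_2\|_2$ (Cauchy--Schwarz on $\sum_k\|E_k\|_F^2=1$), and $\|Z\|_2\le\sqrt p$ from the unit-norm columns of $\widetilde X_1$. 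This yields $\epsilon\|X_2\|_2\|Y_2\|_2\|D_k(\mu)\|_2$; adding the direct term $\epsilon\,\widetilde x_i^*E_k\widetilde x_i\le\epsilon\|E_k\|_2$ delivers \eqref{eqlem:cluster1s}.

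The main technical obstacle is the careful bookkeeping of the normalization factor $Z$, which enters the one-sided bound bilinearly. It is tamed by the observation that $\|\widetilde x_i\|_2=1$ and $X_1^*X_1=I_p$ force $\|Z\|_2\le\sqrt p$ at zeroth order, independent of the ill-conditioning of the left eigenvectors captured by $\|Z^{-1}\|_2$ (which only appears multiplied by $\|\widetilde y_i\|_2$ in the two-sided bound). Certifying that all remainders are genuinely $O(\epsilon^2)$, once the back-reaction from $\widetilde M-\lambda(\mu)I_p$ is taken into account, follows from standard invariant-subspace perturbation theory under the spectral-gap hypothesis (e.g., \cite[Ch.~V]{StewartSun}); after that the computation is essentially algebraic.
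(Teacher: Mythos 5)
Your proposal follows essentially the same route as the paper: a first-order perturbation expansion of the invariant subspace of $A(\mu)$ (Sylvester equation in the $X,Y$ coordinates), parametrization of the computed bases via a change-of-basis matrix ($Z$ in your notation, $C_1$ in the paper's), and then cancellation of first-order terms in the two-sided quotient using $Y_1^*(A_k-\lambda_kI)=0$ and $(A_k-\lambda_kI)X_1=0$. The paper phrases this via an appendix lemma (Lemma~\ref{lemma:first_order_pert_cluster}) instead of re-deriving the Sylvester equation inline, but the content is identical.

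There is, however, one bookkeeping slip in your one-sided estimate. You bound the $(i,i)$-entry of $-\epsilon\,Z^*X_1^*X_2 D_k(\mu)F_{21}Z$ using $\|Z\|_2\le\sqrt p$, which, applied to both the left and the right $Z$, yields an extra factor of $p$ that is not in \eqref{eqlem:cluster1s}. The correct observation is that the $(i,i)$-entry involves only the $i$-th column $Ze_i$, and the normalization $\|\widetilde x_i\|_2=1$ together with $X_1^*X_1=I_p$ (and $R=O(\epsilon)$) forces $\|Ze_i\|_2=1+O(\epsilon)$, not merely $\|Ze_i\|_2\le\sqrt p$. With $\|Ze_i\|_2=1+O(\epsilon)$ in place of $\|Z\|_2\le\sqrt p$, the claimed bound $\epsilon\|X_2\|_2\|Y_2\|_2\|D_k(\mu)\|_2$ follows; the paper sidesteps this entirely by keeping $\widetilde x_i$ in the leading term (writing the bound as $|\widetilde x_i^*X_2D_k(\mu)Y_2^*E(\mu)\widetilde x_i|$ and using $\|\widetilde x_i\|_2=1$ directly). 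Everything else in your argument is sound.
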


\begin{proof}
By definition, the columns of $\widetilde X_1$ form a basis for an invariant subspace %of dimension $p$ 
of $\widetilde A(\mu)$. The perturbation result of Lemma~\ref{lemma:first_order_pert_cluster} provides another basis $\widehat X_1$
of the same subspace such that
$\widehat X_1 = X_1   +\epsilon \Delta X_1 + \mathcal{O}(\epsilon^2)$ with
\begin{equation}
\Delta X_1 = - X_2 (A_{22}(\mu)-\lambda(\mu) I_{n-p})^{-1} Y_2^*E(\mu) X_1. \label{thm:pert_cluster_x}
\end{equation}
Because $\widehat X_1$ and $\widetilde X_1$ span the same subspace, there is an invertible matrix $C_1$ such that
$\widetilde X_1 = \widehat X_1 C_1$. 
%Since the columns of $X_1$ are orthonormal, this   implies that the 2-norms of the columns of $C_1$ are $1+\mathcal{O}(\epsilon)$.
Moreover, there is a basis $\widehat Y_1$ of the space spanned by the columns of $\widetilde Y_1$ such that $\widehat Y_1^*=Y_1^*+\epsilon\Delta Y_1^* + \mathcal{O}( \epsilon^2)$ with
        \[
        \Delta Y_1^* = - Y_1^* E(\mu) X_2 (A_{22}(\mu)-\lambda(\mu) I_{n-p})^{-1} Y_2^*
    \]
and $\widehat Y_1^* \widehat X_1 = I_p + \mathcal{O}(\epsilon^2)$. Note that the latter relation implies $\widetilde Y_1 = \widehat Y_1 (C_1^{-*} + \mathcal{O}(\epsilon^2))$.

Letting $e_i$ denote the $i$th unit vector of length $p$ and using that $\|\widetilde x_i\|_2=1$, we obtain 
for the one-sided Rayleigh quotient that
\begin{align*}%\label{eqlem:simple11s}
        \widetilde x_i^*\widetilde A_k \widetilde x_i - \lambda_k & = 
        \epsilon \widetilde x_i^*E_k\widetilde x_i + \widetilde x_i^*\big(A_k-\lambda_kI\big)\widetilde x_i \\
        & = \epsilon \widetilde x_i^*E_k\widetilde x_i + \widetilde x_i^* \big(A_k-\lambda_kI\big)(X_1+\epsilon \Delta X_1)C_1e_i + \mathcal{O}(\epsilon^2) \\
        & = \epsilon \widetilde x_i^*E_k\widetilde x_i + \epsilon \widetilde x_i^* \big(A_k-\lambda_kI\big) \Delta X_1\, C_1e_i + \mathcal{O}(\epsilon^2) \\
        & = \epsilon \widetilde x_i^*E_k\widetilde x_i - \epsilon \widetilde x_i^* \big(A_k-\lambda_kI\big)  X_2 (A_{22}(\mu)-\lambda(\mu) I_{n-p})^{-1} Y_2^*E(\mu)  \widetilde x_i  + \mathcal{O}(\epsilon^2) \\
        & = \epsilon \widetilde x_i^*E_k\widetilde x_i -\epsilon \widetilde x_i^*X_2D_k(\mu)Y_2^*E(\mu)\widetilde x_i + \mathcal{O}(\epsilon^2),
    \end{align*}
    where we used $\big(A_k-\lambda_kI\big)X_1=0$ for the third %equality 
    and  
$\big(A_k-\lambda_kI\big)X_2=X_2\big(A_{22}^{(k)}-\lambda_kI_{n-p}\big)$ for the last equality. It follows from $Y_2^* X_2 = I_{n-p}$ and
$\|E(\mu)\|_2\le 1$  that
\begin{equation*}|\widetilde x_i^* \widetilde A_k \widetilde x_i - \lambda_k^{(i)}| 
\le \left(\|E_k\|_2+ \|X_2\|_2\|Y_2\|_2\|D_k(\mu)\|_2\right)\epsilon+  \mathcal{O}(\epsilon^2),\label{eqlem:cldelta}
\end{equation*} which proves
 \eqref{eqlem:cluster1s}.

For the two-sided Rayleigh quotient, we get \eqref{eqlem:cluster} from
\begin{align*}
        \widetilde y_i^*\widetilde A_k \widetilde x_i - \lambda_k^{(i)} &= 
        \epsilon \widetilde y_i^*E_k\widetilde x_i + \widetilde y_i^*\big(A_k-\lambda_kI\big)\widetilde x_i\\
%\widetilde y_i^*\big(A_k-\lambda_k^{(i)}I\big)\widetilde x_i 
&= \epsilon \widetilde y_i^*E_k\widetilde x_i +
e_i^*C_1^{-*}(Y_1^*+\epsilon \Delta Y_1^*)\big(A_k-\lambda_kI\big)(X_1+\epsilon \Delta X_1)C_1e_i + \mathcal{O}(\epsilon^2) \\
& = \epsilon \widetilde y_i^*E_k\widetilde x_i +  \mathcal{O}(\epsilon^2)
 \end{align*}
 where we used $\widetilde y_i^*\widetilde x_i=1$, 
$Y_1^*(A_k -\lambda_kI) = 0$, and $ (A_k -\lambda_kI)X_1 = 0$.% This implies~\eqref{eqlem:cluster}. 
\end{proof}

The following theorem turns the error bounds of Lemma~\ref{lem:ss_general} for individual eigenvalue components
into error bounds for the whole vector containing a joint eigenvalue.
For one-sided Rayleigh quotients, we additionally assume diagonalizability in order to obtain a convenient expression for $\|D_k(\mu)\|_2$.

\begin{theorem}\label{thm:semisimple}
    Under the notation and assumptions of Lemma \ref{lem:ss_general}, the following statements hold:
\begin{enumerate}
    \item[1)] The two-sided Rayleigh quotients
    $\widetilde{\blambda}_{\mathrm{RQ2}}^{(i)}:=
        (\widetilde y_i^* \widetilde A_1 \widetilde x_i, \ldots, \widetilde y_i^* \widetilde A_d \widetilde x_i)
    \in \mathbb{C}^d$ satisfy
    \begin{equation}\label{eqdet:Asemisimple}
        \|\widetilde{\blambda}_{\mathrm{RQ2}}^{(i)} - \blambda\|_2 \le \| \widetilde y_i \|_2 \epsilon 
         + \mathcal{O}(\epsilon^2), \qquad i=1,\ldots,p.
    \end{equation}
\item[2)] 
Additionally, assume that $\mathcal{A}$ is diagonalizable and let
    $X= [X_1\ X_2]$, $Y= [Y_1\ Y_2]$ be such that $Y^*A_kX$ is diagonal for $k = 1,\ldots,d$.
    Then the one-sided Rayleigh quotients
        $\widetilde{\blambda}_{\mathrm{RQ1}}^{(i)}:=
        (\widetilde x_i^* \widetilde A_1 \widetilde x_i, \ldots, \widetilde x_i^* \widetilde A_d \widetilde x_i)
    \in \mathbb{C}^d$
    satisfy
    \begin{equation}\label{eqdet:Asimple1s}
        \|\widetilde{\blambda}_{\mathrm{RQ1}}^{(i)} - \blambda\|_2 \le 
        \Big(1 + \frac{\sqrt{d}\|X_2\|_2\|Y_2\|_2}{d(\mu)}\Big)\epsilon 
        + \mathcal{O}(\epsilon^2), \qquad i=1,\ldots,p,
    \end{equation}
    where
   \[ 
    d(\mu):=\min_{\blambda^{(j)} \neq \blambda}\left(\frac{|\lambda_j(\mu) - \lambda(\mu)|}{\|\blambda^{(j)}- \blambda\|_2}\right),
    \]
    and $\blambda^{(1)}, \ldots, \blambda^{(n)}$ denote the joint eigenvalues of $\mathcal A$.
\end{enumerate}    
\end{theorem}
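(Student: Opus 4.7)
The plan is to promote the componentwise bounds \eqref{eqlem:cluster} and \eqref{eqlem:cluster1s} of Lemma~\ref{lem:ss_general} to bounds on the full vector $\widetilde{\blambda}^{(i)} - \blambda \in \mathbb{C}^d$ by taking an $\ell_2$-sum over $k=1,\ldots,d$, and then to exploit the normalization $\sum_k \|E_k\|_F^2 = 1$ together with the simultaneous diagonalizability hypothesis (only in part~2) to simplify the resulting constants.

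For part~1), I would square \eqref{eqlem:cluster}, sum over $k$, and use $\|E_k\|_2 \le \|E_k\|_F$ to obtain
\[
\|\widetilde{\blambda}_{\mathrm{RQ2}}^{(i)} - \blambda\|_2^2 \le \|\widetilde y_i\|_2^2 \Big(\sum_{k=1}^d \|E_k\|_F^2\Big)\epsilon^2 + \mathcal{O}(\epsilon^3) = \|\widetilde y_i\|_2^2 \,\epsilon^2 + \mathcal{O}(\epsilon^3),
\]
which gives \eqref{eqdet:Asemisimple} after taking square roots. For part~2), Minkowski's inequality applied in $\ell_2$ over $k$ to \eqref{eqlem:cluster1s} splits the bound into a perturbation term and a structural term:
\[
\|\widetilde{\blambda}_{\mathrm{RQ1}}^{(i)} - \blambda\|_2 \le \Big(\sum_k \|E_k\|_2^2\Big)^{1/2}\epsilon + \|X_2\|_2\|Y_2\|_2\Big(\sum_k \|D_k(\mu)\|_2^2\Big)^{1/2}\epsilon + \mathcal{O}(\epsilon^2).
\]
The first sum is at most $1$ thanks to $\|E_k\|_2 \le \|E_k\|_F$ and the normalization on $E_k$. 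The main task is thus to show $\sum_k \|D_k(\mu)\|_2^2 \le d/d(\mu)^2$.

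The key step, and the one I expect to require the most care, is the analysis of $D_k(\mu)$ under the diagonalizability assumption. Because $\mathcal{A}$ is simultaneously diagonalizable, we may pick $X$, $Y$ so that every $Y^*A_k X$ (and hence every block $A_{22}^{(k)}$) is diagonal, with diagonal entries given by the components $\lambda_k^{(j)}$ of the joint eigenvalues $\blambda^{(j)} \neq \blambda$. Consequently $A_{22}(\mu)-\lambda(\mu)I_{n-p}$ is diagonal with entries $\lambda_j(\mu)-\lambda(\mu)$, and $D_k(\mu)$ from \eqref{eqlem:Dmucluster} is diagonal with entries $(\lambda_k^{(j)}-\lambda_k)/(\lambda_j(\mu)-\lambda(\mu))$. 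Hence
\[
\|D_k(\mu)\|_2 = \max_{\blambda^{(j)} \neq \blambda} \frac{|\lambda_k^{(j)}-\lambda_k|}{|\lambda_j(\mu)-\lambda(\mu)|} \le \max_{\blambda^{(j)} \neq \blambda} \frac{\|\blambda^{(j)}-\blambda\|_2}{|\lambda_j(\mu)-\lambda(\mu)|} \le \frac{1}{d(\mu)},
\]
where the first inequality uses $|\lambda_k^{(j)}-\lambda_k| \le \|\blambda^{(j)}-\blambda\|_2$ and the second is the definition of $d(\mu)$. Summing over the $d$ values of $k$ gives $\sum_k \|D_k(\mu)\|_2^2 \le d/d(\mu)^2$, and substituting yields \eqref{eqdet:Asimple1s}. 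The delicate point is recognizing that the diagonalizability hypothesis is exactly what makes $D_k(\mu)$ diagonal in a single basis that is shared across $k$, which is what allows the scalar ratios $(\lambda_k^{(j)}-\lambda_k)/(\lambda_j(\mu)-\lambda(\mu))$ to be controlled uniformly by $d(\mu)$ as defined.
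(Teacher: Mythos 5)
Your proof is correct and follows essentially the same route as the paper's: both derive the vector bounds from the componentwise estimates of Lemma~\ref{lem:ss_general} by $\ell_2$-summation, and both reduce the one-sided bound to the key observation that the diagonalizability hypothesis makes each $D_k(\mu)$ diagonal in a basis shared across $k$, so that $\|D_k(\mu)\|_2 \le 1/d(\mu)$. The paper states this more tersely (``summing the squared errors''); your explicit use of Minkowski's inequality simply spells out the same calculation.
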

\begin{proof}
    The first result~\eqref{eqdet:Asemisimple} is obtained from~\eqref{eqlem:cluster} by summing the squared errors  and using $\|E_1\|_F^2 + \cdots + \|E_d\|_F^2 = 1$.
    
    The additional assumption for the second result implies that the matrix $D_k(\mu)$~\eqref{eqlem:Dmucluster} is diagonal and, hence,
\[\|D_k(\mu)\|_2=\max_{\blambda^{(i)} \neq  \blambda}\left(\frac{|\lambda_k^{(i)}-\lambda_k|}{|\lambda_i(\mu)-\lambda(\mu)|}\right)
\le \max_{\blambda^{(i)} \neq  \blambda}\left(\frac{\|\blambda^{(i)}- \blambda\|_2}{|\lambda_i(\mu)- \lambda(\mu)|}\right)=\frac{1}{d(\mu)}.\]
Now,~\eqref{eqdet:Asimple1s} is again obtained from~\eqref{eqlem:cluster1s} by summing the squared errors. 
\end{proof}

The bounds of Theorem~\ref{thm:semisimple} contain three factors that require explanation and additional considerations:
\begin{enumerate}
 \item The factor $\|\widetilde y_i\|_2$ in~\eqref{eqdet:Asemisimple} is the condition number of the $i$th eigenvalue of $\widetilde A(\mu)$. For $p = 1$, this quantity converges to the condition number of $\blambda$ as $\epsilon \to 0$; see Definition~\ref{def:eigenvalue_condition_joint_semisimple} in Section~\ref{sec:eigenvalue_condition_number}.
For $p>1$, the interpretation of this quantity is much more subtle; see Section~\ref{subsec:gaussian_cond} below.

\item The factor $\|X_2\|_2\|Y_2\|_2$ in~\eqref{eqdet:Asimple1s} quantifies how well $\mathcal A$ can be diagonalized because it is bounded by the condition number of the matrix $X$ diagonalizing $\mathcal{A}$. In the context of the bound~\eqref{eqlem:cluster1s} from Lemma~\ref{lem:ss_general}, $X$ only needs to block diagonalize
$\mathcal A$ and, in turn, $\|X_2\|_2\|Y_2\|_2$ can be chosen to be significantly smaller. In particular, for $p = 1$ we may assume, without loss of generality, that $x_1 = e_1$ 
and $y_1 = \big[ \genfrac{}{}{0pt}{}{1}{v} \big]$ for some $v \in \mathbb C^{n-1}$. 
Then we can choose $X_2 = \big[ \genfrac{}{}{0pt}{}{-v^*}{I_{n-1}}\big]$, for which we have
$\|X_2\|_2\|Y_2\|_2 = \|y_1\|_2$. In other words, $\|X_2\|_2\|Y_2\|_2$ also becomes the condition number of $\blambda$.
\item For unfortunate choices of $\mu$, the quantity $d(\mu)$ in~\eqref{eqdet:Asimple1s} becomes arbitrarily small. In order to keep $1/d(\mu)$ bounded, it is essential to choose $\mu$ randomly; we will analyze this choice in the next section.
\end{enumerate}

\subsection{A probabilistic bound for one-sided Rayleigh quotients}\label{subsec:prob_bound}

In order to investigate the quantity $d(\mu)$ in the error bound~\eqref{eqdet:Asimple1s} for one-sided Rayleigh quotients, we now assume that
$\mu \sim \mathrm{Unif}(\mathbb{S}_{\mathbb{C}}^{d-1})$, i.e., $\mu$ is from the uniform distribution on the unit sphere in $\mathbb C^d$
as chosen in Algorithm~\ref{alg:RJEA}.
For this purpose, we will use anti-concentration results.
\begin{lemma}[{\cite[Lemma 5.2]{banks2022global}}]\label{lemma:anti_concentration}
    Let $\mu \sim \mathrm{Unif}(\mathbb{S}_{\mathbb{C}}^{d-1})$ and $v \in \mathbb{S}_{\mathbb{C}}^{d-1}$. Then
    $\Prob\left(|\mu^*v| \leq t/ \sqrt{d-1} \right) \leq t^2$
    holds for all $t \in [0,1]$.
\end{lemma}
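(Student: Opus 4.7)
The plan is to reduce the anti-concentration question to a statement about a single coordinate of $\mu$, and then to exploit the fact that the squared modulus of a coordinate of a uniform point on the complex unit sphere follows a Beta distribution whose behaviour near zero is explicit.

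First, I would use the unitary invariance of $\mathrm{Unif}(\mathbb{S}_{\mathbb{C}}^{d-1})$. Choosing a unitary $U \in \mathbb{C}^{d \times d}$ with $Uv = e_1$, the quantity $|\mu^*v| = |(U\mu)^*e_1|$ has the same distribution as $|\mu_1|$, because $U\mu$ is again uniform on $\mathbb{S}_{\mathbb{C}}^{d-1}$. So without loss of generality $v = e_1$ and it suffices to bound $\Prob(|\mu_1| \le t/\sqrt{d-1})$. This reduction is routine.

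Second, I would identify the distribution of $|\mu_1|^2$. Using the standard realization $\mu = g/\|g\|_2$ with $g \in \mathbb{C}^d$ having i.i.d.\ standard complex Gaussian entries (real and imaginary parts $\mathcal{N}(0,1/2)$), we have $|\mu_1|^2 = |g_1|^2 / \sum_{k=1}^{d} |g_k|^2$. Since each $|g_k|^2$ is i.i.d.\ $\mathrm{Exp}(1) = \mathrm{Gamma}(1,1)$, the classical ratio-of-gammas identity yields $|\mu_1|^2 \sim \mathrm{Beta}(1, d-1)$, with cumulative distribution function $F(s) = 1 - (1-s)^{d-1}$ on $[0,1]$. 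The only real subtlety in the whole proof lives in this step: one must respect the complex-versus-real Gaussian normalization, which gives Beta parameters $(1,d-1)$ in the complex case, as opposed to $(1/2,(d-1)/2)$ for a real sphere of the same real dimension.

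Third, the bound follows from Bernoulli's inequality. For $t \in [0,1]$ and $d \ge 2$ (the case $d=1$ being vacuous as the sphere is a circle and $|\mu^*v| = 1$ almost surely), the argument $s = t^2/(d-1)$ lies in $[0,1]$, so
\[
\Prob\bigl(|\mu^*v| \le t/\sqrt{d-1}\bigr) \;=\; F\bigl(t^2/(d-1)\bigr) \;=\; 1 - \bigl(1 - t^2/(d-1)\bigr)^{d-1} \;\le\; 1 - \bigl(1 - (d-1)\cdot t^2/(d-1)\bigr) \;=\; t^2,
\]
which is the desired bound. There is no genuine obstacle in this proof; the entire argument is three short lines once the Beta identification is in place.
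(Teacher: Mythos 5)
Your proof is correct. Note that the paper does not actually prove this lemma---it imports it verbatim as~\cite[Lemma 5.2]{banks2022global}---so there is no in-paper argument to compare against. Your three-step argument (unitary invariance to reduce to a single coordinate, the identification $|\mu_1|^2\sim\mathrm{Beta}(1,d-1)$ via the ratio-of-exponentials construction, and Bernoulli's inequality applied to the explicit CDF $1-(1-s)^{d-1}$) is the standard and, as far as I can tell, essentially the same route taken in the cited source. The one place where carelessness could creep in---using the complex-sphere Beta parameters $(1,d-1)$ rather than the real-sphere parameters $(1/2,(d-1)/2)$ for a sphere of the same real dimension---you flag explicitly and handle correctly; you also correctly check that $s=t^2/(d-1)\in[0,1]$ so that the Beta CDF formula applies. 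The argument is complete and self-contained.
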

 
\begin{lemma}\label{lem:probabilistic_bounds}
For $\mu \sim \mathrm{Unif}(\mathbb{S}_{\mathbb{C}}^{d-1})$, the quantity $d(\mu)$ defined in Theorem~\ref{thm:semisimple}
satisfies
    \begin{equation*}\label{lemeq:d_mu_prob_bound}
        \Prob\big(d(\mu) \ge C\big)  \ge 1-  (n-p) (d-1)C^2.
    \end{equation*}
\end{lemma}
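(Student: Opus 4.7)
The plan is to express $d(\mu)$ in terms of inner products against the differences $\blambda^{(j)} - \blambda$, apply the anti-concentration lemma to each such inner product, and then use a union bound over the finitely many $j$ with $\blambda^{(j)} \neq \blambda$.

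More concretely, by the definition $\lambda_i(\mu) = \mu_1 \lambda_1^{(i)} + \cdots + \mu_d \lambda_d^{(i)}$, we have
\[
\lambda_j(\mu) - \lambda(\mu) = \mu^T\bigl(\blambda^{(j)} - \blambda\bigr),
\]
so that, setting $u_j := \overline{(\blambda^{(j)} - \blambda)}/\|\blambda^{(j)} - \blambda\|_2 \in \mathbb{S}_{\mathbb{C}}^{d-1}$, the generic ratio in the definition of $d(\mu)$ equals $|\mu^{*} u_j|$. This is the form required by Lemma~\ref{lemma:anti_concentration}.

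Next, I would apply Lemma~\ref{lemma:anti_concentration} to each $j$ with $\blambda^{(j)} \neq \blambda$. Setting $t = C\sqrt{d-1}$ (and noting the bound is vacuous for $t > 1$), we obtain
\[
\Prob\bigl(|\mu^{*} u_j| \le C\bigr) \le (d-1) C^2.
\]
Since $\blambda$ has algebraic multiplicity $p$, there are at most $n-p$ indices $j$ with $\blambda^{(j)} \neq \blambda$. A union bound then yields
\[
\Prob\bigl(d(\mu) < C\bigr) = \Prob\Big(\bigcup_{j:\,\blambda^{(j)}\neq \blambda}\{|\mu^{*} u_j| < C\}\Big) \le (n-p)(d-1)C^2,
\]
and taking the complement gives the claimed bound.

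There is no real obstacle here; the only mild subtlety is that $\mu^T(\blambda^{(j)} - \blambda)$ is a bilinear pairing rather than the Hermitian inner product appearing in Lemma~\ref{lemma:anti_concentration}. This is harmless because the uniform distribution on $\mathbb{S}_{\mathbb{C}}^{d-1}$ is invariant under entrywise conjugation, so $|\mu^T v|$ and $|\mu^{*} v|$ have the same distribution for every fixed $v$; alternatively, one simply replaces $v$ by $\bar v$ as in the formula for $u_j$ above.
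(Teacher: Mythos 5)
Your proof is correct and follows essentially the same route as the paper: rewrite the ratio in $d(\mu)$ as a normalized inner product, apply Lemma~\ref{lemma:anti_concentration} to each term, and union bound over the at most $n-p$ distinct joint eigenvalues. You are in fact slightly more careful than the paper's own proof, which silently writes $\mu^*(\blambda^{(j)}-\blambda)$ where the definition of $\lambda_j(\mu)$ actually gives the bilinear pairing $\mu^T(\blambda^{(j)}-\blambda)$; your observation that the two have the same distribution under $\mathrm{Unif}(\mathbb{S}_{\mathbb{C}}^{d-1})$ (or, equivalently, replacing $v$ by $\bar v$) cleanly closes that small gap.
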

\begin{proof}
From the definition of $d(\mu)$, we obtain that
   \begin{align*}
       \Prob\big(d(\mu) \ge C\big) &= \Prob\Big(\min_{\blambda^{(j)} \not= \blambda} \frac{|\lambda_j(\mu) -  \lambda(\mu)|}{\|\blambda^{(j)}-  \blambda\|_2} \ge C\Big) \nonumber\\
       &= \Prob\Big(\min_{\blambda^{(j)} \not= \blambda} \frac{|\mu^*(\blambda^{(j)} -  \blambda)|}{\|\blambda^{(j)} -  \blambda\|_2} \ge C\Big) \ge 1 - (n-p) (d-1)C^2.
   \end{align*} 
   The last inequality uses Lemma~\ref{lemma:anti_concentration} combined with the union bound, noting that at most $n-p$ eigenvalues are different
   from $\blambda$.
\end{proof}

We now use Lemma~\ref{lem:probabilistic_bounds} to turn the result of Theorem \ref{thm:semisimple} 2) into a probabilistic bound.
\begin{theorem}\label{cor:prob_bound_semisimple}
    Under the notation and assumptions of Theorem~\ref{thm:semisimple} 2), let 
    %$\mathcal{A}$ be simultaneously diagonalizable and
    $\mu\sim \mathrm{Unif}(\mathbb{S}_{\mathbb{C}}^{d-1})$.
    Then for every $R > 0$ it holds for $i=1,\ldots,p$ that
    \begin{equation}\label{eq:boundR_semisimple}
        \Prob\big( \big\|\widetilde{\blambda}_{\mathrm{RQ1}}^{(i)} - \blambda \big\|_2 \le 
        (1 + \|X_2\|_2\|Y_2\|_2R)\epsilon + \mathcal{O}(\epsilon^2) \big) \ge 1 - \frac{(n-p)(d-1)d}{R^2}.
    \end{equation}
\end{theorem}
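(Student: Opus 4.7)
The plan is to combine the deterministic bound from Theorem~\ref{thm:semisimple} 2) with the probabilistic anti-concentration estimate from Lemma~\ref{lem:probabilistic_bounds} via a simple change of variables. Since the deterministic bound reads
\[
\|\widetilde{\blambda}_{\mathrm{RQ1}}^{(i)} - \blambda\|_2 \le \Big(1 + \frac{\sqrt{d}\,\|X_2\|_2\|Y_2\|_2}{d(\mu)}\Big)\epsilon + \mathcal{O}(\epsilon^2),
\]
I want to ensure that $\sqrt{d}/d(\mu) \le R$ with high probability, i.e., that $d(\mu) \ge \sqrt{d}/R$.

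First, I would apply Lemma~\ref{lem:probabilistic_bounds} with the specific choice $C = \sqrt{d}/R$, which yields
\[
\Prob\!\left(d(\mu) \ge \sqrt{d}/R\right) \;\ge\; 1 - (n-p)(d-1)\cdot\frac{d}{R^2}.
\]
On the event $\{d(\mu) \ge \sqrt{d}/R\}$, we have $\sqrt{d}/d(\mu) \le R$, and plugging this into the deterministic bound from Theorem~\ref{thm:semisimple} 2) gives
\[
\|\widetilde{\blambda}_{\mathrm{RQ1}}^{(i)} - \blambda\|_2 \le \big(1 + \|X_2\|_2\|Y_2\|_2 R\big)\epsilon + \mathcal{O}(\epsilon^2).
\]
Monotonicity of probability then yields~\eqref{eq:boundR_semisimple}.

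There is no real obstacle here; the entire argument is a substitution $C \mapsto \sqrt{d}/R$ inside the already-established anti-concentration bound, followed by inclusion of events. The only thing to be slightly careful about is that the deterministic bound of Theorem~\ref{thm:semisimple} 2) already presumes $\lambda(\mu)$ is not an eigenvalue of $A_{22}(\mu)$ (so that $d(\mu)>0$ and the eigenvectors appearing in Lemma~\ref{lem:ss_general} are well defined); since Lemma~\ref{lem:lemma1HD} ensures this holds almost surely for $\mu\sim\mathrm{Unif}(\mathbb{S}_{\mathbb{C}}^{d-1})$, the event of interest has full measure intersection with the event in Lemma~\ref{lem:probabilistic_bounds}, and the bound transfers without modification.
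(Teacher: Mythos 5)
Your proof is correct and follows essentially the same route as the paper: you apply Lemma~\ref{lem:probabilistic_bounds} with $C=\sqrt{d}/R$ and combine it with the deterministic bound~\eqref{eqdet:Asimple1s} via monotonicity of probability, which is exactly what the paper does (the paper introduces an intermediate variable $\widetilde R = \|X_2\|_2\|Y_2\|_2 R$ but this reduces to the same choice of $C$). Your added remark about $\lambda(\mu)$ generically not being an eigenvalue of $A_{22}(\mu)$ is a sensible technical aside that the paper leaves implicit.
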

\begin{proof}
Setting $\widetilde R=\|X_2\|_2\|Y_2\|_2 R$, the inequality~\eqref{eqdet:Asimple1s} from Theorem \ref{thm:semisimple} yields
    \begin{align*}
        \Prob\big(\big\| \widetilde{\blambda}_{\mathrm{RQ1}}^{(i)} &- \blambda \big\|_2 \le 
        (1 + \widetilde R)\epsilon + \mathcal{O}(\epsilon^2) \big)\\ 
       &\geq  \Prob\left(\frac{\sqrt{d}\|X_2\|_2\|Y_2\|_2} {d(\mu)} \leq \widetilde R \right) 
         = \Prob\left(d(\mu) \geq \frac{\sqrt{d}\|X_2\|_2\|Y_2\|_2}{\widetilde R} \right)\\
        &\geq 1 - (n-p) (d-1) \frac{d\|X_2\|^2_2\|Y_2\|^2_2}{\widetilde R^2} = 1 - \frac{(n-p)(d-1)d}{R^2},&
    \end{align*}
    where the last inequality uses Lemma~\ref{lem:probabilistic_bounds}. 
\end{proof}

The inequality~\eqref{eq:boundR_semisimple} mixes a probabilistic with an asymptotic bound, and its interpretation requires some care. We first draw $\mu\sim \mathrm{Unif}(\mathbb{S}_{\mathbb{C}}^{d-1})$. Then there is a constant $C_\mu$, independent of $\epsilon$ but possibly depending on $\mu$, such that
the approximation error of the one-sided Rayleigh quotient is bounded by $(1 +\|X_2\|_2\|Y_2\|_2R)\epsilon + C_\mu \epsilon^2$ for every $\epsilon > 0$ with a failure probability that decreases proportionally to $1/R^2$ when the extra factor $R$ increases.

From the discussion after Theorem~\ref{cor:prob_bound_semisimple}, we know that, for $p = 1$, 
the approximation error of the two-sided Rayleigh quotient is bounded by $\|y_1\|_2 \epsilon + \widetilde C_\mu \epsilon^2$. As 
$\|y_1\|_2 \le\|X_2\|_2\|Y_2\|_2$, our analysis indicates that the two-sided Rayleigh quotient is more accurate for simple, well isolated eigenvalues. Because of second-order terms, this analysis does not extend to clusters of very close eigenvalues. Nevertheless, also for such cases numerical experiments reveal that the one-sided approach is not more reliable than the two-sided approach; see Example~\ref{ex:ex3} below for more details.

\subsection{On two-sided Rayleigh quotients for semisimple eigenvalues}\label{subsec:gaussian_cond}

As mentioned after Theorem~\ref{thm:semisimple}, the factor $\|\widetilde y_i\|_2$, featured in the error bound~\eqref{eqdet:Asemisimple} for two-sided Rayleigh quotients, approaches the eigenvalue condition number in the case of a simple eigenvalue. This quantity becomes much harder to interpret for a semisimple eigenvalue $\blambda$ of multiplicity $p>1$, because the perturbations may turn $\blambda$ into $p$ arbitrarily ill-conditioned perturbed eigenvalues . To better understand this phenomenon, we first note that
$\widetilde y_i=\widetilde Y_1 e_i = \widehat Y_1 C_1^{-*}e_i = Y_1C_1^{-*}e_i + \mathcal{O}(\epsilon)$
implies
\begin{equation} \label{eq:boundtildey}
 \|\widetilde y_i\|_2 \lesssim \|Y_1\|_2 \cdot \|C_1^{-*}e_i\|_2 \le \|Y_1\|_2 \cdot \|C_1^{-*}\|_2.
\end{equation}
We recall that $C_1$ is the matrix defined in the proof of Theorem~\ref{thm:semisimple},
linking the perturbed invariant subspace $\widehat X_1$, defined in~\eqref{thm:pert_cluster_x}, to the computed basis $\widetilde X_1$ of $\widetilde A(\mu)$.
Because the columns of $X_1$ are orthonormal, it follows that every column of $C_1$ has norm $1+{\cal O}(\epsilon)$. Moreover, by definition, the matrix $\widetilde Y_1^* \widetilde A(\mu) \widetilde X_1$ is diagonal and satisfies
\begin{align*}
 \widetilde Y_1^* \widetilde A(\mu) \widetilde X_1 - \lambda(\mu) I_p &= 
 \widetilde Y_1^* (\widetilde A(\mu) -\lambda(\mu) I) \widetilde X_1\\ &=
 C_1^{-1} \widehat Y_1^* (\widetilde A(\mu) -\lambda(\mu) I) \widehat X_1 C_1 
 = \epsilon C_1^{-1} Y_1^*E(\mu)X_1 C_1 + \mathcal O(\epsilon^2).
\end{align*}
Hence, in first order, $C_1$ is a matrix that diagonalizes $Y_1^*E(\mu)X_1$ and its columns have norm $1$. 

Generically, the $p$ eigenvalues of $Y_1^*E(\mu)X_1$ are mutually distinct eigenvalues and therefore this matrix is almost surely diagonalizable. Still, the matrix $C_1$ can become arbitrarily ill-conditioned. To be able to say more about $\|C_1^{-*}\|_2$, we need to impose some assumptions on the perturbations. In the following, we will consider random perturbations. Specifically, we will consider the case that $Y_1^*E(\mu)X_1$ is a (scaled) complex Gaussian random matrix.

In the following, we will consider a $p\times p$ complex Gaussian random matrix $G_p = (g_{ij})$, scaled such that the real and imaginary parts of its entries $g_{ij}$ are i.i.d. $\mathcal{N}(0, 1/2p)$. The scaling by $2p$ ensures that the norm does not grow with $p$.
\begin{lemma}[{\cite[Lemma 2.2]{gaussaincondition}}] \label{lemma:gaussian_operator_norm}
    The $p \times p$ complex Gaussian matrix $G_p$ defined above satisfies
    \[\Prob\big( \| G_p \|_2 >  (2\sqrt{2} + t) \big) \leq 2 \exp(-pt^2) \]
    for every $t > 0 $ and $R > 0$, 
\end{lemma}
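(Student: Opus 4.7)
The plan is to combine the two classical ingredients for controlling the spectral norm of a Gaussian random matrix: a bound on the expected spectral norm, and Gaussian concentration of Lipschitz functionals.

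First, I would reduce to the unscaled case by writing $G_p = G/\sqrt{p}$, where $G$ is a $p\times p$ matrix whose entries are i.i.d.\ standard complex Gaussians (real and imaginary parts independent $\mathcal{N}(0,1/2)$). The task reduces to bounding $\|G\|_2/\sqrt{p}$. A standard $\epsilon$-net argument on the unit sphere in $\mathbb{C}^p$, combined with tail bounds for quadratic forms of i.i.d.\ Gaussians, yields $\mathbb{E}\,\|G\|_2 \le 2\sqrt{2p}$, hence $\mathbb{E}\,\|G_p\|_2 \le 2\sqrt{2}$. (Slightly sharper constants are available via Gordon's or Slepian's comparison, but $2\sqrt{2}$ is all that is needed.)

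Second, I would apply Gaussian concentration. Viewing $G_p$ as a vector of $2p^2$ real coordinates, each independent $\mathcal{N}(0,1/(2p))$, the spectral norm $M \mapsto \|M\|_2$ is $1$-Lipschitz with respect to the Frobenius (i.e.\ Euclidean) norm, hence $1$-Lipschitz as a function of these coordinates. The Borell--TIS inequality then gives, for every $t > 0$,
\[
\Prob\big( \|G_p\|_2 - \mathbb{E}\,\|G_p\|_2 > t \big) \le \exp\!\big(- p t^2\big),
\]
where the factor $p$ in the exponent comes from the coordinate variance $1/(2p)$ combined with the Lipschitz constant $1$ in the usual form $\exp(-t^2/(2 L^2 \sigma^2))$. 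Chaining this with the expectation bound from the previous step gives $\Prob(\|G_p\|_2 > 2\sqrt{2} + t) \le \exp(-p t^2)$, and the prefactor $2$ in the statement is simply slack.

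The main difficulty is nailing down the constant $2\sqrt{2}$ cleanly; the rest is routine concentration. Since the result is quoted verbatim from~\cite{gaussaincondition}, I would simply cite that work rather than reproduce the $\epsilon$-net computation in detail.
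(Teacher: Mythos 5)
Your plan is sound, and in fact the paper itself supplies no proof here: the lemma is imported verbatim from \cite[Lemma~2.2]{gaussaincondition} with a citation only, which is also what you conclude you would do. So at the level of what the paper does, you match it exactly.

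Regarding the sketch you give of how the bound would be established from scratch: the decomposition and the concentration step are both correct. Writing $G_p = G/\sqrt p$ with $G$ standard complex Ginibre, the clean route to the mean bound is the one you mention parenthetically rather than the $\epsilon$-net: split $G = G_1 + iG_2$ into independent real Gaussian matrices with $\mathcal N(0,1/2)$ entries, apply the Gordon/Slepian bound $\expct\|G_j\|_2 \le \tfrac{1}{\sqrt 2}(\sqrt p + \sqrt p) = \sqrt{2p}$, and sum, giving $\expct\|G\|_2 \le 2\sqrt{2p}$ and hence $\expct\|G_p\|_2 \le 2\sqrt 2$. A generic $\epsilon$-net argument would not directly deliver the constant $2\sqrt 2$, so it is worth being explicit that the comparison inequality is what actually carries the constant. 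The concentration step is exactly right: $\|\cdot\|_2$ is $1$-Lipschitz in Frobenius norm, the $2p^2$ real coordinates of $G_p$ have variance $1/(2p)$, so Borell--TIS gives $\Prob(\|G_p\|_2 - \expct\|G_p\|_2 > t) \le \exp(-pt^2)$, which is even slightly stronger than the stated bound (the factor $2$ in the lemma is indeed slack inherited from the cited source). The ``$R>0$'' in the lemma statement is a stray artifact of the paper and plays no role.
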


The following result is a corollary of Theorem 1.5 in~\cite{gaussaincondition} (obtained by setting $A = 0$ and $\delta = 1$ in the statement of that theorem).
\begin{lemma} \label{lemma:eigenvalue_cond_bound}
    Let $\lambda_1, \ldots, \lambda_p \in \mathbb{C}$ denote the eigenvalues of a complex Gaussian matrix $G_p$. Then, for every measurable open set $B \subset \mathbb{C}$, the condition number of $\lambda_i$ satisfies
    \[\expct \int_{\lambda_i \in B} \kappa(\lambda_i)^2 \leq \frac{p^2\mathrm{vol}(B)}{\pi}.\]
\end{lemma}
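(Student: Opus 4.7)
The plan is to derive this lemma as a direct specialization of Theorem 1.5 of~\cite{gaussaincondition}, as already announced by the statement. That theorem is a smoothed-analysis result: for an arbitrary deterministic matrix $A \in \mathbb{C}^{p\times p}$ and a Gaussian perturbation $A + \delta G_p$ with $G_p$ normalized as in the paper (entries with real and imaginary parts i.i.d.\ $\mathcal{N}(0,1/(2p))$), it bounds $\expct \int_{\lambda_i \in B} \kappa(\lambda_i)^2$ by an expression of the form $p^2 \mathrm{vol}(B)/(\pi \delta^2)$. Setting $A = 0$ and $\delta = 1$ in that theorem reduces the perturbed matrix to $G_p$ itself and removes the $\delta^{-2}$ factor, which yields exactly the stated bound $p^2 \mathrm{vol}(B)/\pi$.

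Concretely, the steps I would execute are: (i) state Theorem 1.5 of~\cite{gaussaincondition} in the form relevant here; (ii) verify that the normalization of $G_p$ used in the excerpt (variance $1/(2p)$ per real/imaginary component) agrees with the convention in~\cite{gaussaincondition}, so no rescaling of the volume factor is required; (iii) observe that the eigenvalues $\lambda_i$ of $G_p$ are almost surely simple (so the classical notion of $\kappa(\lambda_i)$ from Definition~\ref{def:eigenvalue_condition_simple} applies, i.e., $\kappa(\lambda_i) = \|y_i\|_2$ with the right/left eigenvectors normalized by $y_i^*x_i = 1$ and $\|x_i\|_2 = 1$); (iv) substitute $A = 0$ and $\delta = 1$ into the bound from~\cite{gaussaincondition} and read off the conclusion.

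The only real subtlety is step (ii): the cited paper may state its result under a slightly different scaling convention (e.g., entries $\mathcal{N}(0,1)$ or $\mathcal{N}(0, 1/p)$), so one has to track how the prefactor $p^2/\pi$ transforms under rescaling $G_p \mapsto c G_p$. Since the condition numbers $\kappa(\lambda_i)$ are invariant under nonzero scalar multiplication of the matrix whereas the eigenvalues themselves scale, the net effect is only a rescaling of the set $B$ and its volume, and one checks that the normalization chosen in the excerpt is exactly the one for which the prefactor is $p^2/\pi$ without any further constant. This bookkeeping is the only nontrivial part; the rest of the argument is a direct citation.
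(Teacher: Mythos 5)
Your proposal matches the paper's proof exactly: the lemma is stated as a direct corollary of Theorem 1.5 in~\cite{gaussaincondition} with $A = 0$ and $\delta = 1$, and your additional care about the normalization convention is a sensible sanity check on that citation. No further comparison is needed.
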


Combining the above two results allows us to control the eigenvector matrix condition number of  $G_p$.

\begin{theorem}\label{thm:gaussian_eigenvector_cond}
Let $X$ be a $p\times p$  matrix containing the eigenvectors of a $p\times p$ complex Gaussian matrix $G_p$ defined above, scaled such that the columns of $X$ have unit norm. Then
        \[ \Prob\left( \kappa_2(X) \geq R \right) \leq 2 \exp(-p t^2) +  \frac{p^3(2\sqrt{2}+t)^2}{R^2} \]
        holds for every $t>0$ and $R>0$. 
\end{theorem}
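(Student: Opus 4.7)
The plan is to control $\kappa_2(X) = \|X\|_2 \|X^{-1}\|_2$ by bounding the two factors separately. Since the columns of $X$ have unit norm, I immediately obtain $\|X\|_2^2 \le \|X\|_F^2 = p$. For the inverse, I would observe that $Y := X^{-*}$ consists of the left eigenvectors of $G_p$, and the identity $Y^*X = I_p$ together with $\|x_i\|_2 = 1$ means that the $i$th column $y_i$ of $Y$ satisfies $\|y_i\|_2 = \kappa(\lambda_i)$ in the sense of Definition \ref{def:eigenvalue_condition_simple}. Hence
\[
\|X^{-1}\|_2^2 \le \|X^{-1}\|_F^2 = \|Y\|_F^2 = \sum_{i=1}^{p} \kappa(\lambda_i)^2,
\]
so that $\kappa_2(X)^2 \le p \sum_{i=1}^p \kappa(\lambda_i)^2$. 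This is the deterministic reduction that turns the statement into a question about the tail of $\sum_i \kappa(\lambda_i)^2$.

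Next, I would exploit Lemma \ref{lemma:gaussian_operator_norm} by introducing the good event $\mathcal{E}_t := \{\|G_p\|_2 \le 2\sqrt{2}+t\}$. On $\mathcal{E}_t$ every eigenvalue of $G_p$ lies in the open ball $B \subset \mathbb{C}$ of radius $2\sqrt{2}+t$ centered at the origin, so $\sum_{i=1}^p \kappa(\lambda_i)^2 = \sum_{i=1}^p \mathbf{1}[\lambda_i \in B]\,\kappa(\lambda_i)^2$. Applying Lemma \ref{lemma:eigenvalue_cond_bound} to this $B$ (whose Lebesgue area is $\pi(2\sqrt{2}+t)^2$) then yields
\[
\expct \sum_{i=1}^p \mathbf{1}[\lambda_i \in B]\,\kappa(\lambda_i)^2 \le p^2(2\sqrt{2}+t)^2.
\]

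Finally, I would combine everything via a union bound followed by Markov's inequality:
\[
\Prob(\kappa_2(X) \ge R) \le \Prob(\mathcal{E}_t^c) + \Prob\Big(p \sum_{i=1}^p \mathbf{1}[\lambda_i \in B]\,\kappa(\lambda_i)^2 \ge R^2\Big).
\]
The first term is bounded by $2\exp(-pt^2)$ via Lemma \ref{lemma:gaussian_operator_norm}, and the second by $p^3(2\sqrt{2}+t)^2/R^2$ via Markov and the expectation bound above, giving exactly the claimed inequality.

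The main subtlety is conceptual rather than computational: Lemma \ref{lemma:eigenvalue_cond_bound} only controls the contribution of eigenvalues lying inside a bounded region $B$, so one must first localize the spectrum through a norm deviation bound before Markov's inequality becomes useful; the two estimates must then be matched on the same truncation radius $2\sqrt{2}+t$. A minor but necessary preliminary remark is that $G_p$ is almost surely diagonalizable (the discriminant of its characteristic polynomial is a nonzero polynomial in the entries, hence vanishes only on a set of measure zero), so the matrix $X$ of eigenvectors is well defined with probability one and both sides of the claim make sense.
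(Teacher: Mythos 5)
Your argument is correct and follows essentially the same route as the paper: condition on the norm event $\{\|G_p\|_2 \le 2\sqrt2 + t\}$ (bounded by Lemma~\ref{lemma:gaussian_operator_norm}), apply Lemma~\ref{lemma:eigenvalue_cond_bound} to the ball of that radius, and finish with Markov. The one place where you depart from the paper is pleasantly instructive: for the deterministic reduction $\kappa_2(X)^2 \le p\sum_i \kappa(\lambda_i)^2$ the paper simply cites Lemma~3.1 of~\cite{gaussaincondition}, whereas you rederive it from the column normalization ($\|X\|_2^2\le\|X\|_F^2=p$) and the identification $\|X^{-1}\|_F^2 = \sum_i\|y_i\|_2^2 = \sum_i\kappa(\lambda_i)^2$ coming from $Y^*X=I_p$ and Definition~\ref{def:eigenvalue_condition_simple}; that makes the step self-contained at no extra cost.
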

\begin{proof}
Letting $B$ be the ball of radius $\rho$ centered at the origin, we can conclude from Lemma~\ref{lemma:eigenvalue_cond_bound} that
     \begin{equation}\label{eq:condition_expectation}
     \expct \int_{\lambda_i \in B} \kappa(\lambda_i)^2 \leq p^2 \rho^2.
     \end{equation}
By \cite[Lemma 3.1]{gaussaincondition},
$\kappa_2(X)^2 \leq  p( \kappa(\lambda_1)^2 + \cdots + \kappa(\lambda_p)^2)$.
Applying Markov's inequality to $\eqref{eq:condition_expectation}$ gives 
    \[\Prob\left( \kappa_2(X)^2 / p \geq s^2 \cap  \|G_p\|_2 \le \rho \right) \leq p^2 \rho^2 / s^2,\]
    or, equivalently,
    \begin{equation*}\label{eq:markov_eig_cond}
        \Prob\left( \kappa_2(X)  \geq R  \cap  \|G_p\|_2 \le \rho \right) \leq p^3 \rho^2 / R^2,
    \end{equation*}
where we used that $\|G_p\|_2 \le \rho$ implies $|\lambda_i| \leq \rho$. The proof is completed by using
\[ \Prob\left( \kappa_2(X)  \geq R \right) \le \Prob\left( \kappa_2(X)  \geq R  \cap  \|G_p\|_2 \le \rho \right) + \Prob\left(\|G_p\|_2 \ge \rho \right) \]
and applying Lemma~\ref{lemma:gaussian_operator_norm} with $\rho = 2\sqrt{2} + t$.
\end{proof}

Theorem~\ref{thm:gaussian_eigenvector_cond} shows that the eigenvector matrix of a $p\times p$ complex Gaussian matrix has a condition number $\mathcal{O}(p^{3/2})$ with high probability, implying that it is extremely unlikely to encounter very large condition numbers. Combined with~\eqref{eq:boundtildey}, this \emph{indicates} that the condition numbers $\|\widetilde y_i\|_2$ of the perturbed eigenvalues are likely not much larger than the condition number $\|Y_1\|_2$ of the unperturbed semisimple eigenvalue. This expectation is matched by the numerical experiments reported in Section~\ref{sec:synthetic}.

%%%%%%%%%%%%%%%%%%%%%%%%%%%%%%%%%%%%%%%%%%%%%%%%%%%%%%%%%%%%%%%%%%%%%%%%%%%%%%%%%%%%%%%%%%%
\section{Numerical experiments for synthetic data}\label{sec:synthetic}

In this section, we report numerical experiments using synthetic data that highlight various theoretical aspects of this work. For this purpose, we generate pairs of commuting matrices $A_1$ and $A_2$ with known joint eigenvalues by performing a similarity transformation of diagonal matrices.
The transformation is effected by a random $n\times n$ matrix $X$ with normalized columns and prescribed condition number $\kappa_2(X)=\kappa$. In this regard, 
we generate $\widetilde X=Q_1DQ_2$, where $D={\rm diag}(1,\widetilde\kappa^{1/(n-1)},\widetilde\kappa^{2/(n-1)},\ldots,\widetilde\kappa)$ and $Q_1,Q_2$ are random orthogonal $n\times n$ matrices.
The matrix $X$ is obtained from $\widetilde X$ by normalizing its columns, and $\widetilde \kappa$ is adjusted such that $\kappa_2(X)=\kappa$.
We generate nearly commuting matrices $\widetilde A_i=A_i+\epsilon \sqrt{2}/2 E_i$ for $i =1,2$ by adding Gaussian random matrices  $E_i$, scaled  such that $\|E_i\|_F=1$. 
To compute eigenvalue approximations, we apply Algorithm~\ref{alg:RJEA} repeatedly with $N=10^4$ realizations of the random vector $\mu$.
All numerical experiments were carried out in Matlab 2023a. The code and data for numerical examples in this paper are available 
at \url{https://github.com/borplestenjak/RandomJointEig}.

\begin{example}\rm\label{ex:ex1} 
We take $A_i=XD_iX^{-1}$ with
$D_1={\rm diag}(1,1,1,2,2,2,3)$, $D_2={\rm diag}(1,2,3,1,2,3,3)$,
and $\kappa_2(X)=10^2$. Both matrices $A_1$ and $A_2$ have multiple eigenvalues, but the joint eigenvalues are all simple.
The condition number of the joint eigenvalue $\blambda^{(1)}=(1,1)$ is  $21.6$, $\|A_1\|_2=44.8$ and $\|A_2\|_2=38.6$, representing a well-conditioned problem. 
We generated nearly commuting pairs $\widetilde A_1,\widetilde A_2$ with noise levels $\epsilon=0,10^{-14},10^{-12},10^{-10}$. Even for $\epsilon=0$, the matrices are inevitably affected by roundoff error on the level  $\epsilon_0=(\|A_1\|^2_2+\|A_2\|^2_2)^{1/2}u \approx 6.6\cdot 10^{-15}$, where $u$ is the unit roundoff in double precision.

Figure~\ref{fig:Ex1_Fig} shows the distribution of the approximation errors obtained when applying  Algorithm~\ref{alg:RJEA} $N = 10^4$ times to $\widetilde A_1,\widetilde A_2$.
As $\blambda^{(1)}$ is simple, Theorem~\ref{thm:semisimple} indicates that the distribution of the errors for the two-sided Rayleigh quotient is flat because the first-order term does not depend on $\mu$.
While this is indeed true for $\epsilon=10^{-12}$ and $\epsilon=10^{-10}$, the effects of roundoff error,  committed during the computation of the Rayleigh quotients, dominate for $\epsilon=10^{-14}$ and $\epsilon=0$, leading to non-flat distributions. It can be seen from the right plot
of Figure~\ref{fig:Ex1_Fig} that these effects disappear when the computations are carried out in quadruple precision, using the Advanpix Multiprecision Computing Toolbox~\cite{advanpix}, before rounding the results back to double precision. 

\begin{figure}[htb!]
    \centering
 \includegraphics[width=6.4cm]{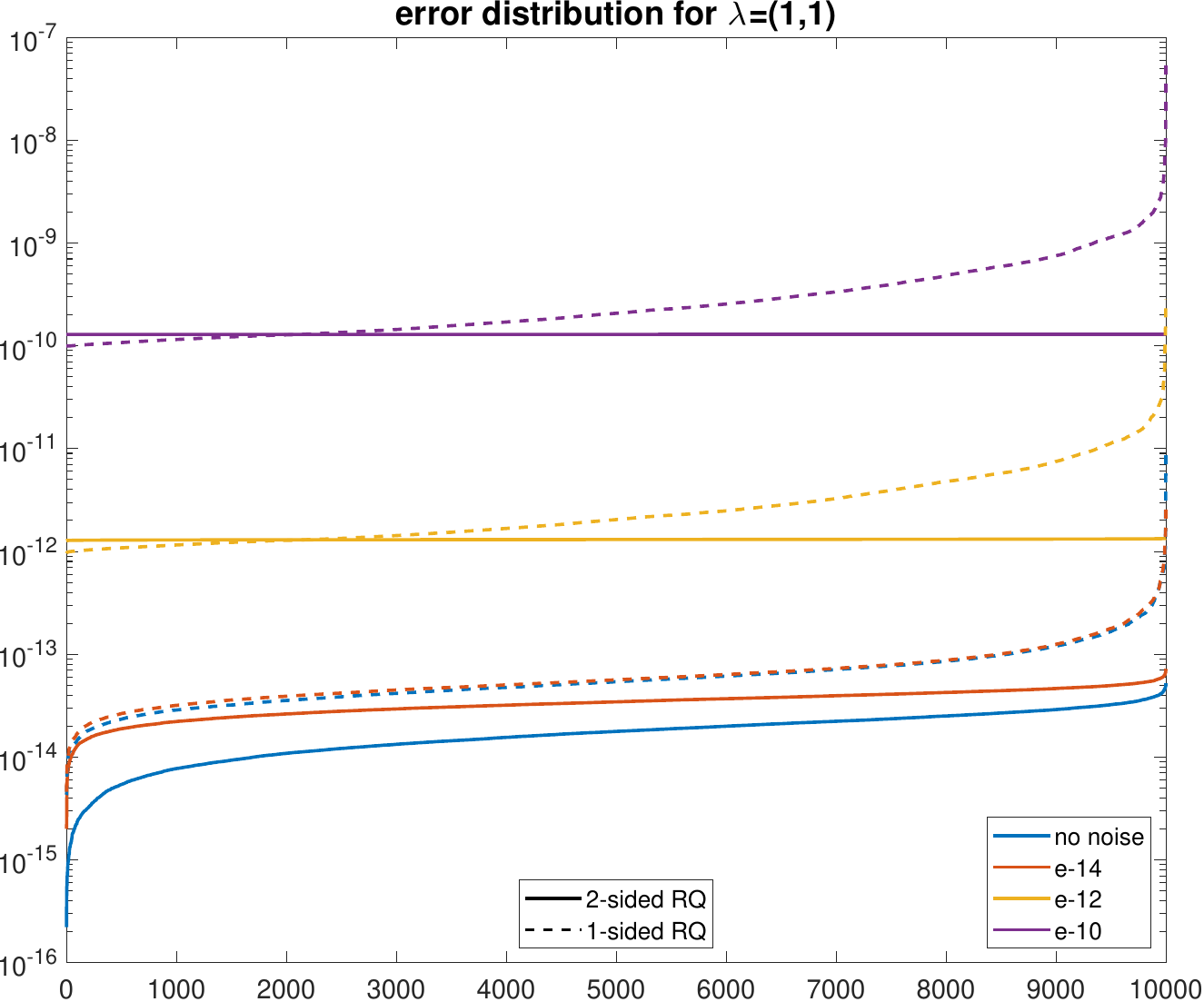}\ \
 \includegraphics[width=6.4cm]{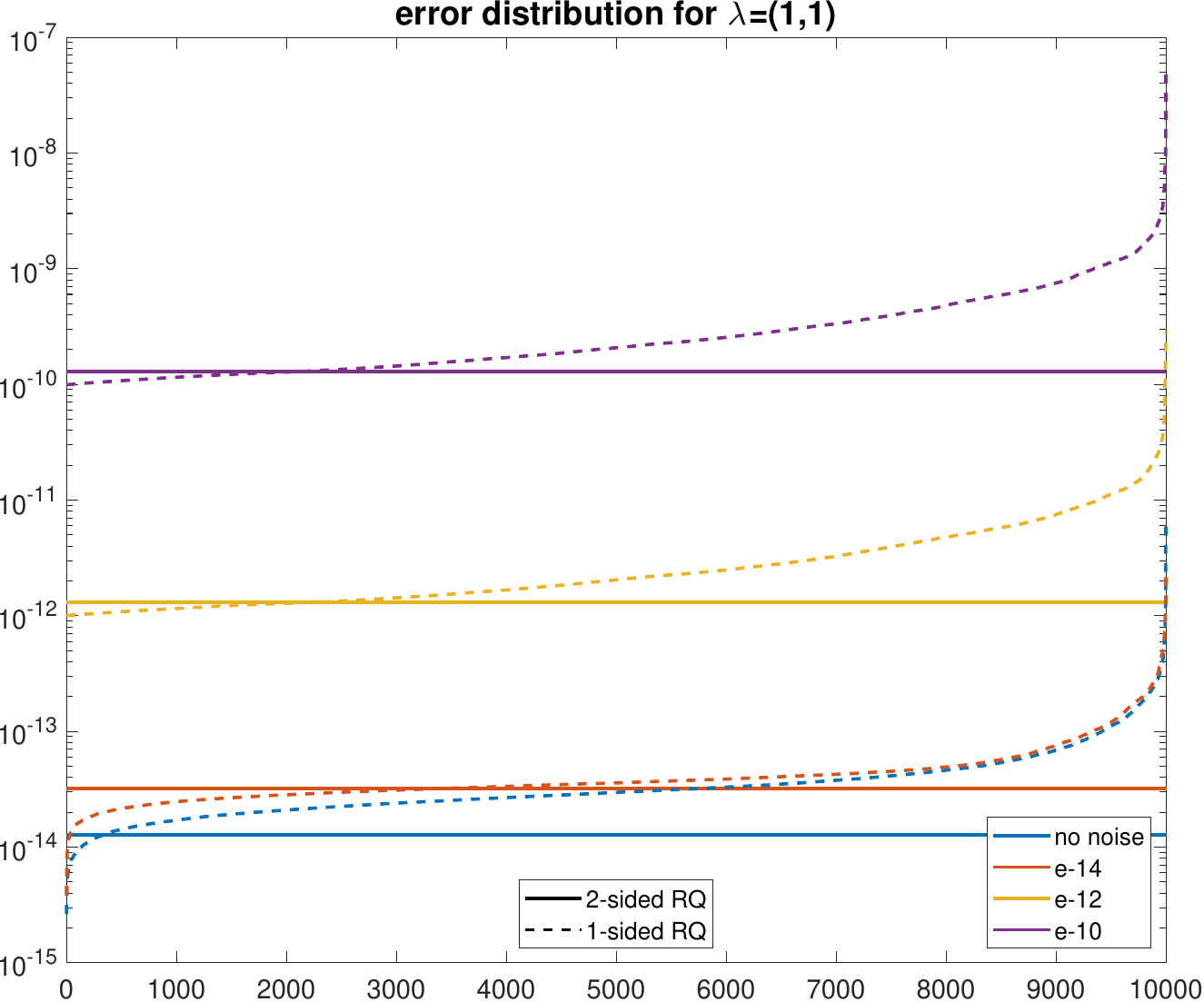}
    \caption{Distribution of absolute errors for the eigenvalue $\blambda^{(1)}$ from
    Example \ref{ex:ex1} using double precision (left) and extended precision (right) to compute the eigenvalues of $\widetilde A(\mu)$ using one-sided (dashed lines) and two-sided (solid lines) Rayleigh quotients.}
    \label{fig:Ex1_Fig}  
\end{figure}

Figure~\ref{fig:Ex1_Fig2} compares the approximation errors with the first-order terms of the bounds~\eqref{eqdet:Asemisimple} and \eqref{eqdet:Asimple1s} from Theorem \ref{thm:semisimple}. It can be observed that the bounds match the true errors within 1-2 orders of magnitude.

\begin{figure}[htb!]
    \centering
 \includegraphics[width=6.4cm]{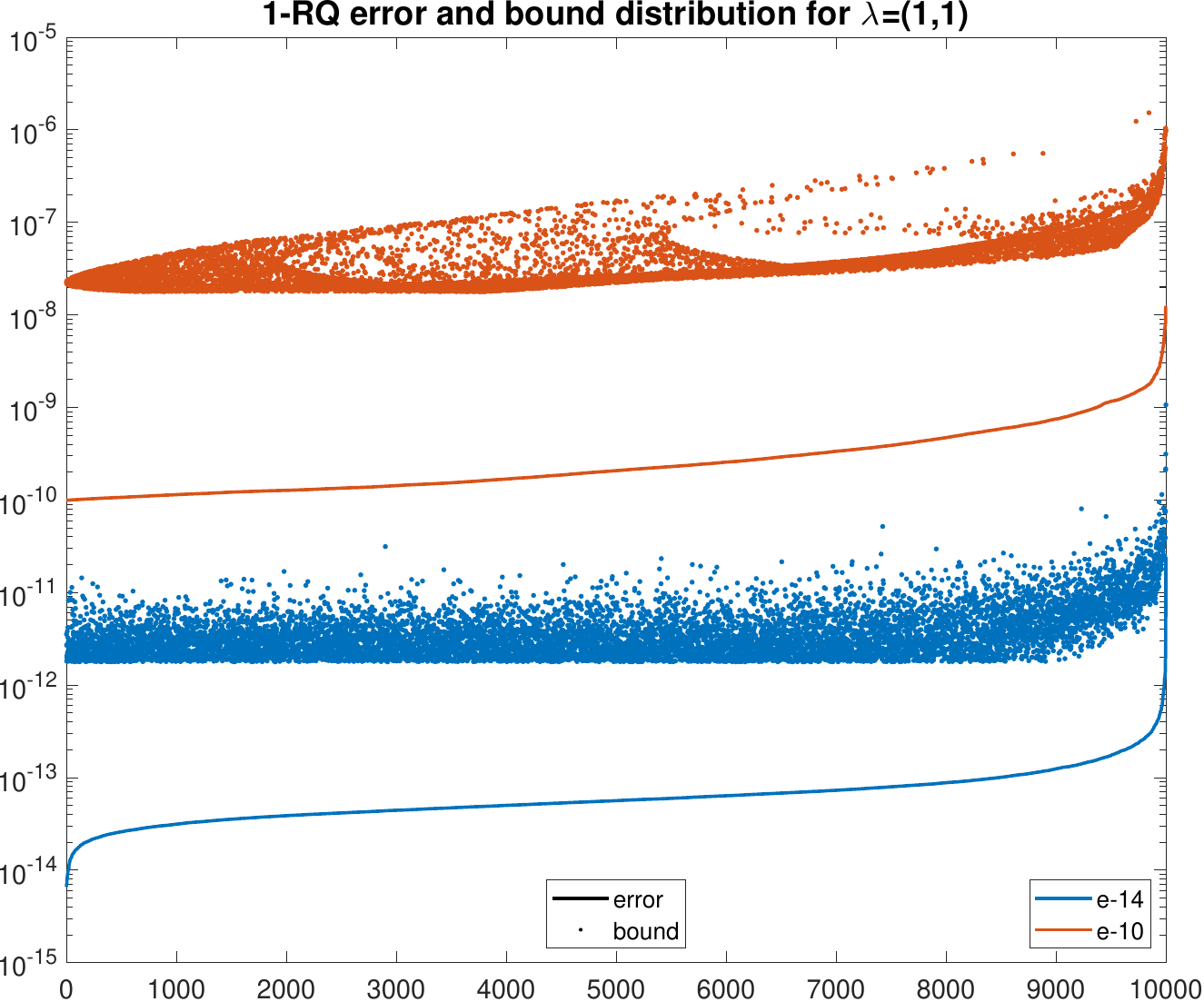}\ \
 \includegraphics[width=6.4cm]{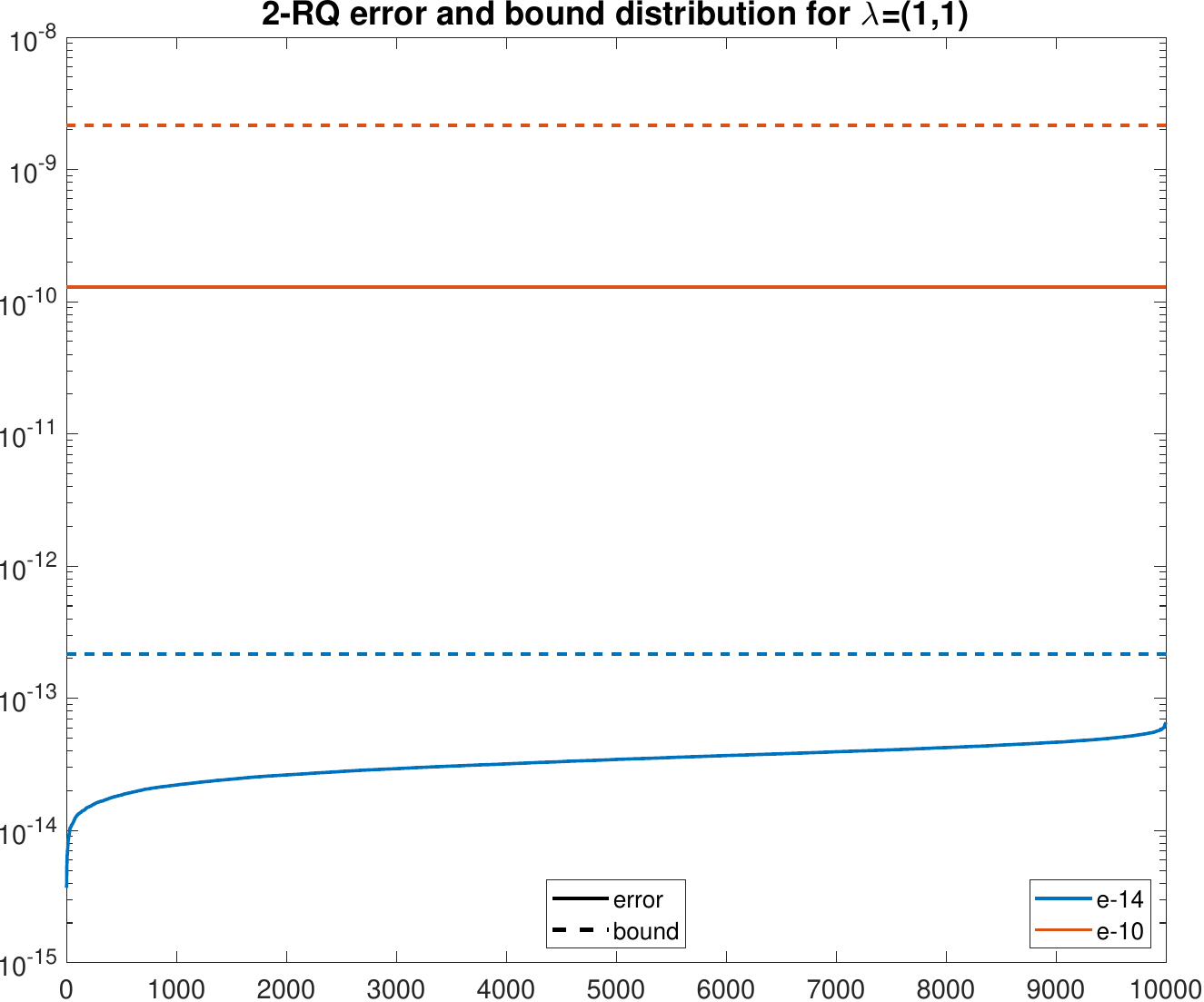}
    \caption{Absolute errors for the eigenvalue $\blambda^{(1)}$ from
    Example \ref{ex:ex1} using one-sided Rayleigh quotient vs. bound \eqref{eqdet:Asimple1s} (left) and
    two-sided Rayleigh quotient vs. bound \eqref{eqdet:Asemisimple} (right).}
    \label{fig:Ex1_Fig2}  
\end{figure}

Additional statistics are reported in Table \ref{table:Ex1}.
As predicted by Theorem~\ref{thm:semisimple},
the median error is proportional to $\epsilon$.
The last two columns show that the error of the two-sided Rayleigh quotient is often better
and nearly never more than five times worse than the error of the one-sided Rayleigh quotient.

We have also computed eigenvalue approximation errors for the other joint eigenvalues and the observations are similar to the ones reported for $\blambda^{(1)}=(1,1)$.

\begin{table}[!hbt!]
%\begin{center}
\caption{Median approximation errors, bounds~\eqref{eqdet:Asimple1s} and \eqref{eqdet:Asemisimple}, and the empirical probability that $b$ (error of two-sided Rayleigh quotient) is (significantly) smaller than $a$
(error of one-sided Rayleigh quotient) for the eigenvalue $\blambda^{(1)}$ from Example \ref{ex:ex1}.}
\label{table:Ex1}
\small
\begin{tabular}{|l|l|l|l|l|l|l|}
\hline \rule{0pt}{2.3ex}
 & \multicolumn{2}{c}{1RQ: $\blambda^{(1)} = (1,1)$} & \multicolumn{2}{|c|}{2RQ: $\blambda^{(1)} = (1,1)$}
& \multicolumn{2}{c|}{empirical prob.}\\
\hline \rule{0pt}{2.3ex}
 noise $\epsilon$ & error $(a)$ & bound \eqref{eqdet:Asimple1s} & error $(b)$ & bound \eqref{eqdet:Asemisimple}  & ${\mathbb P}(b<a)$  &  ${\mathbb P}(b<5a)$\\ 
\hline \rule{0pt}{2.3ex}
$0$        & $5.4\cdot 10^{-14}$ & $3.5\cdot 10^{-13}$ & $1.8\cdot 10^{-14}$ & $1.4\cdot 10^{-13}$ & $0.9609$ & $1.0000$\\
$10^{-14}$ & $5.6\cdot 10^{-14}$ & $3.3\cdot 10^{-12}$ & $3.4\cdot 10^{-14}$ & $2.2\cdot 10^{-13}$ & $0.8433$ & $0.9995$ \\
$10^{-12}$ & $2.0\cdot 10^{-12}$ & $2.7\cdot 10^{-10}$ & $1.3\cdot 10^{-12}$ & $2.2\cdot 10^{-11}$ & $0.7777$ & $1.0000$\\
$10^{-10}$ & $2.1\cdot 10^{-10}$ & $2.3\cdot 10^{-8}$  & $1.3\cdot 10^{-10}$ & $2.2\cdot 10^{-9}$  & $0.7865$ & $1.0000$ \\
\hline
\end{tabular}
%\end{center}
\end{table}
\end{example}

\begin{example}\rm\label{ex:empProb}
Using the same matrices from the previous example we now test the probabilistic bounds from Section~\ref{subsec:prob_bound}. For the eigenvalue $\blambda^{(1)}=(1,1)$,
it follows from Lemma~\ref{lem:probabilistic_bounds} that 
    \begin{equation}\label{lemeq:d_mu_prob_bound_ex}
        \Prob(d(\mu) < 1/R )  < 6/ R^2.
    \end{equation}
We computed $
    d(\mu):=\min_{i=2,\ldots,7}\left(\frac{|\lambda_i(\mu) - \lambda_1(\mu)|}{\|\blambda^{(i)}-\blambda_1\|}\right)
    $ for $N=10^6$ realizations of $\mu$ and computed
the empirical probabilities for $\Prob\big(d(\mu) < 1/R\big)$ for various values of $R$.
The left plot of Figure~\ref{fig:empProb} reveals that the 
bound \eqref{lemeq:d_mu_prob_bound_ex} is quite tight.

We also computed the empirical probabilities for the result of 
Theorem~\ref{cor:prob_bound_semisimple} concerning the one-sided Rayleigh quotient approximation error.
For this purpose, we added noise of the levels
$\epsilon \in \{ 10^{-12},10^{-11},10^{-10},10^{-9} \}$ 
to create nearly commuting pairs $\widetilde A_1,\widetilde A_2$.
In line with~\eqref{eq:boundR_semisimple}, we compare the empirically measured probabilities with
    \begin{equation}\label{eq:boundR_semisimple_ex}
        \Prob\big( \|\widetilde{\blambda}_{\mathrm{RQ1}}^{(i)} - \blambda^{(1)}\|_2 > 
        (1 + R)\epsilon \big) < {12\kappa_2(X)} / {R^2}.
    \end{equation} 
The results are reported in Figure~\ref{fig:empProb} (right). This time, the empirical probabilities remain proportional to $R^{-2}$, but the bound is not as tight as on the left-hand side. As expected from Theorem~\ref{cor:prob_bound_semisimple}, the level of noise only has a negligible impact on the failure probability of the bound.

\begin{figure}[htb!]
    \centering
 \includegraphics[width=6.4cm]{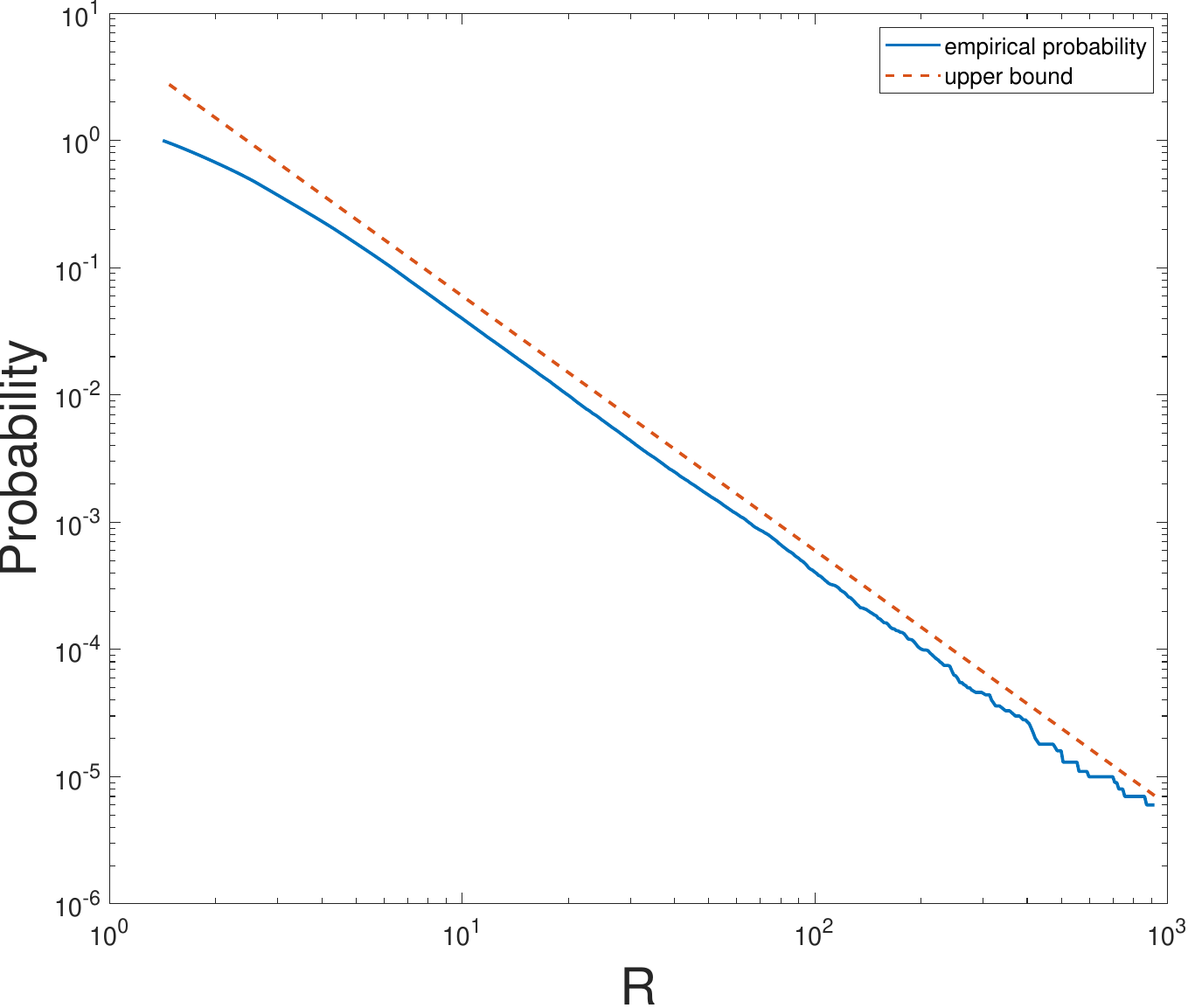}\ \
 \includegraphics[width=6.4cm]{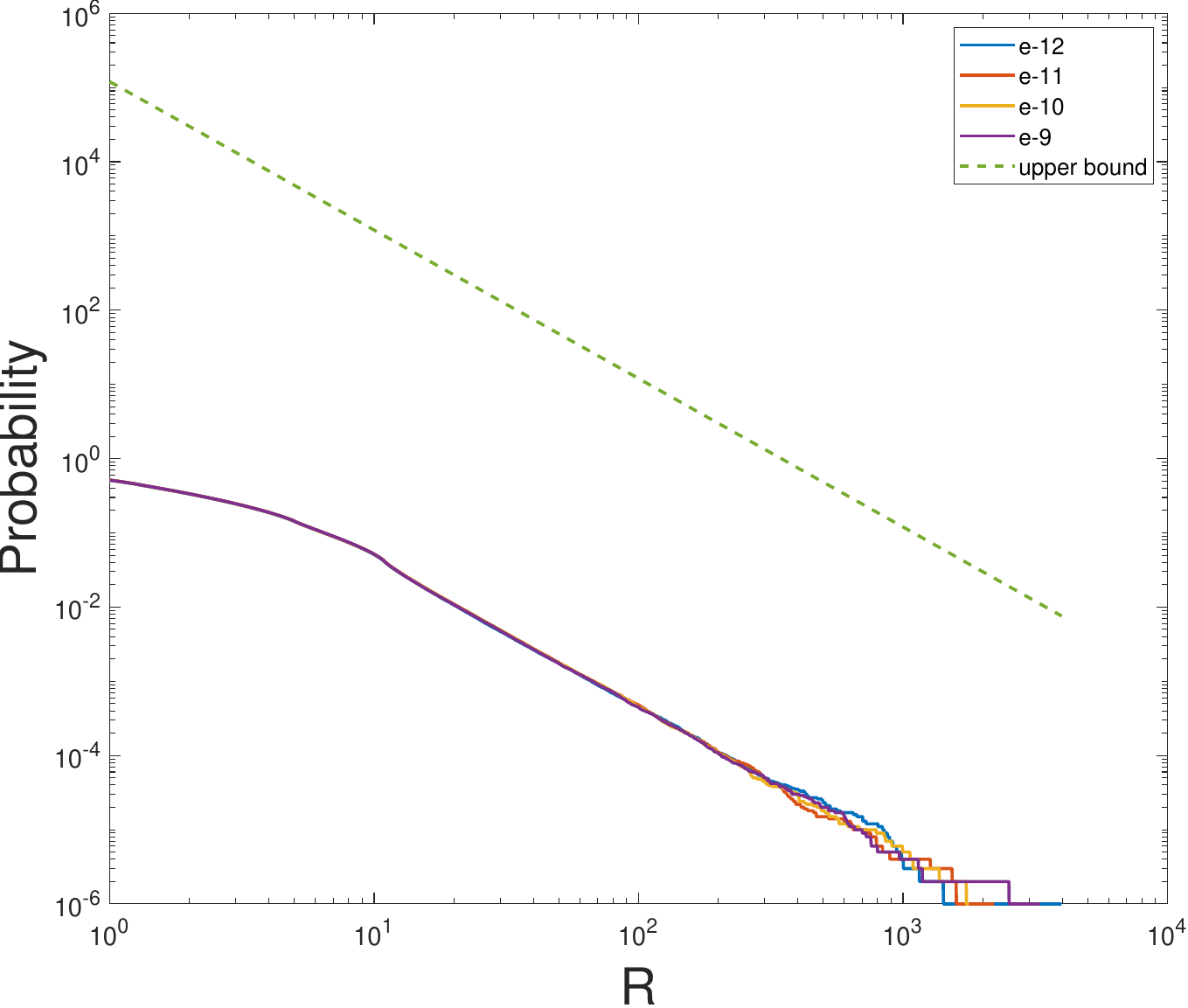}\ 
    \caption{Empirical probabilities compared to the bounds for \eqref{lemeq:d_mu_prob_bound_ex} (left) and \eqref{eq:boundR_semisimple_ex} (right) for the data from Example~\ref{ex:empProb}.}
    \label{fig:empProb}  
\end{figure}
\end{example}

\begin{example}\rm\label{ex:ex2} 
We take $A_i=XD_iX^{-1}$ with $D_i$ as in Example \ref{ex:ex1}, but now
$X=P\cdot {\rm diag}(I_2,Z)$, where $P$ is a $7\times 7$ Gaussian random matrix and $Z$ is a
$5\times 5$ random matrix with $\kappa_2(Z)=10^4$. While the joint eigenvalues remain the same, the different choice of $X$ implies that the condition numbers of $\blambda^{(1)}=(1,1)$ and $\blambda^{(2)}=(1,2)$ are much smaller than those of the other five eigenvalues.
%is $\kappa_2(X)=1.2\cdot 10^4$.
In particular, the condition numbers of $\blambda^{(1)}$ and $\blambda^{(4)}$ are $2.1$ and $4.1\cdot 10^3$, respectively. We have 
$\|A_1\|_2=2.6\cdot 10^3$, $\|A_2\|_2=7.1\cdot 10^3$ and, hence, the noise created by roundoff error is on the level $\epsilon_0=8.4\cdot 10^{-13}$. We repeated the experiments described from Example~\ref{ex:ex1}
with noise levels $\epsilon=0,10^{-12},10^{-10},10^{-8}$; see Figure~\ref{fig:Ex2_Fig}.
For $\blambda^{(4)}$ the one-sided and two-sided Rayleigh quotients return comparable accuracy, in line with what has been observed in Example~\ref{ex:ex1}. On contrast, the two-sided Rayleigh quotient is significantly more accurate for $\blambda^{(1)}$. This reflects the fact that 
the error bound from Theorem~\ref{thm:semisimple} for the two-sided Rayleigh quotient features
$\kappa( \blambda^{(1)} ) = 2.1$, while the error bound for the one-sided Rayleigh quotient features $\|X_2\|_2 \|Y_2\|_2 \approx 10^4$. Additional statistics, demonstrating the advantages of the two-sided version, is reported in Table~\ref{table:Ex2}.

\begin{figure}[htb!]
    \centering
 \includegraphics[width=6.4cm]{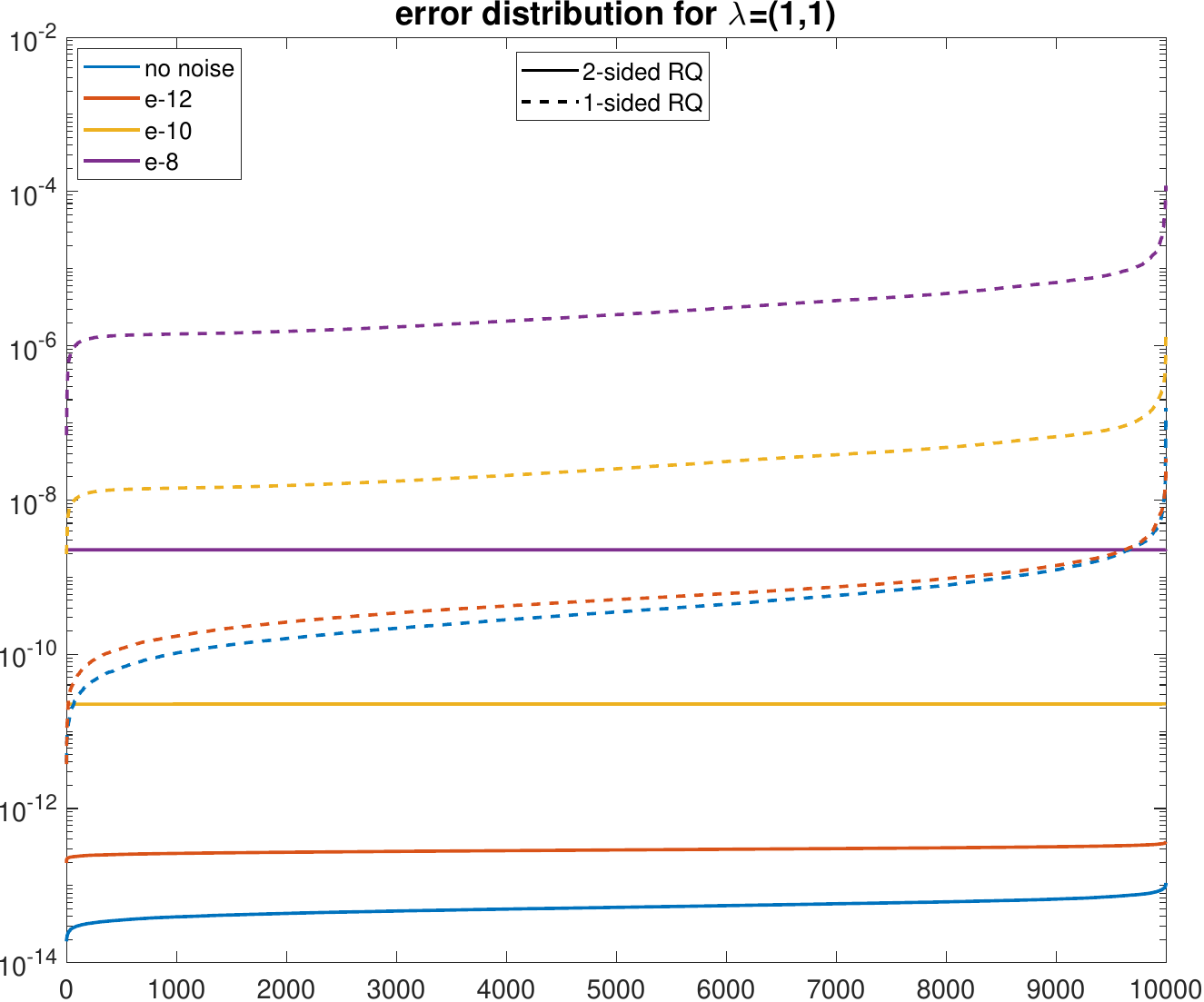}\ \
 \includegraphics[width=6.4cm]{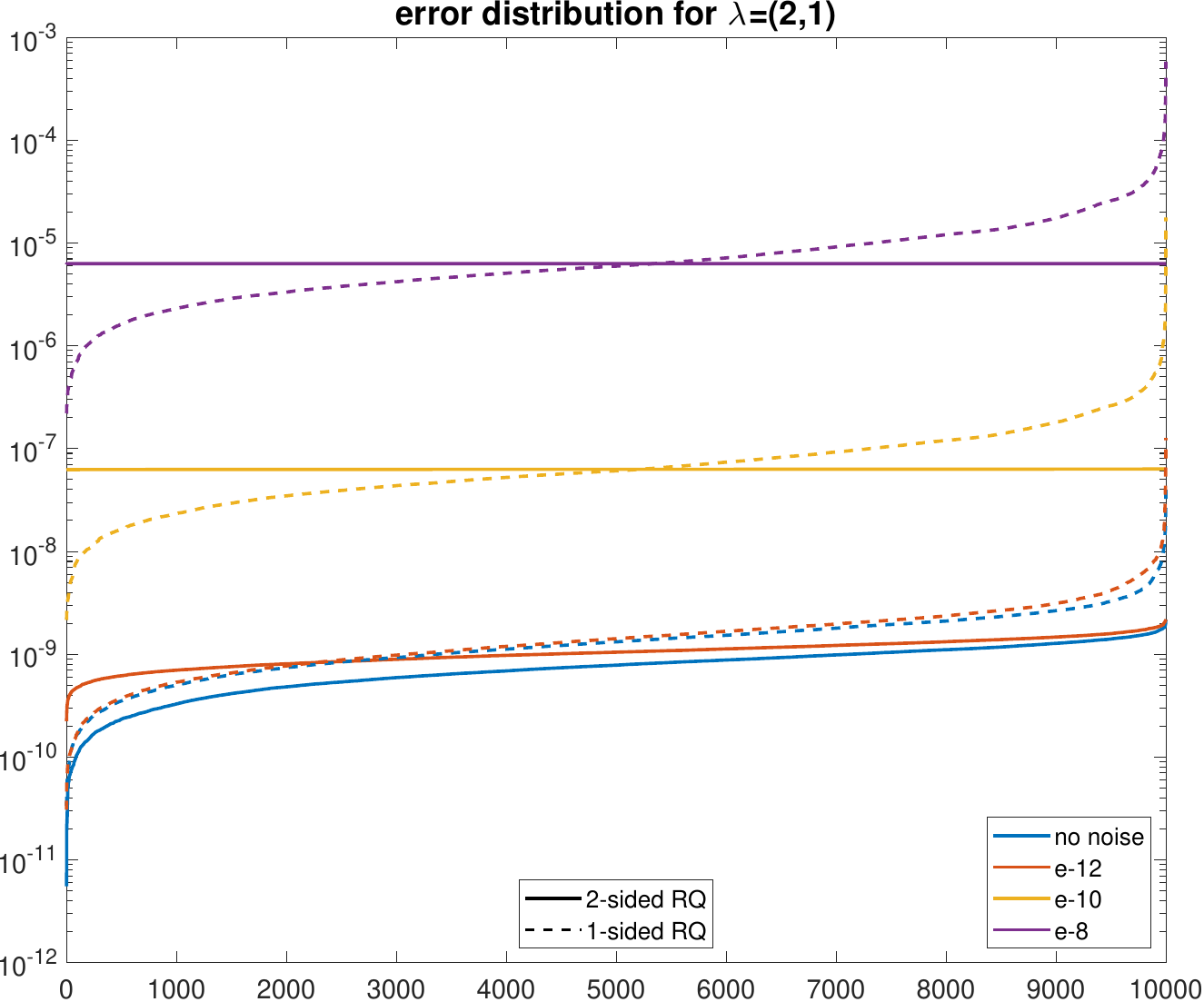}
    \caption{Distribution of absolute errors for the eigenvalues $\blambda^{(1)}=(1,1)$ (left) and $\blambda^{(4)}=(2,1)$ (right) from
    Example~\ref{ex:ex2} using one-sided (dashed lines) and two-sided (solid lines) Rayleigh quotients.
        \label{fig:Ex2_Fig}  }
\end{figure}

\begin{table}[!hbt!]
%\begin{center}
\caption{Median approximation errors, bounds~\eqref{eqdet:Asimple1s} and \eqref{eqdet:Asemisimple}, and the empirical probability that $b$ (error of two-sided Rayleigh quotient) is (significantly) smaller than $a$
(error of one-sided Rayleigh quotient) for the eigenvalues $\blambda^{(1)}$ and $\blambda^{(4)}$ from Example~\ref{ex:ex2}.}
\label{table:Ex2}
\small
\begin{tabular}{|l|l|l|l|l|l|l|}
\hline \rule{0pt}{2.3ex}
 & \multicolumn{2}{c}{1RQ: $\blambda^{(1)} = (1,1)$} & \multicolumn{2}{|c|}{2RQ: $\blambda^{(1)} = (1,1)$}
& \multicolumn{2}{c|}{empirical prob.}\\
\hline \rule{0pt}{2.3ex}
 noise $\epsilon$ & error $(a)$ & bound \eqref{eqdet:Asimple1s} & error $(b)$ & bound \eqref{eqdet:Asemisimple} & ${\mathbb P}(b<a)$  &  ${\mathbb P}(b<5a)$\\ 
\hline \rule{0pt}{2.3ex}
$0$        & $3.5\cdot 10^{-10}$ & $5.2\cdot 10^{-8}$ & $5.2\cdot 10^{-14}$ & $1.8\cdot 10^{-12}$ & $1.0000$ & $1.0000$ \\
$10^{-12}$ & $5.1\cdot 10^{-10}$ & $9.6\cdot 10^{-8}$ & $2.9\cdot 10^{-13}$ & $2.1\cdot 10^{-12}$ & $1.0000$ & $1.0000$ \\
$10^{-10}$ & $2.5\cdot 10^{-8}$  & $3.4\cdot 10^{-6}$ & $2.3\cdot 10^{-11}$ & $2.1\cdot 10^{-10}$ & $1.0000$ & $1.0000$ \\
$10^{-8}$  & $2.5\cdot 10^{-6}$  & $5.4\cdot 10^{-4}$ & $2.3\cdot 10^{-9}$  & $2.1\cdot 10^{-8}$  & $1.0000$ & $1.0000$ \\
\hline
 & \multicolumn{2}{c}{1RQ: $\blambda^{(4)} = (2,1)$} & \multicolumn{2}{|c|}{2RQ: $\blambda^{(4)} = (2,1)$}
& \multicolumn{2}{c|}{empirical prob.}\\
\hline \rule{0pt}{2.3ex}
 noise $\epsilon$   & error $(a)$ & bound \eqref{eqdet:Asimple1s} & error $(b)$ & bound \eqref{eqdet:Asemisimple} & ${\mathbb P}(b<a)$ &  ${\mathbb P}(b<5a)$ \\ 
\hline \rule{0pt}{2.3ex}
$0$        & $1.3\cdot 10^{-9}$  & $1.4\cdot 10^{-8}$ & $7.8\cdot 10^{-10}$ & $3.5\cdot 10^{-9}$ & $0.7519$ & $0.9886$\\
$10^{-12}$ & $1.4\cdot 10^{-9}$  & $1.6\cdot 10^{-8}$ & $1.0\cdot 10^{-9}$  & $4.2\cdot 10^{-9}$ & $0.6580$ & $0.9831$ \\
$10^{-10}$ & $6.1\cdot 10^{-8}$  & $1.6\cdot 10^{-6}$ & $6.3\cdot 10^{-8}$  & $4.2\cdot 10^{-7}$ & $0.4784$ & $0.9715$ \\
$10^{-8}$  & $6.0\cdot 10^{-6}$  & $6.1\cdot 10^{-4}$ & $6.3\cdot 10^{-6}$  & $4.2\cdot 10^{-5}$ & $0.4671$ & $0.9709$ \\
\hline
\end{tabular}
%\end{center}
\end{table}
\end{example}

\begin{example}\rm\label{ex:ex3} The purpose of this example is to investigate the behavior in the presence of (nearly) multiple semisimple eigenvalues. We use $A_i=XD_iX^{-1}$ with $D_1={\rm diag}(1,1+\delta,1-\delta,2,2,2,3)$, 
$D_2={\rm diag}(1,1-\delta,1+\delta,1,2,3,3)$ for $\delta\ge 0$, and a random $X$ such that $\kappa_2(X)=10^4$.
For every combination of $\delta=10^{-4},10^{-6},\ldots,10^{-14}$ and noise level 
$\epsilon=10^{-4},10^{-6},\ldots,10^{-14}$, we compute the approximation errors
for the eigenvalue $\blambda^{(1)}=(1,1)$, which is not well-isolated when $\delta$ is small.
The results in Table \ref{table:Ex3} can be roughly clustered into the following three groups depending on
$(\epsilon,\delta)$:
\begin{enumerate}
\item[a)] For $\epsilon \le 10^{-4}\delta$, because of $\kappa_2(X)=10^{4}$, one can consider $\blambda^{(1)}$ a simple eigenvalue and apply Theorem~\ref{thm:semisimple} with $p=1$. 
Indeed, the errors in this setting behave similarly to the ones observed in 
Examples~\ref{ex:ex1} and~\ref{ex:ex2}. 
\item[b)]For $\epsilon = 10^{-2}\delta$ and $\epsilon = \delta$, $\blambda^{(1)}$ is
in a cluster of three joint eigenvalues. In this case, both $\epsilon$ and $\delta$ contribute to the error.
\item[c)]For $\epsilon\ge 10^2\delta$, the perturbation error dominates and 
one can consider $\blambda^{(1)}$ as a semisimple eigenvalue and apply Theorem~\ref{thm:semisimple} with $p=3$.
\end{enumerate}
As $\delta$ decreases, the empirical probability for ${\mathbb P}(b<a)$ is seen to decrease and is close to $0.5$ for pairs from group c). Still, also for these pairs 
${\mathbb P}(b<5a)$ remains very close to 1. In turn, the two-sided Rayleigh quotient approximation remains competitive also for tight clusters of semisimple eigenvalues.

\begin{table}[!hbt!]
%\begin{center}
\caption{Statistics of approximation errors for eigenvalue $\blambda_1$ from Example \ref{ex:ex3}.}
\label{table:Ex3}
\footnotesize
\begin{tabular}{|l|l|l|l|l|l|l|}
\hline \rule{0pt}{2.3ex}
& $\delta=10^{-4}$& $\delta=10^{-6}$ & $\delta=10^{-8}$ & $\delta=10^{-10}$ & $\delta=10^{-12}$ & $\delta=10^{-14}$ \\
\hline \rule{0pt}{2.3ex}%
 noise $\epsilon$ & \multicolumn{6}{c|}{Median of $a$ (error of one-sided Rayleigh quotient)} \\ 
\hline \rule{0pt}{2.3ex}%
$10^{-14}$ & $3.1\cdot 10^{-10}$ & $3.4\cdot 10^{-10}$ & $3.4\cdot 10^{-10}$ & $2.3\cdot 10^{-10}$ & $7.5\cdot 10^{-12}$ & $2.0\cdot 10^{-12}$ \\
$10^{-12}$ & $3.9\cdot 10^{-10}$ & $3.2\cdot 10^{-10}$ & $4.6\cdot 10^{-10}$ & $2.4\cdot 10^{-10}$ & $7.2\cdot 10^{-12}$ & $7.5\cdot 10^{-12}$ \\
$10^{-10}$ & $2.9\cdot 10^{-8}$  & $2.8\cdot 10^{-8}$  & $2.5\cdot 10^{-8}$  & $5.4\cdot 10^{-10}$ & $5.8\cdot 10^{-10}$ & $5.8\cdot 10^{-10}$ \\
$10^{-8}$  & $2.8\cdot 10^{-6}$  & $2.6\cdot 10^{-6}$  & $5.3\cdot 10^{-8}$  & $5.7\cdot 10^{-8}$  & $5.7\cdot 10^{-8}$  & $5.7\cdot 10^{-8}$  \\
$10^{-6}$  & $2.5\cdot 10^{-5}$  & $5.4\cdot 10^{-6}$  & $5.8\cdot 10^{-6}$  & $5.8\cdot 10^{-6}$  & $5.8\cdot 10^{-6}$  & $5.8\cdot 10^{-6}$  \\
\hline \rule{0pt}{2.3ex}%
 noise $\epsilon$ & \multicolumn{6}{c|}{Median of $b$ (error of two-sided Rayleigh quotient)} \\ 
\hline \rule{0pt}{2.3ex}%
$10^{-14}$ & $7.7\cdot 10^{-11}$ & $6.4\cdot 10^{-11}$ & $1.1\cdot 10^{-10}$ & $5.7\cdot 10^{-11}$ & $4.4\cdot 10^{-12}$ & $1.1\cdot 10^{-12}$ \\
$10^{-12}$ & $1.1\cdot 10^{-10}$ & $7.5\cdot 10^{-11}$ & $1.5\cdot 10^{-10}$ & $6.7\cdot 10^{-11}$ & $4.7\cdot 10^{-12}$ & $6.0\cdot 10^{-12}$ \\
$10^{-10}$ & $7.2\cdot 10^{-9}$  & $7.2\cdot 10^{-9}$  & $9.5\cdot 10^{-9}$  & $5.7\cdot 10^{-10}$ & $5.7\cdot 10^{-10}$ & $5.6\cdot 10^{-10}$ \\
$10^{-8}$  & $7.2\cdot 10^{-7}$  & $9.5\cdot 10^{-7}$  & $5.6\cdot 10^{-8}$  & $5.5\cdot 10^{-8}$  & $5.5\cdot 10^{-8}$  & $5.5\cdot 10^{-8}$  \\
$10^{-6}$  & $9.5\cdot 10^{-5}$  & $5.6\cdot 10^{-6}$  & $5.5\cdot 10^{-6}$  & $5.5\cdot 10^{-6}$  & $5.5\cdot 10^{-6}$  & $5.5\cdot 10^{-6}$  \\
\hline \rule{0pt}{2.3ex}%
 noise $\epsilon$ & \multicolumn{6}{c|}{empirical probability ${\mathbb P}(b<a)$} \\ 
\hline \rule{0pt}{2.3ex}%
$10^{-14}$ & $0.9495$ & $0.9712$ & $0.9334$ & $0.9524$ & $0.8039$ & $0.7068$ \\
$10^{-12}$ & $0.9474$ & $0.9506$ & $0.8922$ & $0.9403$ & $0.7069$ & $0.5855$ \\
$10^{-10}$ & $0.9736$ & $0.9735$ & $0.8373$ & $0.4561$ & $0.4817$ & $0.4952$ \\
$10^{-8}$  & $0.9699$ & $0.8446$ & $0.4436$ & $0.4902$ & $0.4902$ & $0.4903$ \\
$10^{-6}$  & $0.8413$ & $0.4558$ & $0.4997$ & $0.5005$ & $0.5005$ & $0.5005$ \\
\hline \rule{0pt}{2.3ex}%
 noise $\epsilon$ & \multicolumn{6}{c|}{empirical probability ${\mathbb P}(b<5a)$} \\ 
\hline \rule{0pt}{2.3ex}%
$10^{-14}$ & $0.9998$ & $0.9999$ & $0.9994$ & $0.9987$ & $0.9973$ & $0.9887$ \\
$10^{-12}$ & $1.9996$ & $0.9994$ & $0.9986$ & $0.9991$ & $0.9945$ & $0.9718$ \\
$10^{-10}$ & $1.0000$ & $1.0000$ & $0.9999$ & $0.9916$ & $0.9767$ & $0.9767$ \\
$10^{-8}$  & $1.0000$ & $1.0000$ & $0.9944$ & $0.9798$ & $0.9791$ & $0.9793$ \\
$10^{-6}$  & $1.0000$ & $0.9946$ & $0.9829$ & $0.9821$ & $0.9820$ & $0.9820$ \\
\hline
\end{tabular}
%\end{center}
\end{table}

Figure \ref{fig:Ex3_Fig} shows the error distribution of the errors. For $\delta=10^{-6}$, the curves look similar to the ones before for well-separated simple eigenvalues when $\epsilon\le 10^{-10}$. The curves for $\epsilon=10^{-8}$ and $\epsilon=10^{-6}$ are shaped differently because $\epsilon$ is too large and $\blambda^{(1)}$ does not behave
 anymore as a simple eigenvalue. For $\delta=10^{-12}$, all curves have this shape.
 We checked that this behaviour continues down to $\delta=0$. In summary, it can be observed that multiple semisimple eigenvalues are computed with roughly the
 same accuracy as simple eigenvalues.
   
\begin{figure}[htb!]
    \centering
 \includegraphics[width=6.4cm]{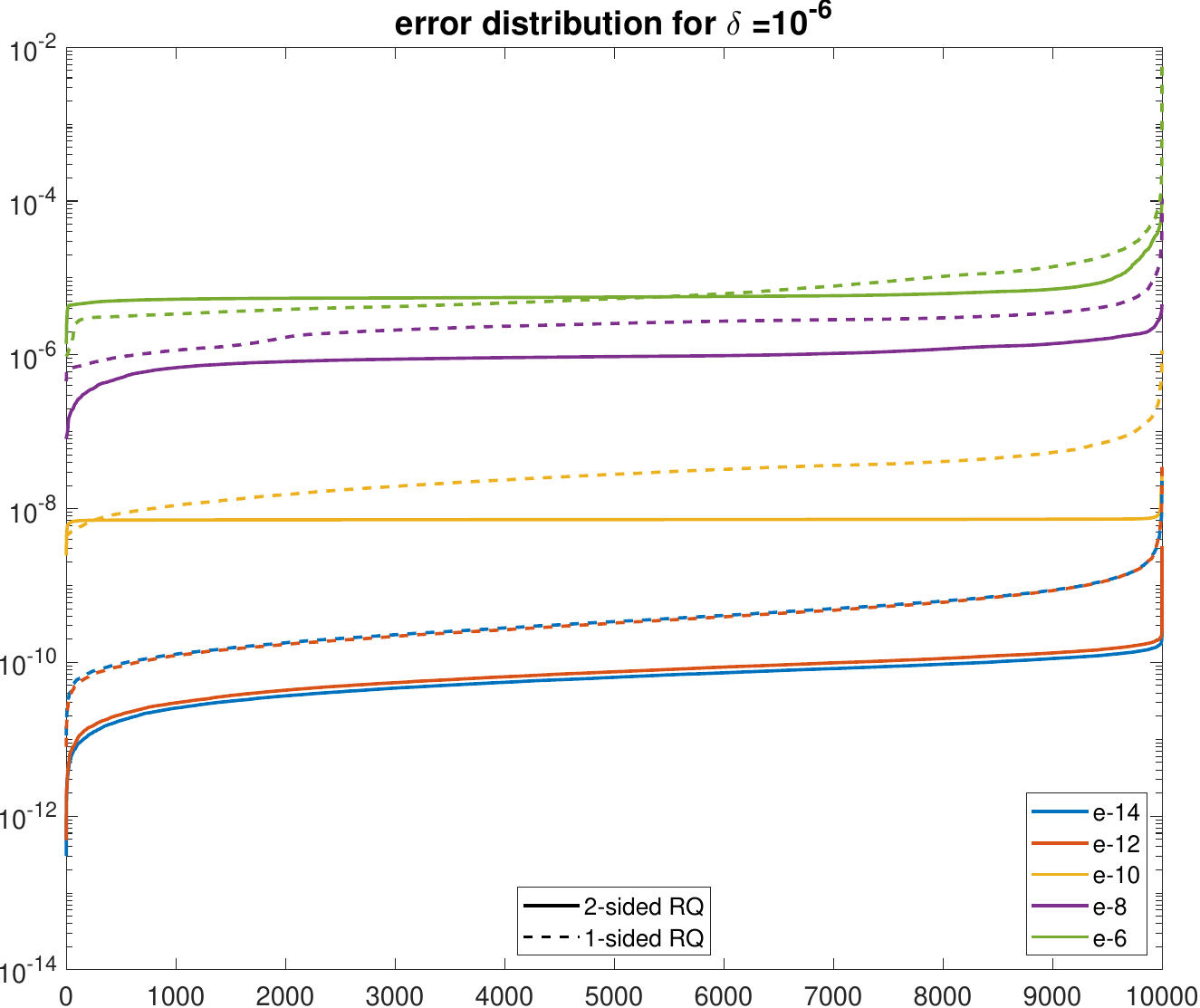}\ \
 \includegraphics[width=6.4cm]{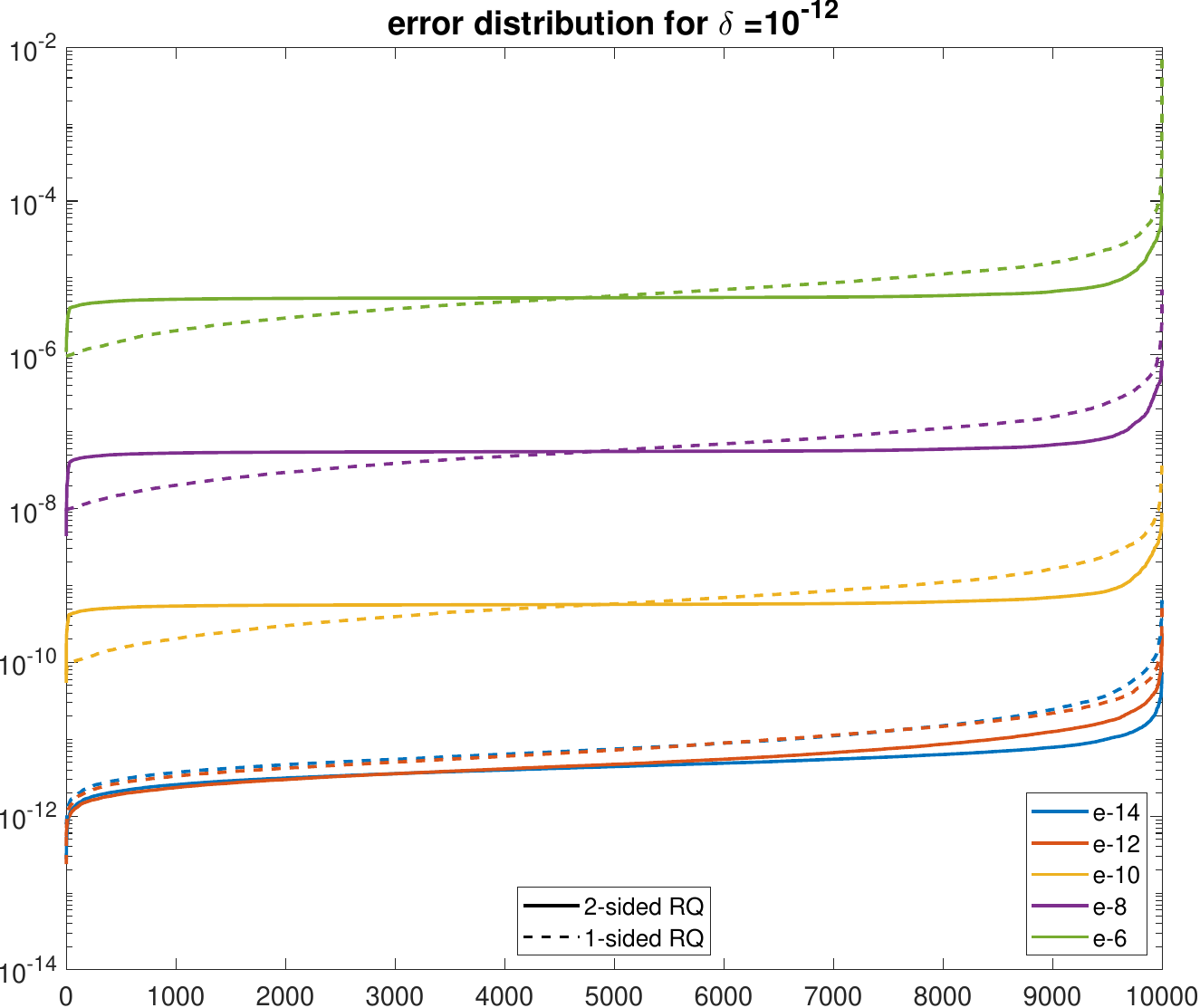}
    \caption{Distribution of absolute errors for eigenvalue $\blambda^{(1)}$ from 
    Example \ref{ex:ex3} for $\delta=10^{-6}$ (left) and $\delta=10^{-12}$ (right), using one-sided (dashed lines) and two-sided (solid lines) Rayleigh quotients.}
    \label{fig:Ex3_Fig}  
\end{figure}

\end{example}

\begin{example}\rm\label{ex:ex5}
    We now consider $A_i=XB_iX^{-1}$ with random $X$, such that $\kappa_2(X)=10$, and
    \[B_1=\left[\begin{matrix}1 & 1 & & & & \cr & 1 & 1 & & & \cr & & 1 & & & \cr & & & 2 & & \cr & & & & 3 & \cr & & & & & 4\end{matrix}\right],\quad
    B_2=\left[\begin{matrix}1 & 1 & & & & \cr & 1 & 1 & & & \cr & & 1 & & & \cr & & & 4 & & \cr & & & & 3 & \cr & & & & & 2\end{matrix}\right].\]
    In exact arithmetic, the eigenvalue $\blambda^{(1)}=(1,1)$  is multiple and \emph{not} semisimple. In the presence of roundoff error, this multiplicity is broken but we can expect the condition numbers of the eigenvalue $\blambda^{(1)}$  and the eigenvector matrix 
    to become very high. In particular, the results of Theorem~\ref{thm:semisimple} do not apply in this situation.
    
    As $\widetilde A(\mu)$ is diagonalizable, one can still attempt to apply Algorithm~\ref{alg:RJEA}. Figure~\ref{fig:Ex5_Fig} shows that the error for the triple eigenvalue $\blambda^{(1)}$ is on the level of $\epsilon^{1/3}$, the same level of error that can be expected when perturbing an eigenvalue 
    in a Jordan block of size three in a single matrix.
    On the other hand, the error for $\blambda^{(4)} = (2,4)$
    behaves in the same way as the errors for simple eigenvalues in the other examples. 

\begin{figure}[htb!]
    \centering
 \includegraphics[width=6.4cm]{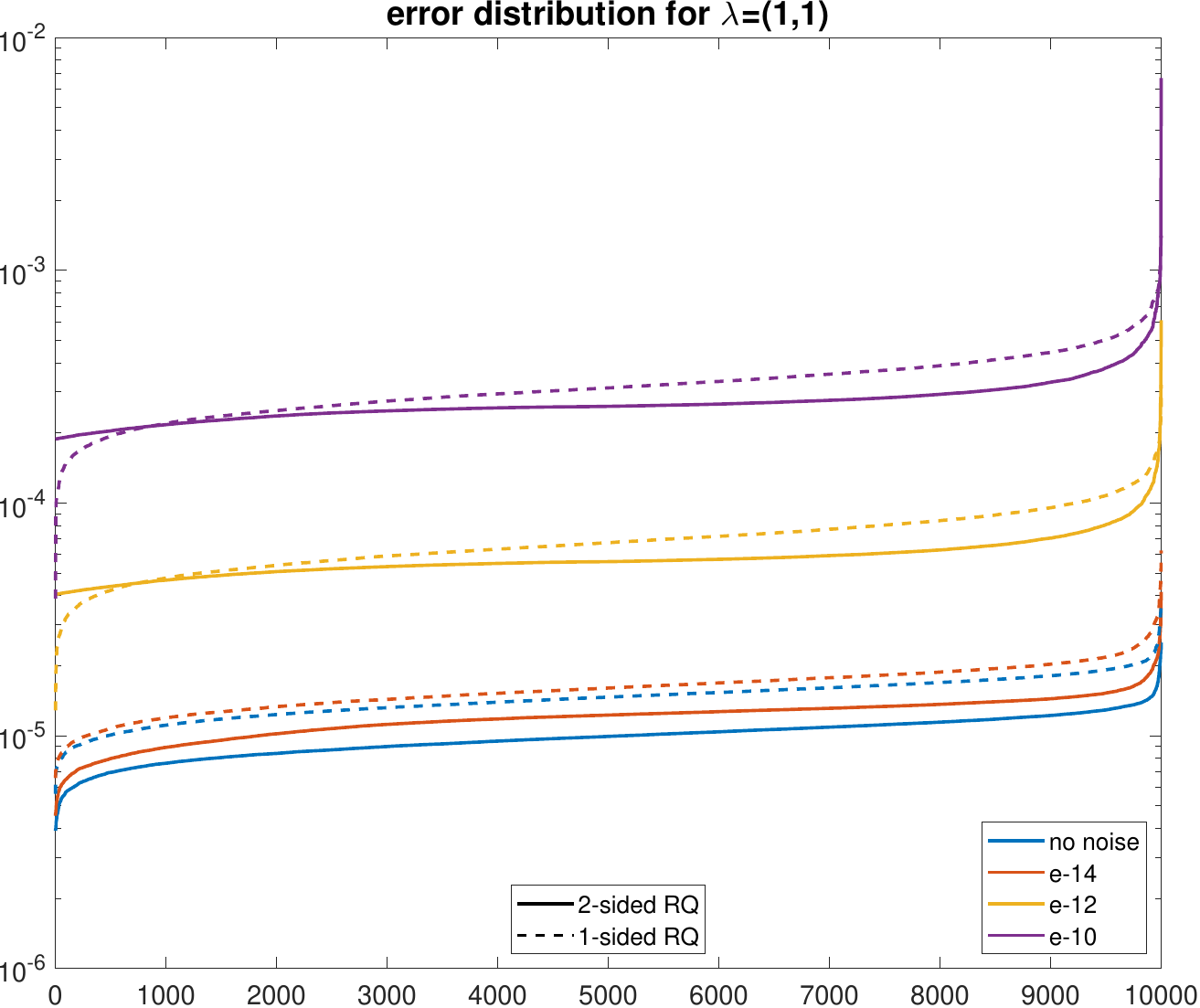}\ \
 \includegraphics[width=6.4cm]{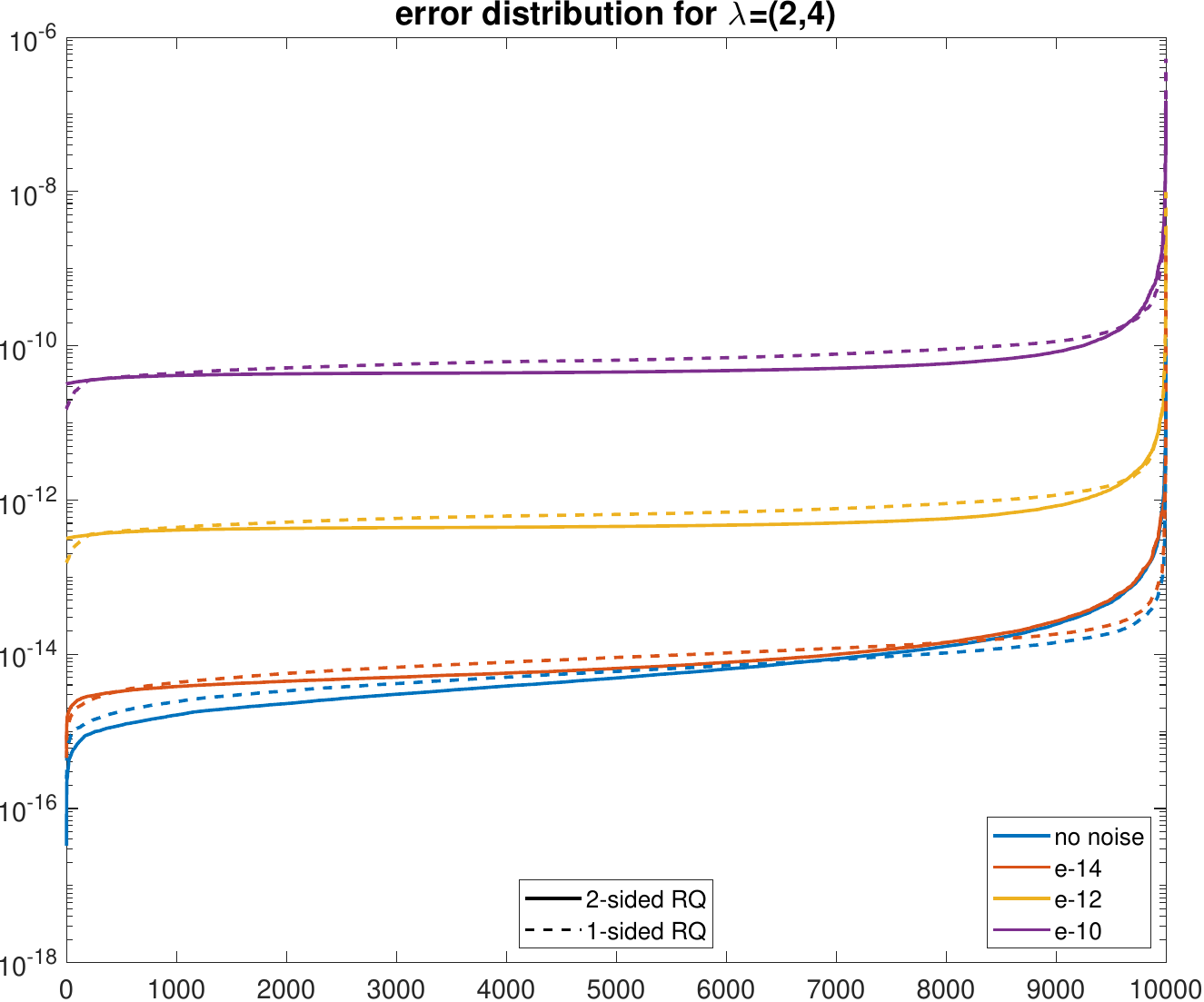}
    \caption{Distribution of absolute errors for eigenvalues $\blambda^{(1)}$ (left) and $\blambda^{(4)}$ (right) from Example \ref{ex:ex5}, using one-sided (dashed lines) and two-sided (solid lines) Rayleigh quotients.}
    \label{fig:Ex5_Fig}  
\end{figure}
\end{example}

%%%%%%%%%%%%%%%%%%%%%%%%%%%%%%%%%%%%%%%%%%%%%%%%%%%%%%%%%%%%%%%%%%%%%%%%%%%%%%%%%%%%%%%%%%%
\section{Multiparameter eigenvalue problems}\label{sec:mep}

This section is concerned with one of the applications that motivated this work.
A \emph{$d$-parameter eigenvalue problem} has the form
\begin{equation}\label{eq:mep}
    A_{i0}x_i = \lambda_1 A_{i1}x_i + \cdots + \lambda_d A_{id}x_i, \quad i=1,\ldots,d,
\end{equation}
where $A_{ij}$ is an $n_i\times n_i$ complex matrix and $x_i\ne 0$ for $i=1,\ldots,d$.
When~\eqref{eq:mep} is satisfied,
a $d$-tuple $\blambda=(\lambda_1,\ldots,\lambda_d)\in\CC^d$ is called an \emph{eigenvalue} and 
$x_1\otimes \cdots \otimes x_d$ is a corresponding \emph{eigenvector}. 
Generically, a multiparameter eigenvalue problem~\eqref{eq:mep} has $N=n_1\cdots n_d$ eigenvalues that
are roots of a system of $d$ multivariate characteristic polynomials
$p_i( \lambda_1,\ldots,\lambda_d):=\det(A_{i0}-\lambda_1 A_{i1} -\cdots -\lambda_d A_{id})=0$
for $i=1,\ldots,d$. 

The problem~\eqref{eq:mep}
is closely related to a system of $d$ generalized eigenvalue problems 
\begin{equation}\label{eq:Delta}
    \Delta_i z =\lambda_i \Delta_0z, \quad i=1,\ldots,d,
\end{equation}
with $z=x_1\otimes\cdots\otimes x_d$ and the $N\times N$ matrices
$$\Delta_0=\left|\begin{matrix}A_{11} & \cdots & A_{1d}\cr
\vdots &  & \vdots \cr 
A_{d1} & \cdots & A_{dd}\end{matrix}\right|_\otimes
=\sum_{\sigma\in S_d}{\rm sgn}(\sigma) \, A_{1\sigma_1}\otimes A_{2\sigma_2}\otimes \cdots \otimes A_{d\sigma_d},
$$
$$\Delta_i=\left|\begin{matrix}A_{11} & \cdots & A_{1,i-1} & A_{10} & A_{1,i+1} & \cdots & A_{1d}\cr
\vdots &  & \vdots & \vdots & \vdots & & \vdots \cr 
A_{d1} & \cdots & A_{d,i-1} & A_{d0} & A_{d,i+1} & \cdots & A_{dd}\end{matrix}\right|_\otimes,\quad i=1,\ldots,d,
$$
which are called \emph{operator determinants}~\cite{AtkinsonBook}.
In the following we will assume that
$\Delta_0$ is invertible, in which case~\eqref{eq:mep} is called \emph{regular} and the matrices 
$\Gamma_i:=\Delta_0^{-1}\Delta_i$ for $i=1,\ldots,d$ commute. 
If $N$ is not too large, 
then a standard approach to solve \eqref{eq:mep} is to explicitly compute the matrices $\Gamma_1,\ldots,\Gamma_d$ and then solve the joint eigenvalue problem, see, e.g., \cite{SlivnikTomsic, HKP_JD2EP}.
In the following, we will discuss randomization-based methods.
\smallskip

\textbf{The right-definite case:}
If all matrices $A_{ij}$ defining~\eqref{eq:mep} are real and symmetric, and  $\Delta_0$ is positive definite, the problem~\eqref{eq:mep} is called \emph{right-definite}. This allows us to perform the Cholesky decomposition $\Delta_0=VV^T$ and transform~\eqref{eq:Delta} into the joint eigenvalue problem
$D_iw=\lambda_i w$, $i=1,\ldots,d$, where $D_i=V^{-1}\Delta_i V^{-T}$ is symmetric and $w=V^Tz$. In exact arithmetic, the matrices $D_1,\ldots,D_d$ commute and can be 
simultaneously diagonalized by an orthogonal matrix $Q$; see also~\cite{SlivnikTomsic}.
In turn, the right-definite case allows one to apply the randomized joint diagonalization method from~\cite{HeKressner}, which diagonalizes a random linear combination $D(\mu)$ to produce the orthogonal transformation $Q$. In~\cite{HeKressner}, this approach has been shown to be robust to noise. When $\Delta_0$ is ill-conditioned, the recent randomized approach for simultaneous diagonalization by congruence in \cite{HeKressnerRSDC} can be more reliable.
\smallskip

\textbf{The non-definite regular case:}
In many applications, $\Delta_0$ is invertible but not symmetric definite. As the matrices $\Gamma_i=\Delta_0^{-1}\Delta_i$ commute, they admit a simultaneous Schur factorization, that is, they can be triangularized simultaneously by the same unitary similarity transformation. The matrix pencils
$(\Delta_1,\Delta_0),\ldots,(\Delta_d,\Delta_0)$ admit a simultaneous generalized Schur factorization, that is, there are unitary matrices $Q$ and $Z$ such that every
$Q\Delta_iZ$ is upper triangular for $i=0,\ldots,d$. The approach in~\cite[Algorithm 2.3]{HKP_JD2EP} for two-parameter eigenvalue problems first applies the QZ algorithm to the pencil $(\Delta_1,\Delta_0)$, reorders the generalized Schur form so that (nearly) multiple eigenvalues are grouped together, and then applies $Q$ and $Z$ to $\Delta_2$. This yields block upper triangular matrices and the eigenvalues are extracted from the diagonal blocks of $Q \Delta_0 Z$, $Q \Delta_1 Z$, $Q \Delta_2 Z$. A problem with this approach is that the correct reordering of the generalized Schur form is difficult to ensure in the presence of noise. Also, this procedure has to be repeatedly applied for problems with more than two parameters.

An alternative to Schur forms is to use eigenvectors.
Let us take a random linear combination 
$\Delta(\mu) = \sum_{i=1}^d \mu_i \Delta_i$.
Then we can compute the left and right eigenvector matrices either of $\Delta_0^{-1}\Delta(\mu)$ or of the pencil $(\Delta(\mu),\Delta_0)$, and apply the resulting transformation to the other generalized eigenvalue
problems from~\eqref{eq:Delta}. The use of pencils avoids the multiplication with $\Delta_0^{-1}$ and corresponds to the use of generalized Rayleigh quotients.
Specifically, the \emph{generalized one-sided Rayleigh quotients} take the form
\[
\rho(x_i,\Delta_k,\Delta_0) = \frac{x_i^*\Delta_k x_i}{x_i^* \Delta_0 x_i},\quad k=1,\ldots,d,\ i=1,\ldots,N,
\]
while the \emph{generalized two-sided Rayleigh quotients} are given by
\[
\rho(x_i,y_i,\Delta_k,\Delta_0) = \frac{y_i^*\Delta_k x_i}{y_i^* \Delta_0 x_i},\quad k=1,\ldots,d,\ i=1,\ldots,N.
\]

\textbf{The singular case:}
 It can happen that every linear combination of the matrices $\Delta_0,\Delta_1,\ldots,\Delta_d$ 
is singular, but~\eqref{eq:mep} still has finitely many solutions that are roots of $d$ multivariate characteristic polynomials.
Such problems are called \emph{singular multiparameter eigenvalue problems}, resulting in a system of $d$ singular matrix pencils~\eqref{eq:Delta}. The classical approach to deal with a singular pencil proceeds by extracting the regular part by the 
staircase algorithm \cite{VD79} and using the QZ algorithm.
Recently, randomized methods for singular pencils have been proposed that do not require the  use of the staircase method \cite{HMP_SingGep, HMP_SingGep2, DanielIvana}. Unfortunately, none of these methods has been so far 
generalized to a family of singular pencils~\eqref{eq:Delta}. The only numerical method
 available is a generalized staircase-type algorithm \cite{MP_Q2EP}. The regular problem extracted by this method can be addressed with the algorithm proposed in this work.

\subsection{Solution of multiparameter eigenvalue problems with Algorithm~\ref{alg:RJEA}}

We suggest to solve a multiparameter eigenvalue problem in the following way. First, compute eigenvalues and, more importantly, eigenvectors 
of $\Gamma(\mu) = \Delta_0^{-1}\Delta(\mu)$, where $\Delta(\mu)$ is a random linear combination of matrices $\Delta_1,\ldots,\Delta_d$. For each right and left eigenvector $z$ and $w$ we can then compute the corresponding eigenvalue
$(\lambda_1,\ldots,\lambda_d)$ from two-sided Rayleigh quotients because $z$ and $w$ are common eigenvectors for all matrices $\Gamma_1,\ldots,\Gamma_d$.

We note that for the above we do not have to compute all operator determinants $\Delta_0,\ldots,\Delta_d$ explicitly. This is important because these matrices can be very large and consume a lot of memory. 
We explicitly need just $\Delta_0$ and $\Delta(\mu)$, which we can compute from the matrices in \eqref{eq:mep} and values $\mu_1,\ldots,\mu_d$. 
Once we have an eigenvector $z$ we can 
think of it as a vectorization of a $d$-dimensional tensor ${\cal Z}\in\CC^{n_1\times \cdots \times n_d}$, i.e., $z={\rm vec}({\cal Z})$. Multiplication by $\Delta_0$ (and similarly with $\Delta_1,\ldots,\Delta_d$) in the 
computation of Rayleigh quotients can be then efficiently performed 
as $\Delta_0 z ={\rm vec}({\cal W})$, where
\[{\cal W} = \sum_{\sigma\in S_d}{\rm sgn}(\sigma) \, A_{1\sigma_1}\times_1 A_{2\sigma_2}\times_2 \cdots A_{d\sigma_d} \times_d {\cal Z}\]
and $\times_1,\ldots,\times_d$ are multiplications of tensor by matrices in directions $1,\ldots,d$, for details see, e.g., \cite{KoldaBader_Tensors}.

\subsection{Numerical examples}

\begin{example}\rm\label{ex:ex7}
Recently~\cite{Eisenmann24}, it was exposed that the numerical solver for multiparameter eigenvalue problems in \cite{multipareig_2023} sometimes fails to find all eigenvalues of random three-parameter eigenvalue problems. 
The observed failures can be attributed to
the method for computing joint eigenvalues in~\cite{multipareig_2023}, which is 
based on the generalized Schur form followed by a simple clustering. In the new release~\cite{multipareig_28} 
this is replaced by the method based on Algorithm \ref{alg:RJEA} from
this paper. %that uses two-sided 
%Rayleigh quotients as proposed in this paper. 
The new solver is much faster and computes eigenvalues of the problems reported in \cite{Eisenmann24} more accurately.

To demonstrate this, we constructed three-parameter eigenvalue problems 
in a similar way as in \cite{Eisenmann24}. We take $n\times n$ matrices 
\begin{equation}\label{eq:eisenmann}
  A_{ij}=Q_{ij}D_{ij}Q_{ij}^T+\delta_{ij}I_n,\quad i=1,2,3,\ j=0,1,2,3,
\end{equation}  
where $Q_{ij}$ is a random orthogonal matrix (constructed from the QR decomposition of
a Gaussian random matrix), $D_{ij}$ is a diagonal matrix whose diagonal elements
are random values uniformly distributed in $[-\frac{1}{2n},\frac{1}{2n}]$, and $\delta_{ij}$ is the Kronecker delta.
% The observed failures are due to
% the method for computing joint eigenvalues in~\cite{multipareig_2023}, which is 
% based on the generalized Schur form followed by a simple clustering. In the updated version~\cite{multipareig_28} 
% this is replaced by the method that uses two-sided 
% Rayleigh quotients as proposed in this paper. The new method is much faster and computes eigenvalues of the above random three-parameter eigenvalue problems more accurately.
For each size $n=4,\ldots,16$ we generated 10 different problems of form
\eqref{eq:eisenmann} and for each computed the largest error of obtained eigenpairs. We compared 
{\tt threepareig}, which is a Matlab solver for three-parameter eigenvalue problems in MultiParEig, from 
\cite{multipareig_2023} and the updated version \cite{multipareig_28}.
The results are presented in Figure \ref{fig:Ex7_Fig}. If we compare the two-sided (2S) and one-sided (1S) versions, we see that the two-sided version is slightly more expensive but much more accurate. 

\begin{figure}[htb!]
    \centering
 \includegraphics[width=6.4cm]{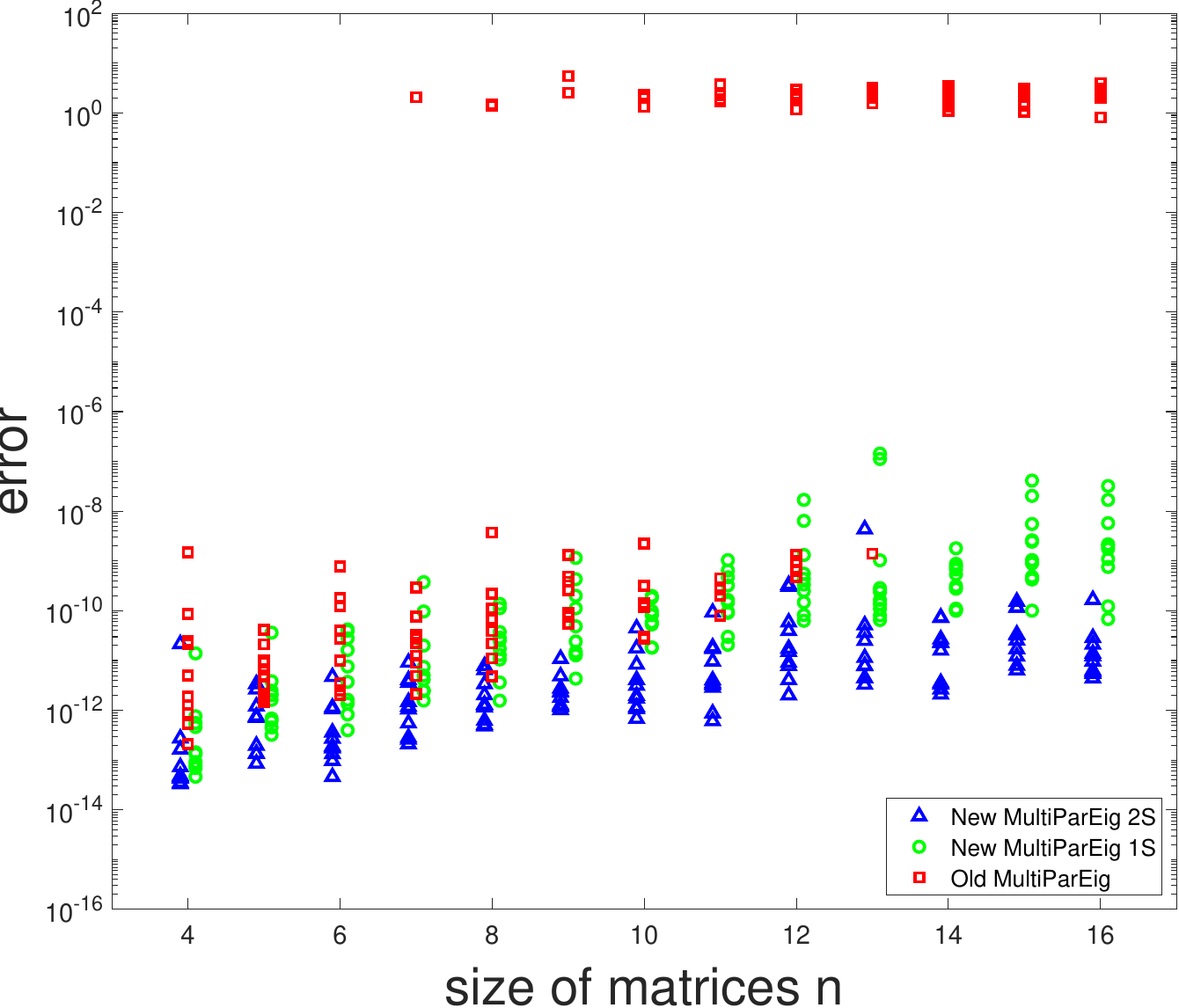}\ \
 \includegraphics[width=6.4cm]{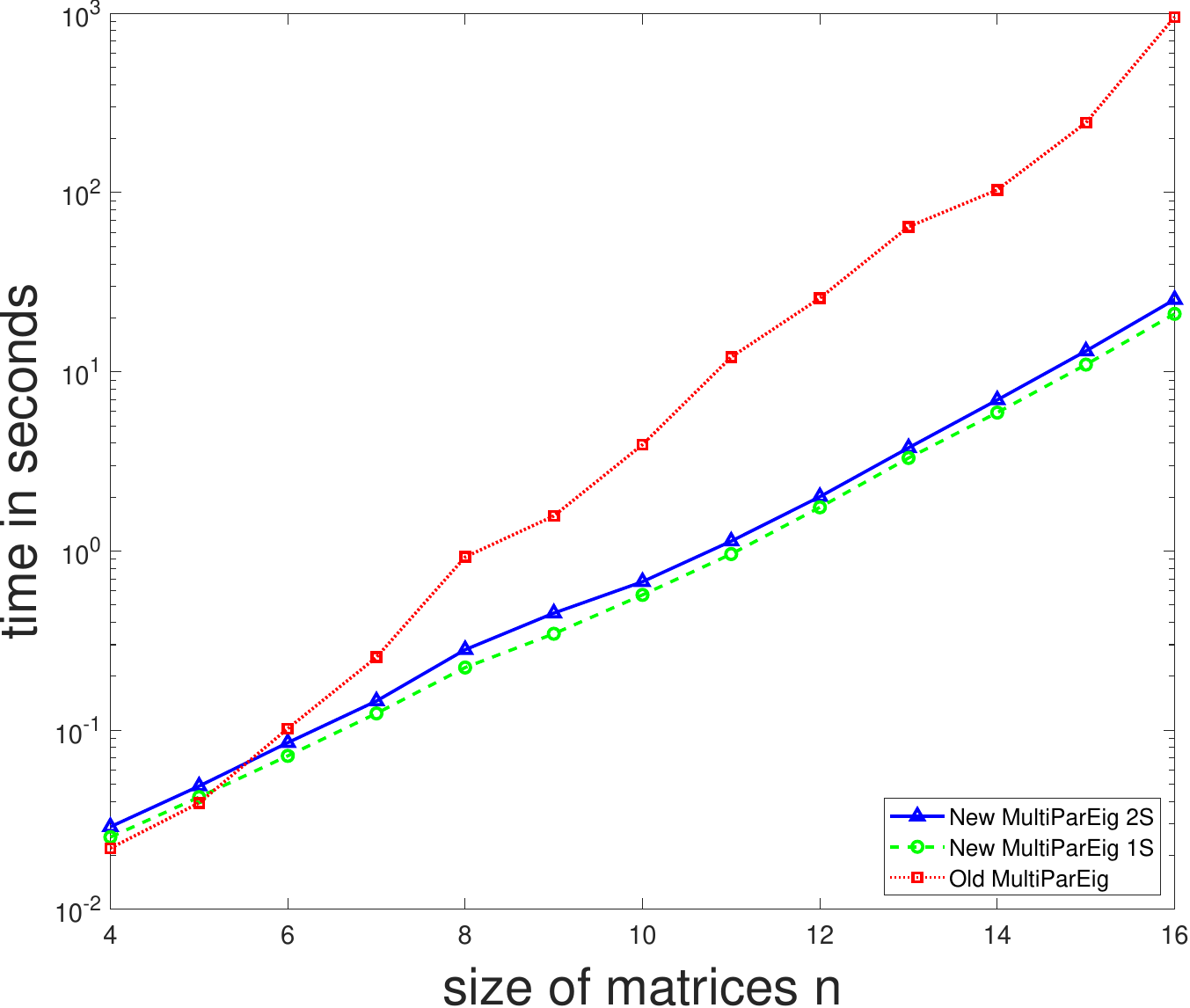}
    \caption{Comparison of old and new solver for three-parameter eigenvalue problems from \cite{multipareig_2023}
    and \cite{multipareig_28} on problems from Example \ref{ex:ex7}. The left figure presents the maximal error of computed eigenpairs, while the right figure compares computational times.}
    \label{fig:Ex7_Fig}  
\end{figure}

\end{example}

\begin{example}\rm\label{ex:ex6} Wave propagation in an elastic layer that
is in contact with another medium of infinite extent at one of the surfaces can be modelled by 
a nonlinear eigenvalue problem of the form
\begin{equation}\label{eq:NEP_Leaky}
(-k^2E_0+(\textrm{i} k)E_1-E_2+\omega^2 M+\sqrt{\kappa_1^2-k^2}R_1+\sqrt{\kappa_2^2-k^2}R_2)u=0,
\end{equation}
where $E_0,E_1,E_2,M,R_1,R_2$ are $n\times n$ matrices,
whose solutions $(\omega,k)$ give dispersion curves, where $\omega$ is the frequency and $k$ is the wavenumber.

Recently \cite{GPKJ_Embedded_Leaky}, it was shown that for a given $\omega$ we can get the corresponding solutions $k$ of \eqref{eq:NEP_Leaky} by solving a four-parameter eigenvalue problem 
    \begin{align*}
      \left(-E_2+\omega^2 M + \textrm{i} k E_1 + \textrm{i} \eta_1 R_1 + \textrm{i} \eta_2 R_2 + \xi E_0\right)u & =0\\
      \left(\left[\begin{matrix} 0 & -\eta_1^2 \cr 1 & 0 \end{matrix}\right]
      + \textrm{i} \eta_1 \left[\begin{matrix} 1 & 0 \cr 0 & 1 \end{matrix}\right]
      + \xi \left[\begin{matrix} 0 & -1 \cr 0 & 0 \end{matrix}\right]\right) x_1                                          & = 0 \\
      \left(\left[\begin{matrix} 0 & -\eta_2^2 \cr 1 & 0 \end{matrix}\right]
      + \textrm{i} \eta_2 \left[\begin{matrix} 1 & 0 \cr 0 & 1 \end{matrix}\right]
      + \xi \left[\begin{matrix} 0 & -1 \cr 0 & 0 \end{matrix}\right]\right) x_2                                          & = 0 \\
      \left(\left[\begin{matrix} 0 & 0 \cr 0 & 1 \end{matrix}\right]
      + \textrm{i} k \left[\begin{matrix} 0 & 1 \cr 1 & 0 \end{matrix}\right]
      + \xi \left[\begin{matrix} 1 & 0 \cr 0 & 0 \end{matrix}\right]\right) x_3                                          & = 0, 
    \end{align*}
  where $\eta^2_1=\kappa_1 - k^2$, $\eta^2_2=\kappa_2^2 - k^2$, $\xi=-k^2$, 
  and $x_1,x_2,x_3\neq 0$. This leads to $\Delta$-matrices
  of size $8n\times 8n$ such that $\Delta_0^{-1}\Delta_1,\ldots,\Delta_0^{-1}\Delta_4$
  commute, for more details, see \cite{GPKJ_Embedded_Leaky}.

  For a particular problem of a brass plate coupled to infinite Teflon, using the same 
  matrices of size $45\times 45$ in \eqref{eq:NEP_Leaky} as in \cite[Section 6.2]{GPKJ_Embedded_Leaky}, 
  we computed the points on the dispersion curves
  for $150$ frequencies up to $3$Mhz using the old solver for multiparameter eigenvalue problems 
  in \cite{multipareig_2023} and the new solver in \cite{multipareig_28} that uses two-sided Rayleigh quotients from eigenvectors 
  of a random linear combination $\Delta_0^{-1}(\mu_1 \Delta_1+\cdots+\mu_4\Delta_4)$. 
  The comparison is in Figure \ref{fig:Leaky}, where
  we see that the old algorithm could not compute all the points on the two horizontal dispersion curves accurately.
  
  \begin{figure}[htb!]
    \centering
 \includegraphics[width=6.4cm]{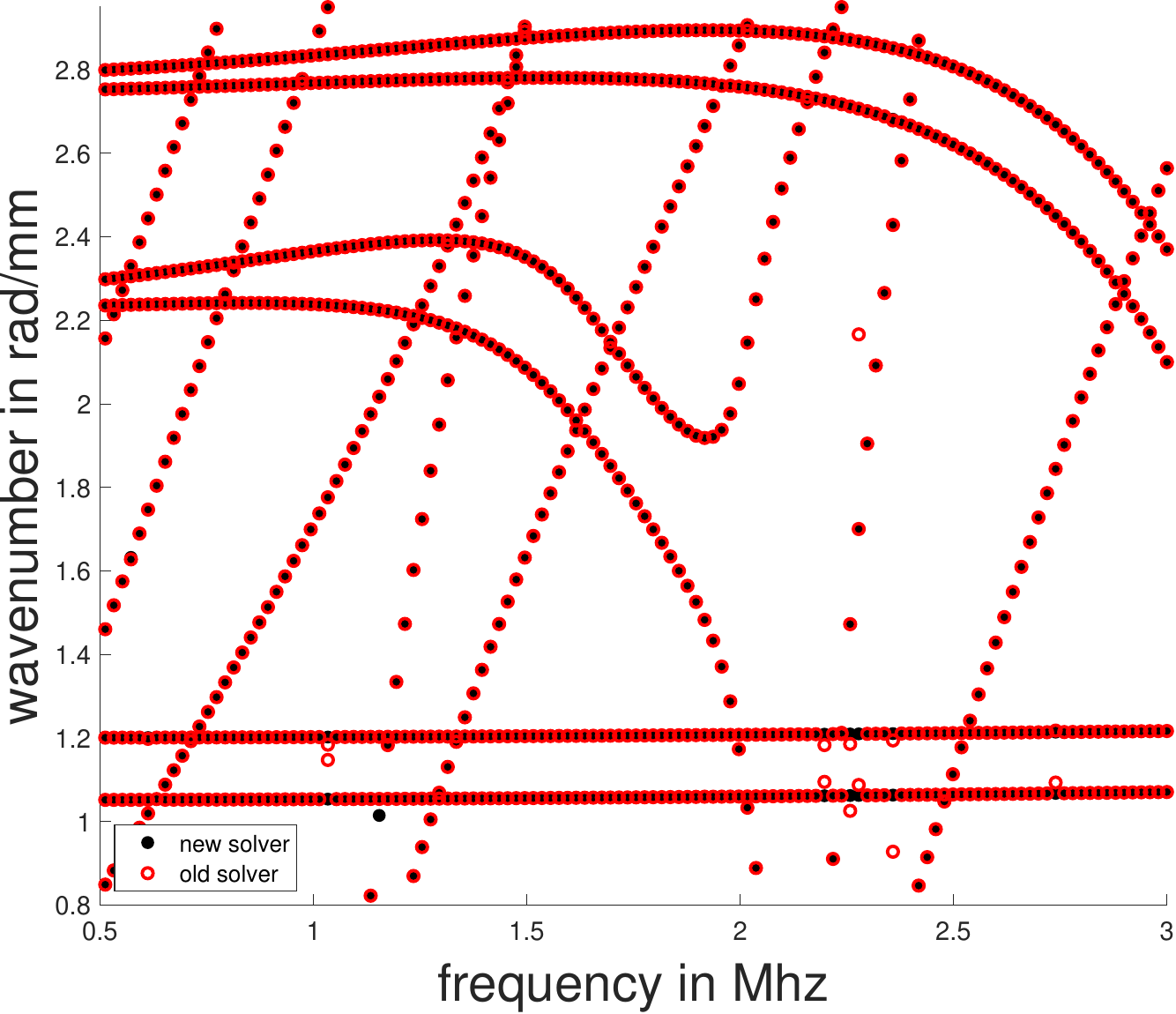}
    \caption{Points on dispersion curves related to the problem of wave propagation of a 
    brass plate coupled to infinite Teflon from \cite{GPKJ_Embedded_Leaky}, computed
    by the old and updated solver for multiparameter eigenvalue problems.}
    \label{fig:Leaky}  
\end{figure}

\end{example}

%%%%%%%%%%%%%%%%%%%%%%%%%%%%%%%%%%%%%%%%%%%%%%%%%%%%%%%%%%%%%%%%%%%%%%%%%%%%%%%%%%%%%%%%%%%

\section{Roots of polynomial systems}\label{sec:poly_roots}

Another important source of problems leading to joint eigenvalue problems for a commuting family of matrices are eigenvector-based methods for solving multivariate polynomial equations; see, e.g., \cite{CorlessGianniTrager97,MollerStetter_NumMath, Stetter_SIAMBook}.
In this section, we provide a brief introduction to such an approach and demonstrate the accuracy of the proposed two-sided Rayleigh quotient method for joint eigenvalue problems that arise in polynomial root finding.

Let $\mathcal{P}^s$ denote the set of polynomials over $\mathbb{C}$ in $s$ variables $x_1,\ldots,x_s$. Given a system $P$ of $n$ polynomials $p_i \in \mathcal{P}^s, i = 1,\ldots,n$, a fundamental task of computer algebra and numerical polynomial algebra is to compute their common roots, or the so-called algebraic variety $V:= \{x \in \mathbb{C}^s: p_i(x) = 0, i=1,\ldots,n\}$ of $P$. 
%In order to compute the algebraic variety $V$ of $P$, 
For this purpose,
we need to study the vector space $\mathcal{R}[\mathcal{I}] := \mathcal{P}^s / \mathcal{I}$ of the \emph{quotient ring} or the \emph{residue class ring modulo the ideal} $\mathcal{I}$ generated by $P$, which 
%ere the ideal $\mathcal{I}$ 
is defined as:
\[\mathcal{I} = \langle p_1,\ldots,p_n \rangle := \{ c_ip_i: c_i \in \mathcal{P}^s \}.\]
 When the algebraic variety $V$ is $0$-dimensional, i.e., $V$ only consists of finitely many points, the vector space $\mathcal{R}[\mathcal{I}]$ has the dimension equal to the number of common roots (counting their multiplicities). This allows us to perform computation of linear algebra if a basis $\mathcal{B}$ of  $\mathcal{R}[\mathcal{I}]$ is provided. Let $M_g$ denote the matrix associated with the linear operation of polynomial multiplication with a given polynomial $g \in \mathcal{P}^s$ in the vector space $\mathcal{R}[\mathcal{I}]$ with respect to the basis $\mathcal{B}$. It turns out that the family $\{M_g: g \in \mathcal{P}^s\}$ %, the family 
 of all the possible multiplication matrices is a commuting family since polynomial multiplication is commutative \cite{Stetter_SIAMBook}. For illustration purposes, we assume all the roots $z_1,\ldots,z_m$ are simple. It can be shown that the joint eigenvectors for this commuting family are the Lagrange polynomials, which evaluate to $1$ on exactly one $z_i$ and $0$ on the other $z_j,j\neq i$, and that the corresponding eigenvalue for the matrix $M_g$ is $g(z_i)$.  Refer to \cite[Proposition 6.1]{grafTownsend24} for further details on Lagrange polynomials defined on the common roots of a given polynomial system.  Hence, the joint eigenvalues of the commuting family 
\[\mathcal{M} := \{M_{x_1},\ldots,M_{x_s}\}\]
are the common roots~\cite[Theorem 2.27]{Stetter_SIAMBook}. These multiplication matrices can be obtained through symbolic computation of the Gr\"{o}bner basis~\cite{GrobnerEVP}, direct computation of the normal forms from Macaulay matrices~\cite{Telen18}, and by leveraging the shift-invariant structure of the numerical basis of the null space of Macaulay matrices~\cite{Vermeerschthesis}.
For further details on numerical polynomial algebra, see, e.g., the classical textbook~\cite{Stetter_SIAMBook}.

We note that a similar approach with block Macaulay matrices that also requires joint eigenvalues 
of commuting matrices, was recently developed for rectangular multiparameter eigenvalue problems, for details see, e.g., \cite{VermeerschDeMoorLAA}. A rectangular multiparameter eigenvalue problem is 
different from \eqref{eq:mep}, a generic linear form is 
\begin{equation}\label{rectmep}
  B_{0} x = \lambda_1 B_{1} x + \cdots + \lambda_d B_{d} x,
\end{equation}
where $B_0,\ldots,B_d$ are rectangular matrices of size $(n+d-1)\times n$. If
$\lambda_1,\ldots,\lambda_d$ satisfy \eqref{rectmep} for a nonzero $x$, then 
$\blambda=(\lambda_1,\ldots,\lambda_d)$ is an eigenvalue. It is also possible to 
solve 
\eqref{rectmep} with tools for standard multiparameter eigenvalue problems of the form \eqref{eq:mep}, for 
details see \cite{HKP_NLAA_2023_RectMEP}.

\begin{example}\rm\label{ex:poly_sys} As we have already observed the advantage of the two-sided Rayleigh quotient over the one-sided one in Section \ref{sec:synthetic}, we only compare the two-sided Rayleigh quotient with Schur decomposition of one random linear combination ({\tt RSchur}) used in~\cite{Telen18} and Schur decomposition of the first matrix ({\tt Schur}) used in \cite{Vermeerschthesis}. Our comparison focuses on the commuting families generated from random polynomial systems of $3$ variables with maximum degree $10$ (resulting in $3$ $1000 \times 1000$ matrices named {\tt random})\footnote{We use the implementation in \cite{Telen18}, available at \url{https://github.com/simontelen/NormalForms}, to get the multiplication matrices.}, the polynomial system arising in the computation of general economic equilibrium models~\cite{rose} (resulting in $4$ $136 \times 136$ matrices named {\tt rose}), the  polynomial system in magnetism in physics~\cite{katsura} ( resulting in $7$ $128 \times 128$ matrices named {\tt katsura7}) and another system in economics~\cite[p. 148]{Morgan09} (resulting in $9$ $64 \times 64$ matrices named {\tt redeco8}).\footnote{Examples {\tt rose}, {\tt katsura7} and {\tt redeco8} are documented in PHCpack~\cite{PHCpack} and matlab codes for these examples are documented in~\cite{Vermeerschthesis}, available at \url{https://gitlab.esat.kuleuven.be/Christof.Vermeersch/macaulaylab-public}.} The commuting family in {\tt random} is generated using normal forms from Macaulay matrices~\cite{Telen18}, while % and the commuting families in 
in {\tt rose}, {\tt katsura7} and {\tt redeco8} %are generated using
the null space of Macaulay matrices~\cite{Vermeerschthesis} is used.  The error is measured by the residual in terms of the roots of the corresponding polynomial systems, averaged over $100$ runs. Note that in the original implementations in \cite{Vermeerschthesis} and \cite{Telen18}, there is an option for eigenvalue clustering. However, we observed significant errors due to clustering for {\tt RSchur}, so we excluded those results from our comparison. We also observed that generating the multiplication matrices is significantly more expensive than computing the joint eigenvalues. Therefore, only a comparison of the accuracy is included.

The results presented in Table \ref{table:polynomials} illustrate that the two-sided Rayleigh quotient method can achieve superior accuracy for both synthetic polynomial systems and polynomial systems from real-world applications. 
\begin{table}[!hbt!]
%\begin{center}
\caption{Polynomial system accuracy comparison}

\label{table:polynomials}
\small
\begin{tabular}{|c|S[table-format=1.2e3]|S[table-format=1.2e3]|S[table-format=1.2e3]|S[table-format=1.2e3]|}
\hline
 & {random} & {rose} &  {katsura7} & {redeco8}\\ 
\hline
Rschur & 1.31e-11 &  1.98e-08 & 1.74e-10 &3.09e-12\\
schur & 5.58e-12 & 1.12e-08 & 8.19e-10 &2.75e-12\\
RQ2 & 9.72e-14 & 9.26e-09 & 1.61e-11  &3.52e-12\\
\hline
\end{tabular}
%\end{center}
\end{table}
\end{example}

\begin{example}\rm\label{ex:ex8}
It is well known that we can compute roots of a univariate polynomial as eigenvalues of its companion matrix. 
In a similar way it is possible to linearize a system of two bivariate polynomials into
a two-parameter eigenvalue problem whose eigenvalues agree with the roots, see, e.g., \cite{PH_BiRoots_SISC}. 
We start from a 
system of two bivariate polynomials $p_i(x_1,x_2)=0$, $i=1,2$, and construct matrices $A_{ij}$
of a so-called \emph{determinantal representation} such that
$\det(A_{i0}-x_1 A_{i1}-x_2 A_{i2})=p_i(x_1,x_2)$. The eigenvalues of the obtained two-parameter eigenvalue
problem then give the roots of the initial polynomial system.

Recently \cite{grafTownsend24}, the stability of 
this approach and of several other methods from Section \ref{sec:poly_roots} was analyzed. Although the main result in \cite{grafTownsend24} is pessimistic and shows that such methods for computing roots of systems of bivariate polynomials are unstable,
we can show that by replacing the old solver for multiparameter eigenvalue problems with the new two-sided Rayleigh quotient method we get more accurate results. The results are presented in
Figure \ref{fig:Ex8_Fig}, where we recreate the example in 
\cite[Figure 1.(e)]{grafTownsend24}.

The polynomials in this example have the form
\begin{equation}\label{eq:GrafTownsend} p_i(x_1,x_2)=(x_i-\textstyle{\frac{1}{3}})^2 + \sigma \sum_{j=1}^2q_{ij}(x_j-\textstyle{\frac{1}{3}}),\quad i=1,2,
\end{equation}
where $\sigma>0$ and $q_{ij}$ are entries of a random $2\times 2$  real orthogonal matrix.
The system has a root at $(\frac{1}{3},\frac{1}{3})$.
Like in \cite{grafTownsend24}, we computed roots of 1000 random systems of the form
\eqref{eq:GrafTownsend} for each value of $\sigma$ and we report the median error of the
root $(\frac{1}{3},\frac{1}{3})$. 
As shown in \cite{grafTownsend24}, for small $\sigma>0$, the condition number of $(\frac{1}{3},\frac{1}{3})$ as a root of a polynomial system is ${\cal O}(\sigma^{-1})$, 
while the condition number of $(\frac{1}{3},\frac{1}{3})$ as an eigenvalue
of the related two-parameter eigenvalue problem is ${\cal O}(\sigma^{-2})$. Because of that,
while the accuracy of a stable method for computing roots of bivariate polynomials should be around the blue line on Figure \ref{fig:Ex8_Fig}, the accuracy of all methods from Section \ref{sec:poly_roots} is expected to be much worse and close to the red line. We see that the new solver gives much more accurate results that are even close to the blue line of stability if $\sigma$ is not too small.

\begin{figure}[htb!]
    \centering
 \includegraphics[width=6.4cm]{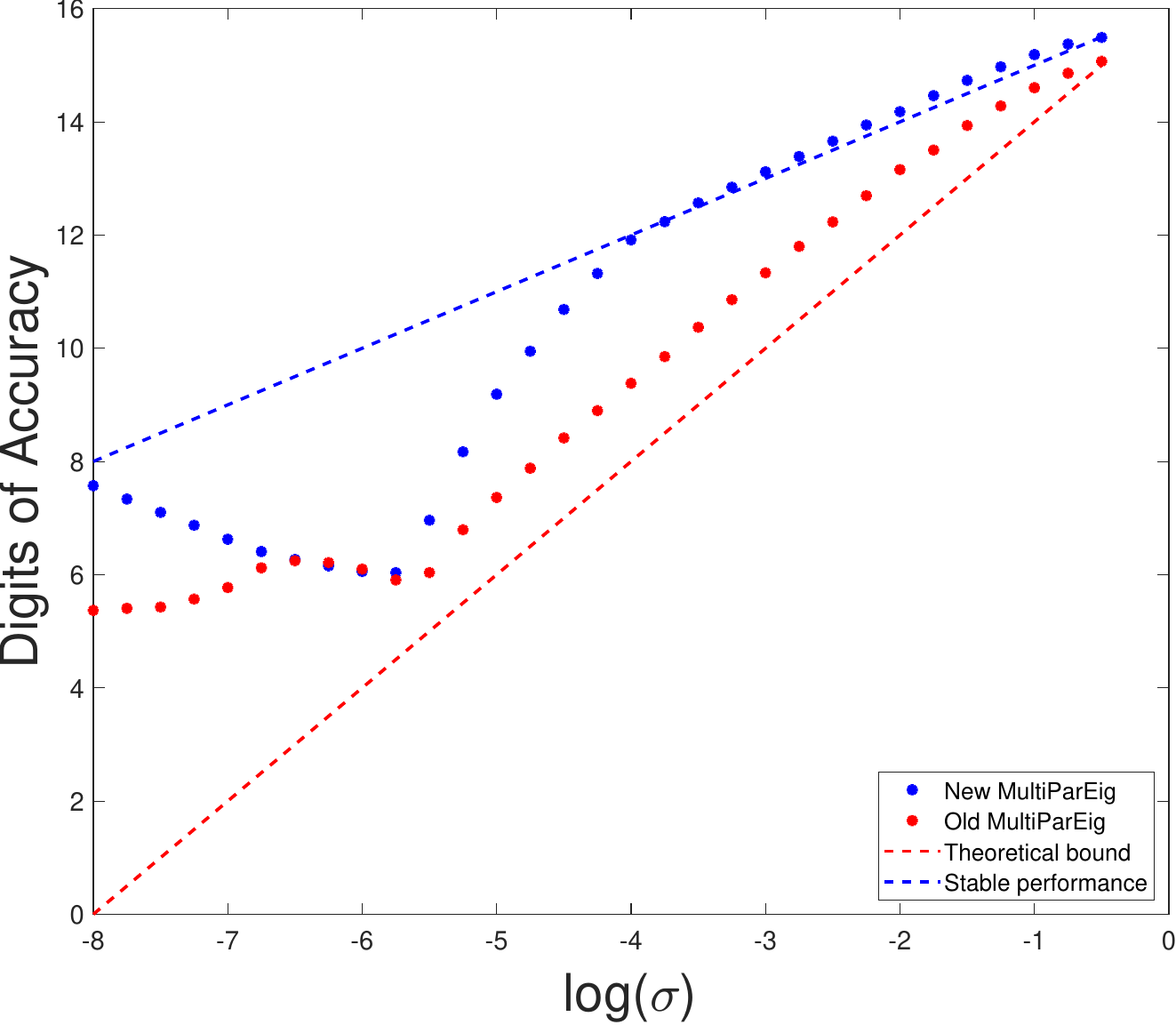}\ \
    \caption{Comparison of accuracy obtained with the old and new solver for two-parameter eigenvalue problems applied to 
    polynomials in \cite[Example 1.(e)]{grafTownsend24}.}
    \label{fig:Ex8_Fig}  
\end{figure}

\end{example}

%%%%%%%%%%%%%%%%%%%%%%%%%%%%%%%%%%%%%%%%%%%%%%%%%%%%%%%%%%%%%%%%%%%%%%%%%%%%%%%%%%%%%%%%%%%
\section{Conclusions}\label{sec:conc}

We proposed a simple numerical approach to compute joint eigenvalues of a family of (nearly) commuting matrices, combining eigenvectors of a random linear combination with  Rayleigh quotients. Our main results show that this approach, in particular the use of two-sided Rayleigh quotients, accurately computes well-conditioned 
semisimple joint eigenvalues with high probability. It still works satisfactorily in the presence of defective eigenvalues. 
Numerous numerical examples show that the method can be efficiently used in solvers for multiparameter 
eigenvalue problems and roots of systems of multivariate polynomials. The method is included in the new release
of MultiParEig, a Matlab toolbox for multiparameter eigenvalue problems \cite{multipareig_28}.
\medskip

\noindent\textbf{Acknowledgements}\quad BP would like to thank Hauke Gravenkamp and Daniel A. Kiefer for providing data for  
Example \ref{ex:ex6}.
\smallskip

% \noindent\textbf{Author Contributions}\quad All authors contributed equally to the paper.
% \smallskip

% \noindent\textbf{Funding}\quad {Funding was provided by Swiss National Science Foundation 
% research project (Grant No. 200021L\_192049) and Javna agencija za znanstvenoraziskovalno in inovacijsko dejavnost Republike Slovenije (Grant Nos. N1-0154 and P1-0194).}
% \smallskip

% \noindent\textbf{Code availability}\quad The code and data for numerical examples in this paper are available 
% at \url{https://github.com/borplestenjak/RandomJointEig}.
% \smallskip

% \section*{Declarations}

% \noindent\textbf{Conflict of interests}\quad Not applicable.

%%%%%%%%%%%%%%%%%%%%%%%%%%%%%%%%%%%%%%%%%%%%%%%%%%%%%
%%%%%%% Bibliography

%\bibliographystyle{abbrv} % We choose the "plain" reference style
%\bibliography{refs_BP, refs_MEP, refs_other, refs_perturbation} % Entries are in the refs.bib file

%%%%%%% Before submitting, comment the above and replace it by \input{main.bbl}, you can download bbl file from Logs and soutput files button/Other logs and files

%%%%%%%%%%%%%%%%%%%%%%%%%%%%%%%%%%%%%%%%%%%%%%%%%%%%%

\appendix
\section{Perturbations of invariant subspaces}

This appendix contains the eigenspace perturbation expansions needed for the analysis in Section~\ref{subsec:structural_bound}. These expansions are direct consequences of existing perturbation results in, e.g.,~\cite{KarowKressner2014} and~\cite[Section 5]{StewartSun}.

\begin{lemma}
\label{lemma:first_order_pert_subspace}
    Consider a matrix $A \in \mathbb{C}^{n \times n}$ transformed to a block diagonal form 
    \[ Y^*AX = \begin{bmatrix}
       A_{11} & 0 \\
        0 & A_{22}
    \end{bmatrix},\quad A_{11}\in\mathbb{C}^{p\times p},\ A_{22}\in\mathbb{C}^{(n-p)\times (n-p)},\]   
    such that $Y^* X =I$ and the spectra of $A_{11}$ and $A_{22}$ are disjoint. Partition
    $X= [X_1\ X_2] \in \mathbb{C}^{n \times n}$ and $Y= [Y_1\ Y_2] \in  \mathbb{C}^{n \times n}$
    accordingly. Then, given a perturbed matrix $\widetilde A = A + \epsilon E$ with $\epsilon >0$ sufficiently small,
    there exist bases $\widetilde X_1, \widetilde 
     Y_1 \in \mathbb C^{n\times p}$ for right/left invariant subspaces of $\widetilde A$ such that
    \begin{align*}
        \widetilde X_1   & = X_1   - \epsilon X_2 \mathbb{T}^{-1}( Y_2^*E X_1) + \mathcal{O}(\epsilon^2),\\
        \widetilde Y_1^* & = Y^*_1 - \epsilon \mathbb{T}_*^{-1}(X_2^*E^*Y_1)^*Y_2^* +  \mathcal{O}(\epsilon^2),
    \end{align*}
    where $\mathbb{T}: Z \mapsto A_{22}Z - Z A_{11}$, $\mathbb{T}_*: Z \mapsto A_{22}^*Z - Z A_{11}^*$, and $\widetilde Y_1^* \widetilde X_1 = I_p + \mathcal{O}(\epsilon^2)$.
\end{lemma}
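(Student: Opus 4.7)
The plan is to reduce the perturbation of invariant subspaces to a pair of Sylvester equations via the classical Riccati parametrization, along the lines of~\cite[Section 5]{StewartSun}. Since $Y^*X = I_n$, the map $M \mapsto Y^*MX$ is a similarity and reduces $\widetilde A$ to
\[
\widetilde T := Y^*\widetilde A X = \begin{bmatrix} A_{11} + \epsilon Y_1^* E X_1 & \epsilon Y_1^* E X_2 \\ \epsilon Y_2^* E X_1 & A_{22} + \epsilon Y_2^* E X_2 \end{bmatrix}.
\]
Any right invariant subspace of $\widetilde T$ close to $\mathrm{span}(e_1,\ldots,e_p)$ has a unique parametrization as the column span of $\begin{bmatrix} I_p \\ P \end{bmatrix}$ with small $P \in \mathbb{C}^{(n-p)\times p}$. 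The invariance condition is equivalent to the Riccati equation
\[
A_{22}P - PA_{11} + \epsilon(Y_2^*EX_2)P - \epsilon P(Y_1^*EX_1) - \epsilon P(Y_1^*EX_2)P = -\epsilon Y_2^*EX_1,
\]
whose linearization at $P = 0$, $\epsilon = 0$ is precisely the Sylvester operator $\mathbb{T}$, which is invertible because $A_{11}$ and $A_{22}$ have disjoint spectra. Writing $P = \epsilon P_1 + \mathcal{O}(\epsilon^2)$ and matching the $\mathcal{O}(\epsilon)$ terms gives $\mathbb{T}(P_1) = -Y_2^*EX_1$, so setting $\widetilde X_1 := X_1 + X_2 P$ yields the first expansion.

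The left eigenbasis is handled dually. Using $X^{-1} = Y^*$, one has $Y^*\widetilde A = \widetilde T Y^*$, and any left invariant subspace of $\widetilde A$ close to $\mathrm{row}(Y_1^*)$ is the row span of $[I_p\ R] Y^*$ for a unique small $R \in \mathbb{C}^{p\times(n-p)}$. The invariance condition $[I_p\ R]\widetilde T = M[I_p\ R]$, after eliminating $M$, becomes a Riccati equation whose first-order term reads $R_1 A_{22} - A_{11} R_1 = -Y_1^*EX_2$; taking adjoints identifies this with $\mathbb{T}_*(R_1^*) = -X_2^*E^*Y_1$, giving $\widetilde Y_1^* = Y_1^* + \epsilon R_1 Y_2^* + \mathcal{O}(\epsilon^2) = Y_1^* - \epsilon (\mathbb{T}_*^{-1}(X_2^*E^*Y_1))^* Y_2^* + \mathcal{O}(\epsilon^2)$. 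The normalization claim follows from a short computation using $Y_1^*X_1 = I_p$, $Y_1^*X_2 = 0$ and $Y_2^*X_1 = 0$:
\[
\widetilde Y_1^* \widetilde X_1 = Y_1^* X_1 + \epsilon Y_1^* X_2 P_1 + \epsilon R_1 Y_2^* X_1 + \mathcal{O}(\epsilon^2) = I_p + \mathcal{O}(\epsilon^2).
\]

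The only nontrivial point, and thus the main obstacle, is to promote the formal series to a genuine asymptotic expansion, i.e.\ to guarantee existence of a true small solution $P(\epsilon)$ whose leading Taylor coefficient is the $P_1$ identified above. The cleanest route is the implicit function theorem applied to the Riccati map $F(P,\epsilon)$: one has $F(0,0)=0$ and $\partial_P F(0,0) = \mathbb{T}$ is invertible, so a unique analytic branch $P(\epsilon)$ exists for sufficiently small $\epsilon$, with the prescribed leading coefficient. The same argument handles $R$. Alternatively, one can invoke the quantitative invariant-subspace perturbation bounds of~\cite{KarowKressner2014} or \cite[Section 5]{StewartSun} cited in the excerpt, which supply the $\mathcal{O}(\epsilon^2)$ remainder estimates directly in operator norm.
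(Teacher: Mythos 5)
Your proposal is correct and follows essentially the same route as the paper: both pass to the block form $\widetilde T = Y^*\widetilde A X$ (which is legitimate because $Y^*X=I$ makes this a similarity) and obtain the perturbed right invariant subspace from the Riccati parametrization $\begin{bmatrix} I_p \\ P \end{bmatrix}$, with the Sylvester operator $\mathbb{T}$ appearing as the linearization. The difference is one of granularity: the paper delegates the existence of the analytic branch $Z_E(\epsilon)$ and its first-order coefficient to the cited Corollary 2.4 of Karow--Kressner, and obtains the left expansion by applying that same result to $A^*$; you instead write out the Riccati equation explicitly, invoke the implicit function theorem at $(P,\epsilon)=(0,0)$ to justify the analytic solution branch, and handle the left subspace via the dual row-span parametrization $[I_p\ R]Y^*$ rather than by passing to the adjoint. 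Your version is therefore more self-contained but otherwise equivalent; the computations (the Riccati equation, the first-order terms, the adjoint identification $\mathbb{T}_*(R_1^*)=-X_2^*E^*Y_1$, and the normalization $\widetilde Y_1^*\widetilde X_1 = I_p + \mathcal{O}(\epsilon^2)$ via $Y_1^*X_2 = 0$, $Y_2^*X_1=0$) all check out.
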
 
\begin{proof} 
   Define
    \[ F := Y^*EX = \begin{bmatrix}
        Y_1 & Y_2
    \end{bmatrix}^* E \begin{bmatrix}
        X_1 & X_2
    \end{bmatrix} = \begin{bmatrix}
        F_{11} & F_{12}\\
        F_{21} & F_{22}
    \end{bmatrix}.\]
Applying~\cite[Corollary 2.4]{KarowKressner2014} to the matrix $B := Y^*AX$ shows that there is a basis 
$\big[ \genfrac{}{}{0pt}{}{I_p}{Z_E}\big]$ 
for a right invariant subspace of $B+\epsilon F$ such that
$Z_E = -\epsilon\mathbb{T}^{-1}(F_{21}) + \Ocal{2}$.
Because the matrices $A+\epsilon E$ and $B+\epsilon F$ are similar, it follows that 
\[\widetilde X_1    =  X \begin{bmatrix}
    I_p \\ Z_E
\end{bmatrix} = X_1   - \epsilon X_2 \mathbb{T}^{-1}(Y_2^*E X_1) + \mathcal{O}(\epsilon^2)\]
is a basis for a right invariant subspace of $A+\epsilon E$.

By applying the same argument to $A^*$ we obtain the analogous result for $\widetilde Y_1$.
Because of $Y_1^*X_2 = 0$ and $Y_2^*X_1=0$, the perturbation expansions for $\widetilde X_1,\widetilde Y_1$ imply that
$\widetilde Y_1^* \widetilde X_1 = I_p + \mathcal{O}(\epsilon^2)$.
\end{proof}

The following result is a special case of Lemma~\ref{lemma:first_order_pert_subspace}, which coincides with~\cite[Theorem 2]{firstOrderPerturation} for $p=1$.

\begin{lemma}
\label{lemma:first_order_pert_cluster}
Under the notation and assumptions of Lemma~\ref{lemma:first_order_pert_subspace}, suppose that $A_{11} = \lambda I_p$, that is, 
$X_1, Y_1$ are bases for right/left eigenspaces of $A$.  
Then there exist bases $\widetilde X_1, \widetilde 
     Y_1 \in \mathbb C^{n\times p}$ for right/left invariant subspace of $\widetilde A = A+\epsilon E$ such that
    \begin{align}
        \widetilde X_1   & = X_1   - \epsilon X_2 (A_{22}-\lambda I_{n-p})^{-1} Y_2^*E X_1 + 
        \mathcal{O}\left(\epsilon^2\right),\label{eq:pert_cluster_x}\\
        \widetilde Y_1^* & = Y^*_1 - \epsilon Y_1^* E X_2 (A_{22}-\lambda I_{n-p})^{-1} Y_2^* +  \mathcal{O}\left(\epsilon^2\right), \label{eq:pert_cluster_y}
    \end{align}
    and $\widetilde Y_1^* \widetilde X_1 = I_p + \mathcal{O}(\epsilon^2)$.
\end{lemma}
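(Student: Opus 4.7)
The plan is to obtain the statement as a direct specialization of Lemma~\ref{lemma:first_order_pert_subspace} to the case $A_{11} = \lambda I_p$. The point is simply that the Sylvester operators appearing in that lemma collapse to multiplication by the shifted matrix $A_{22} - \lambda I_{n-p}$, so no new perturbation-theoretic work is needed.

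First I would verify the hypotheses of Lemma~\ref{lemma:first_order_pert_subspace}. The disjointness of the spectra of $A_{11}$ and $A_{22}$ reduces to the requirement that $\lambda$ is not an eigenvalue of $A_{22}$, which in turn guarantees that the Sylvester operator $\mathbb{T}: Z \mapsto A_{22}Z - Z A_{11} = (A_{22}-\lambda I_{n-p})Z$ is invertible, with inverse $\mathbb{T}^{-1}(W) = (A_{22}-\lambda I_{n-p})^{-1}W$. Substituting this into the expansion for $\widetilde X_1$ in Lemma~\ref{lemma:first_order_pert_subspace} immediately produces \eqref{eq:pert_cluster_x}.

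For $\widetilde Y_1^*$, the corresponding Sylvester operator $\mathbb{T}_*: Z \mapsto A_{22}^*Z - \bar\lambda Z$ is invertible for the same reason, with $\mathbb{T}_*^{-1}(W) = (A_{22}^*-\bar\lambda I_{n-p})^{-1}W$. The only step that requires a little care is rewriting the expression $\mathbb{T}_*^{-1}(X_2^*E^*Y_1)^* Y_2^*$ from Lemma~\ref{lemma:first_order_pert_subspace}: taking the conjugate transpose inside gives
\[
\mathbb{T}_*^{-1}(X_2^*E^*Y_1)^* = \big((A_{22}^*-\bar\lambda I_{n-p})^{-1}X_2^*E^*Y_1\big)^* = Y_1^* E X_2 (A_{22}-\lambda I_{n-p})^{-1},
\]
which after multiplying on the right by $Y_2^*$ yields \eqref{eq:pert_cluster_y}. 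The biorthogonality $\widetilde Y_1^* \widetilde X_1 = I_p + \mathcal{O}(\epsilon^2)$ is inherited directly from Lemma~\ref{lemma:first_order_pert_subspace}; alternatively, one can verify it in first order by expanding the product and observing that the cross terms vanish because $Y_1^*X_2 = 0$ and $Y_2^*X_1 = 0$.

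No step here is a genuine obstacle, since the heavy lifting was done in Lemma~\ref{lemma:first_order_pert_subspace}; the only mildly delicate point is the algebraic manipulation of the conjugate transpose for the left eigenspace expansion, which I would write out carefully to make sure the factor $(A_{22}-\lambda I_{n-p})^{-1}$ appears on the correct side.
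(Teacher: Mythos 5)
Your proposal is correct and follows exactly the same route as the paper: specialize Lemma~\ref{lemma:first_order_pert_subspace} by observing that the Sylvester operators $\mathbb{T}$ and $\mathbb{T}_*$ collapse to left-multiplication by $A_{22}-\lambda I_{n-p}$ and $A_{22}^*-\overline\lambda I_{n-p}$. You simply spell out the conjugate-transpose manipulation for $\widetilde Y_1^*$ in more detail than the paper's one-line proof, which is a reasonable thing to do.
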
 

\begin{proof}
     The result follows from Lemma~\ref{lemma:first_order_pert_subspace} by noting that the operators $\mathbb{T}$ and  $\mathbb{T}_*$ from Lemma~\ref{lemma:first_order_pert_subspace} simplify to $A_{22}-\lambda I_{n-p}$ and $A^*_{22}-\overline\lambda I_{n-p}$, respectively.
\end{proof}

\end{document}